\newtheorem{theorem}{Theorem}[section]
\newtheorem{maintheorem}[theorem]{Main Theorem}
\newtheorem{lemma}[theorem]{Lemma}
\newtheorem{proposition}[theorem]{Proposition}
\theoremstyle{definition}
\newtheorem{remark}[theorem]{Remark}
\newtheorem{observation}[theorem]{Observation}
\newtheorem{assumption}[theorem]{Assumption}
\newcommand{\IR}{\mathbb{R}}
\newcommand{\IC}{\mathbb{C}}
\newcommand{\IN}{\mathbb{N}}
\newcommand{\ID}{\mathbb{D}}
\newcommand{\Lop}{{\mathcal{L}}}
\newcommand{\Sec}{\mathrm{S}}
\newcommand{\T}{\mathcal{T}}
\newcommand{\R}{\mathcal{R}}
\renewcommand{\L}{\mathrm{L}}
\renewcommand{\H}{\mathrm{H}}
\newcommand{\C}{\mathrm{C}}
\newcommand{\W}{\mathrm{W}}
\renewcommand{\d}{\mathrm{d}}
\newcommand{\eps}{\varepsilon}
\newcommand{\abs}[1]{\lvert#1\rvert}
\DeclareMathOperator{\supp}{supp}
\DeclareMathOperator{\dist}{dist}
\DeclareMathOperator{\diam}{diam}
\renewcommand{\Re}{\operatorname{Re}}
\newcommand{\cE}{\mathcal{E}}
\newcommand{\cF}{\mathcal{F}}
\newcommand{\cM}{\mathcal{M}}
\newcommand{\dom}{\mathcal{D}}
\newcommand{\cC}{\mathcal{C}}
\newcommand{\fa}{\mathfrak{a}}
\numberwithin{equation}{section}
\title[$\R$-sectoriality of Higher-order elliptic systems on general bounded domains]{$\R$-sectoriality of higher-order elliptic systems on general bounded domains}
\author{Patrick Tolksdorf}
\address{Fachbereich Mathematik, Technische Universit\"at Darmstadt, Schlossgartenstr. 7, 64289 Darmstadt, Germany}
\email{tolksdorf@mathematik.tu-darmstadt.de}
\thanks{The author was supported by ``Studienstiftung des deutschen Volkes''.}
\date{\today}
\begin{document}
\begin{abstract}
On bounded domains $\Omega \subset \IR^d , d \geq 2$, reaching far beyond the scope of Lipschitz domains, we consider an elliptic system of order $2 m$ in divergence form with complex $\L^{\infty}$-coefficients complemented with homogeneous mixed Dirichlet/Neumann boundary conditions. We prove that the $\L^p$-realization of the corresponding operator $A$ is $\R$-sectorial of angle $\omega \in [0 , \frac{\pi}{2})$, where in the case $2m < d$, $p \in (\frac{2d}{d + 2 m} - \eps , \frac{2d}{d - 2 m} + \eps)$ for some $\eps > 0$, and where $p \in (1 , \infty)$ in the case $2m \geq d$. To perform this proof, we generalize the $\L^p$-extrapolation theorem of Shen to the Banach space valued setting and to arbitrary Lebesgue-measurable underlying sets.
\end{abstract}
\maketitle
%%%%%%%%%%%%%%%%%%%%%%%%%%%%%%%%%%%%%%%%%%%%%%%%%%%%%%%%%%%%%%%%%%%%%%%%%%%%%%%%%%%%%%%%%%%%%%%%%%%%%%%%%%%%%%%%%%%%%%%%%%%%%%%%%%%%%%%%%%%%%%%%%%%%%%%%%%%%%%%%%%%%

%%%%%%%%%%%%%%%%%%%%%%%%%%%%%%%%%%%%%%%%%%%%%%%%%%%%%%%%%%%%%%%%%%%%%%%%%%%%%%%%%%%%%%%%%%%%%%%%%%%%%%%%%%%%%%%%%%%%%%%%%%%%%%%%%%%%%%%%%%%%%%%%%%%%%%%%%%%%%%%%%%%%
\section{Introduction}
\label{Sec: Introduction}

\noindent The main object under consideration is an elliptic operator $A$ in divergence form of order $2m$ formally given by
\begin{align*}
 (A u)_i = (-1)^m \sum_{j = 1}^N \sum_{\abs{\alpha} , \abs{\beta} = m} \partial^{\alpha} [\mu_{\alpha \beta}^{i j} \partial^{\beta} u_j] \qquad (1 \leq i \leq N)
\end{align*}
on a bounded domain $\Omega \subset \IR^d , d \geq 2$. The coefficients $\mu_{\alpha \beta}^{i j}$ are supposed to be essentially bounded and complex valued; ellipticity is enforced by a G\r{a}rding type inequality. Each component of $u$ is supposed to satisfy mixed Dirichlet/Neumann boundary conditions on possibly different portions of the boundary. That is to say, on given closed subsets $D_i \subset \partial \Omega$ all derivatives of order less than $m - 1$ of the $i$th component of $u \in \dom(A)$ are assumed to vanish and on its complement relative to $\partial \Omega$ the $i$th component is assumed to satisfy homogeneous Neumann boundary conditions arising naturally by the definition of the operator. If all $D_i$ coincide a detailed introduction to higher-order elliptic operators and a discussion of these Neumann boundary conditions is included in Brewster, D.\@ Mitrea, I.\@ Mitrea, and M.\@ Mitrea~\cite[Sec.~7]{Brewster_Mitrea_Mitrea_Mitrea}. \par The given boundary conditions have an impact on the admissible geometric constellation of $\partial \Omega$, namely every point in $\overline{\partial \Omega \setminus [\cap_{i = 1}^N D_i]}$ is assumed to possess a bi-Lipschitzian coordinate chart. We record that the intersection of the sets $D_i$ is free from further assumptions and emphasize that the results of this article include the pure Dirichlet and Neumann cases, so that in the first case it suffices to assume the sole openness of $\Omega$ and in the second case that $\Omega$ is a Lipschitz domain. As usual, we interpret $A$ in a weak sense as a sectorial operator on $\L^2(\Omega ; \IC^N)$, i.e., its spectrum is contained in the closure of a sector $\Sec_{\omega} := \{z \in \IC \setminus \{0\} : \abs{\arg(z)} < \omega\}$ for some $\omega \in (0 , \frac{\pi}{2})$ and $\{ \lambda (\lambda + A)^{-1} \}_{\lambda \in \Sec_{\pi - \theta}}$ is bounded for all $\theta \in (\omega , \pi)$. \par
The easiest way to introduce $\R$-sectoriality for operators on $\L^p$-spaces may be the following. A linear operator $B$ on $\L^p$ is $\R$-sectorial of angle $\omega$ if $B$ is sectorial of angle $\omega$ and if for all $\theta \in (\omega , \pi)$ there exists a constant $C > 0$ such that for every $n_0 \in \IN$, all $(\lambda_n)_{n = 1}^{n_0} \subset \Sec_{\pi - \theta},$ and all $(f_n)_{n = 1}^{n_0} \subset \L^p$ the \textit{square function estimate}
\begin{align*}
 \Big\| \Big( \sum_{n = 1}^{n_0} \lvert \lambda_n (\lambda_n + B)^{-1} f_n \rvert^2 \Big)^{\frac{1}{2}} \Big\|_{\L^p} \leq C \Big\| \Big( \sum_{n = 1}^{n_0} \lvert f_n \rvert^2 \Big)^{\frac{1}{2}} \Big\|_{\L^p}
\end{align*}
holds true. It follows that on $\L^2$ the notions of sectoriality and $\R$-sectoriality coincide. Therefore, it is the task of this article to extrapolate $\R$-sectoriality of $A$ from $\L^2(\Omega ; \IC^N)$ to $\L^p(\Omega ; \IC^N)$ within the desired range of $p$'s. Due to the $\ell^2$-norm appearing in the square function estimate, this extrapolation requires a Banach space valued version of the $\L^p$-extrapolation theorem of Shen~\cite[Thm.~3.3]{Shen-Riesz}. Additionally, to meet the generality of the underlying domain, we present a proof of Shen's theorem in the Banach space valued setting and on general Lebesgue-measurable sets. Notice that in the smooth setting, $\R$-sectoriality of higher-order elliptic operators is treated in the monograph of Denk, Hieber, and Pr\"uss~\cite{Denk_Hieber_Pruess}. \par
The bridge between $\R$-sectorial operators and PDEs is built by the theorem of Weis~\cite[Thm.~4.2]{Weis}, which proves that $\R$-sectorial operators of angle less than $\pi / 2$ admit maximal parabolic $\L^q$-regularity. The latter notion is eminent in the treatment of nonlinear parabolic problems and was used in numerous occasions, see, e.g., Pr\"uss~\cite{Pruess}, Denk, Saal, and Seiler~\cite{Denk_Saal_Seiler}, or Haller-Dintelmann and Rehberg~\cite{Haller-Dintelmann_Rehberg}. \par
Recently, $\R$-sectoriality of second-order elliptic equations with real coefficients subject to mixed boundary conditions was established by Auscher, Badr, Haller-Dintelmann, and Rehberg in~\cite{Auscher_Badr_Haller-Dintelmann_Rehberg}. However, the authors deduce the $\R$-sectoriality directly via Gaussian estimates of the corresponding semigroup, so that this line of action does not work for systems of equations. A natural substitute of the Gaussian estimates for systems are off-diagonal estimates. Such an approach to the $\R$-sectoriality of second order systems is presented by Egert in~\cite{Egert_Systems}. This article presents a very short and direct proof of $\R$-sectoriality for higher-order elliptic systems with completely different techniques. Here, the only property of the PDE that is used is the $\L^2$-resolvent estimate and Caccioppoli's inequality for the resolvent equation. We emphasize that this inequality is verified in the second-order case in merely half a page, see, e.g., Shen~\cite[Lem.~2.1]{Shen-Elliptic}. \par
This article is organized as follows. In Section~\ref{Sec: Notations, assumptions and preliminary considerations} we give a precise formulation of all considered objects and notions, whereupon we will be able to state the main result in Section~\ref{Sec: The main result}. The proof of the main result will occupy the rest of this article. In Section~\ref{Sec: Notations, assumptions and preliminary considerations} we also observe that $\R$-sectoriality on $\L^p$-spaces is nothing else than the uniform boundedness of a certain family of operators on the vector valued $\L^p$-space $\L^p(\Omega ; \ell^2(\IC^N))$. The $\L^p$-extrapolation theorem of Shen will be generalized in Section~\ref{Sec: A Banach space valued Lp extrapolation theorem}. \par
Finally, in Section~\ref{Sec: Vector valued weak reverse Hoelder estimates} we are concerned with the proof of the vector valued version of the weak reverse H\"older estimates, which are required for the $\L^p$-extrapolation theorem. This is achieved by locally proving a $\IC^{n_0}$-valued Sobolev embedding with involved constant $C$ independent of $n_0$. The proof will be concluded by establishing Caccioppoli's inequality for functions $u$ that locally solve $\lambda u + A u = 0$. This argument heavily bases on Barton's prove of Caccioppoli's inequality in the higher-order case with no lower-order derivatives on the right-hand side except of the zeroth order term, see~\cite{Barton}.

\subsection*{Acknowledgements}
As this paper is part of my PhD-thesis I want to thank my advisor Robert Haller-Dintelmann for his guidance and help. Moreover, I want to thank Moritz Egert for pointing out the proof of Lemma~\ref{Lem: Reverse Hoelder for all balls}.

%%%%%%%%%%%%%%%%%%%%%%%%%%%%%%%%%%%%%%%%%%%%%%%%%%%%%%%%%%%%%%%%%%%%%%%%%%%%%%%%%%%%%%%%%%%%%%%%%%%%%%%%%%%%%%%%%%%%%%%%%%%%%%%%%%%%%%%%%%%%%%%%%%%%%%%%%%%%%%%%%%%%
\section{Notation, assumptions and preliminary considerations}
\label{Sec: Notations, assumptions and preliminary considerations}

Throughout this article the space dimension $d \geq 2$ is fixed. An open and connected subset of $\IR^d$ will be called a \textit{domain}. A ball with center $x$ and radius $r$ is denoted by $B(x , r)$, whereas a cube centered at $x$, with diameter $2r$, and faces parallel to the coordinate axes is denoted by $Q(x , r)$. For a positive constant $\alpha$ denote the dilated balls with same center by $\alpha B$. Integration will always be with respect to the Lebesgue measure and the Lebesgue measure of a measurable set $A$ will be denoted by $\abs{A}$. If $0 < \abs{A} < \infty$ and $f \in \L^1(A),$ denote the average of $f$ on $A$ by $(f)_A := \abs{A}^{-1} \int_A f \; \d x$. For multiindices we employ the common notation. \par
Banach spaces will always be over the complex field. The set of bounded linear operators on a Banach space $X$ is denoted by $\Lop(X)$. The domain of a linear operator $B$ is denoted by $\dom(B)$ and its spectrum by $\sigma(B)$. For $\omega \in [0 , \pi)$ define the sector $\Sec_{\omega}$ as $\{z \in \IC \setminus \{0\} : \abs{\arg(z)} < \omega \}$ if $\omega > 0$ and $\Sec_{\omega} := (0 , \infty)$ if $\omega = 0$. Mostly, we will make use of a generic constant $C > 0$.

\subsection{The geometric setup}

In this article we will assume that $\Omega$ is a bounded domain `admissible' for mixed boundary conditions, which is defined precisely in the following assumption.

\begin{assumption}
\label{Ass: Mixed boundary geometry}
The domain $\Omega \subset \IR^d$ is bounded and there exists a possibly empty, closed set $D \subset \partial \Omega$, such that for every point $x \in \overline{\partial \Omega \setminus D}$ there exists a bi-Lipschitz coordinate chart. More precisely, there exists a number $M \geq 1$ such that for every $x \in \overline{\partial \Omega \setminus D}$ there exists an open neighborhood $U_x \subset \IR^d$ and a bi-Lipschitz homeomorphism $\Phi_x : U_x \to (-1 , 1)^d$ with Lipschitz constants of $\Phi_x$ and $\Phi_x^{-1}$ being bounded by $M$ and fulfilling the mapping properties
\begin{align*}
 \Phi_x (x) &= 0 \\
 \Phi_x (\Omega \cap U_x) &= (-1 , 1)^{d - 1} \times (0 , 1) \\
 \Phi_x (\partial \Omega \cap U_x) &= (-1 , 1)^{d - 1} \times \{ 0 \}.
\end{align*}
\end{assumption}

\begin{remark}
\label{Rem: Mixed boundary geometry}
\begin{enumerate}
 \item For $y \in \partial \Omega \cap U_x$ with $\abs{x - y} \leq 1 / (2 M)$ and $0 < r \leq 1 / 4$ it is easy to see that $Q(\Phi_x (y) , r) \subset (-1 , 1)^d$ holds. Denote the bi-Lipschitzian counterpart of this cube by $U_{y , r} := \Phi_x^{-1} ((Q(\Phi_x (y) , r)))$ and denote its portion in $\Omega$ by $U_{y , r}^+ := U_{y , r} \cap \Omega$. Note that the bi-Lipschitz property of $\Phi_x$ implies that for $0 < s < t \leq 1$
\begin{align*}
(\sqrt{d}M)^{-1} (t - s) r \leq \dist(\partial U_{y , sr} , \partial U_{y , tr}) \leq \frac{M}{\sqrt{d}} (t - s) r
\end{align*}
holds. \label{Item: Lipschitz cylinders (1)}
 \item \label{Item: Lipschitz cylinders (2)}With $y$ and $r$ as in~\eqref{Item: Lipschitz cylinders (1)}, the bi-Lipschitzianity of $\Phi_x$ implies
\begin{align*}
 B(y , r / (M \sqrt{d})) \subset U_{y , r} \subset B(y , M r).
\end{align*}
\end{enumerate}
\end{remark}

For further reference, we record the following proposition dealing with local extensions at the Lipschitz boundary of $\Omega$. The proof of this proposition is an easy reflection argument and is omitted.

\begin{proposition}
\label{Prop: Local extension operators}
Let $\Omega$ be a domain subject to Assumption~\ref{Ass: Mixed boundary geometry}. Let $x \in \overline{\partial \Omega \setminus D}$, and $y$ and $r$ be as in Remark~\ref{Rem: Mixed boundary geometry}~(1). Then there exists a bounded extension operator $\cE_{y , r} : \L^1(U_{y , r}^+) \to \L^1(U_{y , r})$, i.e., $\cE_{y , r} u|_{U_{y , r}^+} = u$, which restricts for all $p \in [1 , \infty)$ to a bounded operator from $\L^p(U_{y , r}^+)$ into $\L^p(U_{y , r})$ and from $\W^{1 , p}(U_{y , r}^+)$ into $\W^{1 , p}(U_{y , r})$. Moreover, on $U_{y , r} \setminus U_{y , r}^+$ the function $\cE_{y , r} u$ is given by $u \circ \psi$, where $\psi := \Phi_x^{-1} \circ \mathfrak{R} \circ \Phi_x$ and $\mathfrak{R}$ is the reflection at the upper half-space boundary. Furthermore, $\psi$ is Lipschitz continuous on $U_{y , r} \setminus U_{y , r}^+$ with Lipschitz constant bounded by $M^2$.
\end{proposition}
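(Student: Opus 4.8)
The plan is to reduce everything to the flat half‑cube model via the chart $\Phi_x$, perform an even reflection across the hyperplane $\{x_d = 0\}$ there, and transport the result back. Write $\Phi := \Phi_x$, $Q := Q(\Phi(y),r)$ and $Q^{\pm} := Q \cap \{\pm x_d > 0\}$. Since $\Phi(y) \in (-1,1)^{d-1} \times \{0\}$, the cube $Q$ is symmetric under the reflection $\mathfrak{R}(x',x_d) = (x',-x_d)$, so $Q^-$ is the mirror image of $Q^+$ and $Q \setminus (Q^+ \cup Q^-)$ is Lebesgue null. Because $Q \subset (-1,1)^d$ we have $\Phi^{-1}(Q) \subset U_x$, and together with the mapping properties in Assumption~\ref{Ass: Mixed boundary geometry} this shows that $\Phi$ restricts to bi‑Lipschitz homeomorphisms $U_{y,r} \to Q$ and $U_{y,r}^+ = \Phi^{-1}(Q) \cap \Omega = \Phi^{-1}(Q^+) \to Q^+$, with all four Lipschitz constants bounded by $M$. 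I would then \emph{define} $\cE_{y,r}$ as the composition ``pull back by $\Phi$, even‑reflect on $Q$, push forward by $\Phi^{-1}$'', i.e. $\cE_{y,r} u := \bigl(E(u \circ (\Phi|_{U_{y,r}^+})^{-1})\bigr) \circ \Phi|_{U_{y,r}}$, where $E$ is the even reflection $(Ev)|_{Q^+} = v$, $(Ev)|_{Q^-} = v \circ \mathfrak{R}$.

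The two analytic ingredients are routine. On the model one has $\|Ev\|_{\L^p(Q)} = 2^{1/p} \|v\|_{\L^p(Q^+)}$ for every $p \in [1,\infty)$, and $E$ maps $\W^{1,p}(Q^+)$ boundedly into $\W^{1,p}(Q)$ with $\partial_i(Ev) = E(\partial_i v)$ for $i < d$ and $\partial_d(Ev)(x',x_d) = \sgn(x_d)\,(\partial_d v)(x',|x_d|)$; this is the classical even‑reflection lemma, whose only subtle point is that the one‑sided traces of $v$ and $v \circ \mathfrak{R}$ on $\{x_d = 0\}$ coincide, so that no surface term survives in the distributional $x_d$‑derivative. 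Transporting through $\Phi$ costs only constants: a bi‑Lipschitz map with constants $\le M$ enjoys Lusin's property $(\mathrm N)$, so pull‑back and push‑forward of $\L^p$‑functions are well defined, and the change‑of‑variables formula with the Jacobian bound $|\det D\Phi^{\pm1}| \le M^d$ gives $\|f \circ \Phi^{\pm1}\|_{\L^p} \le M^{d/p}\|f\|_{\L^p}$, while the chain rule for Sobolev functions composed with Lipschitz maps, $\nabla(f \circ \Phi^{\pm1}) = (D\Phi^{\pm1})^{\top}\,(\nabla f)\circ\Phi^{\pm1}$ a.e., yields the corresponding $\W^{1,p}$‑estimate with a constant $C(M,d,p)$. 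Composing the three bounded operators gives boundedness of $\cE_{y,r}$ from $\L^p(U_{y,r}^+)$ into $\L^p(U_{y,r})$ and from $\W^{1,p}(U_{y,r}^+)$ into $\W^{1,p}(U_{y,r})$ with a constant depending only on $M$, $d$ and $p$; the case $p = 1$ is exactly the asserted $\L^1$‑extension operator.

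Finally I would read off the two structural claims. Writing $\tilde u := u \circ (\Phi|_{U_{y,r}^+})^{-1}$, on $U_{y,r}^+$ we get $\cE_{y,r}u = (E\tilde u) \circ \Phi = \tilde u \circ \Phi = u$, so $\cE_{y,r}$ extends $u$. For $z \in U_{y,r} \setminus U_{y,r}^+$ we have $\Phi(z) \in Q^-$ up to a null set, hence $\cE_{y,r}u(z) = \tilde u(\mathfrak{R}(\Phi(z))) = u(\Phi^{-1}(\mathfrak{R}(\Phi(z)))) = u(\psi(z))$ with $\psi := \Phi^{-1} \circ \mathfrak{R} \circ \Phi$; since $\mathfrak{R}$ is an isometry of $\IR^d$, $\psi$ is Lipschitz with constant $\le M \cdot 1 \cdot M = M^2$. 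There is no genuine obstacle here: the only inputs beyond bookkeeping are the even‑reflection lemma and the stability of $\W^{1,p}$ under composition with bi‑Lipschitz maps, both entirely classical, which is why the statement is as elementary as the surrounding text asserts.
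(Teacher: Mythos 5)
Your proof is correct and follows exactly the route the paper has in mind: the paper omits the argument, describing it only as ``an easy reflection argument,'' and your construction (transport to the flat half-cube via $\Phi_x$, even reflection there, transport back, with the classical reflection lemma and the bi-Lipschitz change-of-variables/chain-rule estimates supplying the bounds) is precisely that argument, including the correct identification of $\cE_{y,r}u$ with $u\circ\psi$ on $U_{y,r}\setminus U_{y,r}^+$ and the bound $M^2$ for the Lipschitz constant of $\psi$.
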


\subsection{The spaces}

In this section we will give a brief introduction to the spaces we will be working with. For a Lebesgue-measurable set $\Xi \subset \IR^d$ and $1 \leq p \leq \infty , \L^p(\Xi ; \IC^N)$ are the usual $\IC^N$-valued Lebesgue spaces. If $X$ is a Banach space, denote the Bochner-Lebesgue spaces by $\L^p(\Xi ; X)$. \par
As we deal with mixed boundary conditions, we need Sobolev spaces that are adapted to these boundary conditions. For this purpose let $\Omega \subset \IR^d , d \geq 2$, be open and let $D \subset \partial \Omega$ be closed. Define for $p \in (1 , \infty)$ and $m \in \IN$ the space
\begin{align*}
 \W^{m , p}_{D} (\Omega) := \mathrm{closure}( \{ \varphi|_{\Omega} : \varphi \in \C_c^{\infty} (\IR^d) , \supp(\varphi) \cap D = \emptyset \} , \| \cdot \|_{\W^{m , p} (\Omega)} ).
\end{align*}
The set $D$ is usually referred to as the \textit{Dirichlet part}, because on this portion of the boundary functions and their derivatives up to order $m - 1$ are forced to vanish. A $\IC^N$-valued counterpart of this definition should reflect that it is natural to have different Dirichlet parts in different components of the $\IC^N$-valued function. Thus, for $D_1 , \dots , D_N \subset \partial \Omega$ closed define $\ID := \prod_{i = 1}^N D_i$ and define
\begin{align*}
 \W^{m , p}_{\ID} (\Omega ; \IC^N) := \prod_{i = 1}^N \W^{m , p}_{D_i} (\Omega)
\end{align*}
with the usual product norm
\begin{align*}
 \|u\|_{\W^{m , p}_{\ID} (\Omega ; \IC^N)} := \Big( \sum_{i = 1}^N \|u_i\|_{\W^{m , p}_{D_i} (\Omega)}^p \Big)^{\frac{1}{p}}.
\end{align*}

\begin{remark}
We have that $\W^{m , p}_{\partial \Omega} (\Omega) = \W^{m , p}_0 (\Omega)$ for any open set $\Omega$ and $\W^{m , p}_{\emptyset} (\Omega) = \W^{m , p} (\Omega)$ if there exists a bounded Sobolev extension operator from $\W^{m , p} (\Omega)$ to $\W^{m , p} (\IR^d)$. Note that the spaces $\W^{m , p}_D$ are systematically studied in \cite[Sec.~4]{Brewster_Mitrea_Mitrea_Mitrea} and if $m = 1$ in \cite{Haller-Dintelmann_Jonsson_Knees_Rehberg}.
\end{remark}

If $\Omega$ and $D$ are subject to Assumption~\ref{Ass: Mixed boundary geometry}, notice that for each $x \in \overline{\partial \Omega \setminus D}$ the sets $U_x \cap \Omega$ are $(\eps , \delta)$-domains in the sense of Jones~\cite{Jones}. This follows as the $(\eps , \delta)$-property is preserved under bi-Lipschitz homeomorphisms and since $(-1 , 1)^{d - 1} \times (0 , 1)$ is an $(\eps , \delta)$-domain. For a proof of this fact, we refer to Egert~\cite[Lem.~2.2.20]{Egert}. Using this covering property of the boundary strip near $\overline{\partial \Omega \setminus D}$ by $(\eps , \delta)$-domains, one can use a partition of unity in order to obtain Sobolev extension operators for the Sobolev spaces $\W^{m , p}_D (\Omega)$. Such a construction yielding semi-universal extension operators can be found in~\cite[Thm.~3.9]{Brewster_Mitrea_Mitrea_Mitrea}. More precisely, these authors prove that there exists a linear operator $\cE$ mapping locally integrable functions on $\Omega$ into Lebesgue measurable functions on $\IR^d$, which satisfies $\cE u|_{\Omega} = u$, which is bounded from $\L^p(\Omega)$ into $\L^p(\IR^d)$, and which is bounded from $\W^{k , p}_D (\Omega)$ into $\W^{k , p}_D (\IR^d)$ for all $1 \leq k \leq m$. Moreover, in the present situation the operator norms depend only on $d$, $p$, $M$, and $m$. Using this, we can prove the following proposition.

\begin{proposition}
\label{Prop: Equivalent norms}
Let $\Omega$ and $D$ be subject to Assumption~\ref{Ass: Mixed boundary geometry}. Then, for each $m \in \IN$ and all $p \in (1 , \infty)$ there exists a constant $C > 0$ depending only on $d$, $p$, $M$, and $m$ such that
\begin{align*}
 \| u \|_{\W^{m , p} (\Omega)} \leq C \big[ \| u \|_{\L^p(\Omega)}^p + \|\nabla^m u\|_{\L^p(\Omega)}^p \big]^{1 / p} \qquad (u \in \W^{m , p}_D (\Omega)).
\end{align*}
\end{proposition}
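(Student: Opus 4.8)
The plan is to reduce the claimed estimate to the mapping properties of the semi-universal extension operator $\cE$ furnished by \cite[Thm.~3.9]{Brewster_Mitrea_Mitrea_Mitrea}, together with the corresponding inequality on the whole space $\IR^d$. First I would fix $u \in \W_D^{m,p}(\Omega)$ and set $v := \cE u \in \W_D^{m,p}(\IR^d)$, so that $v|_{\Omega} = u$. By the continuity of $\cE$ from $\L^p(\Omega)$ into $\L^p(\IR^d)$ and from $\W_D^{k,p}(\Omega)$ into $\W_D^{k,p}(\IR^d)$ for each $1 \le k \le m$, with operator norms depending only on $d$, $p$, $M$, and $m$, it suffices to bound $\|v\|_{\W^{m,p}(\IR^d)}$ by a constant times $\big[\|v\|_{\L^p(\IR^d)}^p + \|\nabla^m v\|_{\L^p(\IR^d)}^p\big]^{1/p}$ and then to restrict back to $\Omega$, using $\|u\|_{\W^{m,p}(\Omega)} = \|v|_\Omega\|_{\W^{m,p}(\Omega)} \le \|v\|_{\W^{m,p}(\IR^d)}$ and $\|\nabla^m v\|_{\L^p(\IR^d)} \le \|\cE\|\,\|\nabla^m u\|_{\L^p(\Omega)} + (\text{possibly also }\|u\|_{\L^p(\Omega)})$, which is controlled by the right-hand side.

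The whole-space estimate is the classical Gagliardo--Nirenberg interpolation inequality: for $w \in \W^{m,p}(\IR^d)$ and $1 \le k < m$ one has $\|\nabla^k w\|_{\L^p(\IR^d)} \le C\,\|w\|_{\L^p(\IR^d)}^{1 - k/m}\,\|\nabla^m w\|_{\L^p(\IR^d)}^{k/m}$ with $C = C(d,p,m)$, whence by Young's inequality $\|\nabla^k w\|_{\L^p(\IR^d)} \le C\big(\|w\|_{\L^p(\IR^d)} + \|\nabla^m w\|_{\L^p(\IR^d)}\big)$; summing over $0 \le k \le m$ gives $\|w\|_{\W^{m,p}(\IR^d)} \le C\big[\|w\|_{\L^p(\IR^d)}^p + \|\nabla^m w\|_{\L^p(\IR^d)}^p\big]^{1/p}$ after trivially passing between the $\ell^1$- and $\ell^p$-sums of the finitely many seminorms. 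Applying this to $w = v = \cE u$ and tracking constants through the chain of inequalities yields the proposition with a constant depending only on $d$, $p$, $M$, and $m$.

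The only genuine subtlety—hardly an obstacle—is bookkeeping: one must be careful that $\cE$ is applied as a map into $\W_D^{k,p}(\IR^d)$ for \emph{each} $k \le m$ simultaneously with uniform norms, which is exactly what the cited construction guarantees, and that the final constant does not secretly depend on $\Omega$ beyond the parameter $M$ controlling the bi-Lipschitz charts. Since the $(\eps,\delta)$-property of the strips $U_x \cap \Omega$ near $\overline{\partial\Omega \setminus D}$ is preserved under bi-Lipschitz maps with constants bounded by $M$, and the partition-of-unity construction in \cite{Brewster_Mitrea_Mitrea_Mitrea} produces extension operators whose norms depend only on $d$, $p$, $M$, and $m$, this uniformity is in place, and no further geometric input is needed.
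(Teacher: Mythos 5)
Your reduction to the whole space hides a circularity at the decisive step. The extension operator $\cE$ from \cite[Thm.~3.9]{Brewster_Mitrea_Mitrea_Mitrea} is bounded from $\W^{m,p}_D(\Omega)$ to $\W^{m,p}_D(\IR^d)$ with respect to the \emph{full} Sobolev norms; it gives $\|\nabla^m \cE u\|_{\L^p(\IR^d)} \leq \|\cE u\|_{\W^{m,p}(\IR^d)} \leq C\,\|u\|_{\W^{m,p}(\Omega)}$, and nothing better. Your claim that $\|\nabla^m \cE u\|_{\L^p(\IR^d)} \leq \|\cE\|\,\|\nabla^m u\|_{\L^p(\Omega)} + (\text{possibly }\|u\|_{\L^p(\Omega)})$ is exactly a ``homogeneous'' estimate for $\cE$ that the cited theorem does not provide, and it is essentially equivalent to the proposition you are trying to prove: combined with Gagliardo--Nirenberg on $\IR^d$ it immediately yields the statement, so assuming it begs the question. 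If instead you only use the available bound, you end up with $\|u\|_{\W^{m,p}(\Omega)} \leq C'\big(\|u\|_{\L^p(\Omega)} + \|u\|_{\W^{m,p}(\Omega)}\big)$ with a constant $C' \geq 1$ coming from the operator norm of $\cE$, and no absorption is possible.

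The paper closes precisely this gap by a different mechanism: an induction on $m$ in which Gagliardo--Nirenberg on $\IR^d$ is applied to $\cE u$ with the split $\|\nabla^m \cE u\|_{\L^p} \leq C\,\|\cE u\|_{\L^p}^{1/(m+1)} \|\nabla^{m+1}\cE u\|_{\L^p}^{m/(m+1)}$, the second factor is bounded by $\|u\|_{\W^{m+1,p}(\Omega)}^{m/(m+1)}$ via $\cE$, and Young's inequality is then used with a \emph{small} parameter (chosen relative to the constant $K$ of the induction hypothesis) so that the resulting multiple of $\|u\|_{\W^{m+1,p}(\Omega)}^p$ can be absorbed into the left-hand side. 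That small-parameter absorption, together with the induction hypothesis to re-express the remaining intermediate term, is the missing idea in your argument. To repair your proposal you would either have to prove a homogeneous extension estimate for $\cE$ (which is not in the cited reference and would require separate work), or adopt an absorption scheme of the paper's type.
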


\begin{proof}
We perform an induction on $m$. Note that there is nothing to do in the case $m = 1$ so that we can directly perform the induction step. Thus, assume the validity of the statement above for a fixed number $m \in \IN$. Then there exists a constant $K \geq 1$ such that
\begin{align*}
 \| u \|_{\W^{m + 1 , p} (\Omega)} \leq K \big[ \| u \|_{\L^p (\Omega)}^p + \| \nabla^m u \|_{\L^p (\Omega)}^p + \| \nabla^{m + 1} u \|_{\L^p(\Omega)}^p \big]^{1 / p}.
\end{align*}
Next, use the Gagliardo--Nirenberg inequality~\cite[p.~125]{Nirenberg} to deduce
\begin{align*}
 \| \nabla^m u \|_{\L^p (\Omega)}^p \leq \| \nabla^m \cE u \|_{\L^p (\IR^d)}^p \leq C \| \cE u \|_{\L^p(\IR^d)}^{\frac{p}{m + 1}} \| \nabla^{m + 1} \cE u \|_{\L^p(\IR^d)}^{\frac{p m}{m + 1}}.
\end{align*}
Using the boundedness properties of $\cE$ discussed in the paragraph prior to the proposition shows
\begin{align*}
 \| \nabla^m u \|_{\L^p (\Omega)}^p &\leq C \| u \|_{\L^p(\Omega)}^{\frac{p}{m + 1}} \| u \|_{\W^{m + 1 , p} (\Omega)}^{\frac{p m}{m + 1}}.
\intertext{Employing Young's inequality yields}
 &\leq \frac{K^{p m} C^{m + 1}}{m + 1} \| u \|_{\L^p (\Omega)}^p + \frac{m}{K^p (m + 1)} \| u \|_{\W^{m + 1 , p} (\Omega)}^p.
\intertext{Finally, the induction hypothesis delivers}
 &\leq \frac{K^{p m} C^{m + 1} + m}{m + 1} \| u \|_{\L^p (\Omega)}^p + \frac{m}{m + 1} \| \nabla^m u \|_{\L^p(\Omega)}^p \\ &\quad+ \frac{m}{K^p (m + 1)} \| \nabla^{m + 1} u \|_{\L^p(\Omega)}^p.
\end{align*}
Absorbing the second summand on the right-hand side to the left-hand side concludes the induction step.
\end{proof}

\subsection{The operator}
\label{Subsec: The operator}

Let $N \in \IN$ denote the number of equations of the elliptic system, which itself is supposed to be of order $2m$ with $m \in \IN$. Fix $\Omega \subset \IR^d$ and closed sets $D_1 , \dots , D_N \subset \partial \Omega$ and define
\begin{align*}
 D:= \bigcap_{i = 1}^N D_i \quad \text{and} \quad \ID := \prod_{i = 1}^N D_i.
\end{align*}
Suppose that $\Omega$ and $D$ fulfill Assumption~\ref{Ass: Mixed boundary geometry}. For the coefficients $\mu_{\alpha \beta}^{i j}$ we make the following assumption.

\begin{assumption}
\label{Ass: Gardings inequality}
The coefficients $\mu_{\alpha \beta}^{i j} : \Omega \to \IC , 1 \leq i , j \leq N , \alpha , \beta \in \IN_0^d$ with $\abs{\alpha} = \abs{\beta} = m$ are Lebesgue measurable, bounded functions with bound $\Lambda > 0$ such that the sesquilinear form
\begin{align*}
 \mathfrak{a} : \W^{m , 2}_{\ID} (\Omega ; \IC^N) \times \W^{m , 2}_{\ID} (\Omega ; \IC^N), \quad (u , v) \mapsto \sum_{i , j = 1}^N \sum_{\abs{\alpha} , \abs{\beta} = m} \int_{\Omega} \mu_{\alpha \beta}^{i j} \partial^{\beta} u_j \overline{\partial^{\alpha} v_i} \; \d x
\end{align*}
is \textit{elliptic} in the sense that for some $\kappa > 0$ it satisfies the \textit{G\r{a}rding inequality}
\begin{align*}
 \Re(\mathfrak{a}(u , u)) \geq \kappa \sum_{i = 1}^N \sum_{\abs{\alpha} = m} \int_{\Omega} \abs{\partial^{\alpha} u_i}^2 \; \d x = \kappa \|\nabla^m u\|_{\L^2(\Omega ; \IC^{N d^m})}^2 \qquad (u \in \W^{m , 2}_{\ID} (\Omega ; \IC^N)).
\end{align*}
\end{assumption}

\begin{remark}
\label{Rem: Numerical range}
Under Assumption~\ref{Ass: Gardings inequality} the sesquilinear form $\mathfrak{a}$ is sectorial of an angle $\omega \in [0 , \frac{\pi}{2})$, i.e., the \textit{numerical range}
\begin{align*}
 \{ \mathfrak{a}(u , u) : u \in \W^{m , 2}_{\ID} (\Omega ; \IC^N) \}
\end{align*}
is contained in a sector $\overline{\Sec_{\omega}}$.
\end{remark}

Since $\mathfrak{a}$ is densely defined, sectorial, and closed (this follows by ellipticity of $\mathfrak{a}$ and Proposition~\ref{Prop: Equivalent norms}), it is known from classical form theory \cite[Thm.~VI.2.1]{Kato} that there exists a unique sectorial operator $A$ on $\L^2(\Omega ; \IC^N)$ of angle $\omega \in [0 , \frac{\pi}{2})$ such that $\dom(A) \subset \W^{m , 2}_{\ID} (\Omega ; \IC^N)$ and
\begin{align*}
 \mathfrak{a} (u , v) = (A u , v)_{\L^2} \qquad (u \in \dom(A) , v \in \W^{m , 2}_{\ID}(\Omega ; \IC^N)),
\end{align*}
where $(\cdot , \cdot)_{\L^2}$ denotes the $\L^2$ inner product. Here, we say that a closed linear operator $B : \dom(B) \subset X \to X$ is \textit{sectorial of angle $\omega \in [0 , \pi)$} if $\sigma(B) \subset \overline{\Sec_{\omega}}$ and if for every $\theta \in (\omega , \pi]$ there exists a constant $C > 0$ with
\begin{align*}
 \|\lambda (\lambda + B)^{-1}\|_{\Lop(X)} \leq C \qquad (\lambda \in \Sec_{\pi - \theta}).
\end{align*}
\indent If $B$ is a linear operator on $\L^2(\Xi ; \IC^N)$ on a bounded measurable set $\Xi \subset \IR^d$ and $p > 2$ define the $\L^p$-realization of $B_p$ as the \textit{part} of $B$ in $\L^p$, i.e.,
\begin{align*}
 \dom(B_p) &:= \{u \in \dom(B) \cap \L^p(\Xi ; \IC^N) : B u \in \L^p(\Xi ; \IC^N)\}, \\
 B_p u &:= B u \qquad (u \in \dom(B_p)).
\end{align*}
If $p^{\prime} < 2$ define $B_{p^{\prime}}$ as the closure of $B$ in $\L^{p^{\prime}}(\Xi ; \IC^N)$, if it exists. We record the following lemma, which connects $B_{p^{\prime}}$ and $(B^*)_p$ for $1 / p + 1 / p^{\prime} = 1$.

\begin{lemma}
\label{Lem: Lp realization}
Let $\Xi \subset \IR^d$ be a bounded domain, $p \in (2 , \infty)$, $1 / p + 1 / p^{\prime} = 1,$ and let $B$ be a densely defined operator on $\L^2(\Xi ; \IC^N)$. Then $\dom(B)$ is dense in $\L^{p^{\prime}}(\Xi ; \IC^N)$ and $B$ is closable in $\L^{p^{\prime}}(\Xi ; \IC^N)$ if and only if the part of $B^*$ in $\L^p (\Xi ; \IC^N)$ is densely defined. In this case the identity $(B_{p^{\prime}})^* = (B^*)_p$ holds true.
\end{lemma}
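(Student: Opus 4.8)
The plan is to establish the three assertions of the lemma in a natural order: first the density of $\dom(B)$ in $\L^{p'}(\Xi;\IC^N)$, then the equivalence between closability of $B$ in $\L^{p'}$ and dense definedness of $(B^*)_p$ in $\L^p$, and finally the operator identity $(B_{p'})^* = (B^*)_p$ once both sides make sense. Since $\Xi$ is bounded, the continuous inclusions $\L^2(\Xi;\IC^N) \hookrightarrow \L^{p'}(\Xi;\IC^N)$ for $p' \le 2$ are at our disposal, as is the fact that $\C_c^\infty(\Xi;\IC^N)$ (or even simple functions) are dense in every $\L^q(\Xi;\IC^N)$, $q \in [1,\infty)$. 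For the first assertion I would argue that $\dom(B)$ is dense in $\L^2(\Xi;\IC^N)$ by hypothesis, that $\L^2$ is dense in $\L^{p'}$ because $\Xi$ has finite measure, and that the $\L^2$-norm dominates the $\L^{p'}$-norm up to a constant depending on $\abs{\Xi}$; composing these density statements gives density of $\dom(B)$ in $\L^{p'}(\Xi;\IC^N)$.

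For the closability part, I would use the standard duality characterisation: a densely defined operator $T$ on a reflexive Banach space $Y$ with values in $Y$ is closable if and only if its adjoint $T^*$ (defined on the dual $Y'$) is densely defined — but here one must be careful because $B$ is only densely defined on $\L^2$, so I would instead work with the graph. Concretely, $B$ viewed as an operator in $\L^{p'}$ has a well-defined (possibly non-closed) graph $\Gamma \subset \L^{p'} \times \L^{p'}$, and $B$ is closable in $\L^{p'}$ precisely when the closure $\overline{\Gamma}$ is still a graph, i.e. contains no element of the form $(0,g)$ with $g \ne 0$. I would identify the annihilator of $\overline{\Gamma}$ in $\L^{p} \times \L^{p}$ (using $(\L^{p'})' = \L^p$ since $p' < 2 < \infty$) with the set of pairs $(\phi,\psi)$ such that $\int_\Xi (Bu)\overline{\psi} + \int_\Xi u \overline{\phi} = 0$ for all $u \in \dom(B)$; this is exactly the condition $\psi \in \dom(B^*)$ with $B^*\psi = -\phi$, where moreover $\psi \in \L^p$. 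Thus the annihilator is (a graph-flip of) the graph of the part of $B^*$ in $\L^p$. The closure $\overline{\Gamma}$ is a graph if and only if this annihilator is weak-$*$ dense enough to separate points of the form $(0,g)$, which by a Hahn–Banach argument is equivalent to the first components $\psi$ ranging over a dense subset of $\L^p$ — that is, to $(B^*)_p$ being densely defined.

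For the final identity $(B_{p'})^* = (B^*)_p$, assuming both operators are densely defined, I would simply unwind the definition of the Banach-space adjoint of $B_{p'} : \dom(B_{p'}) \subset \L^{p'} \to \L^{p'}$: an element $\psi \in \L^p$ lies in $\dom((B_{p'})^*)$ with $(B_{p'})^*\psi = \phi$ iff $\int_\Xi (B_{p'}u)\overline{\psi} = \int_\Xi u\overline{\phi}$ for all $u \in \dom(B_{p'})$. On the one hand this is implied by $\psi \in \dom((B^*)_p)$, since $\dom(B_{p'})$ contains $\dom(B) \cap \L^{p'} \cap B^{-1}(\L^{p'})$ and one uses the $\L^2$-pairing identity $\fa$-style $(Bu,v)_{\L^2} = (u,B^*v)_{\L^2}$ extended by density. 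The reverse inclusion — that every $\psi$ testing correctly against all of $\dom(B_{p'})$ already lies in $\dom(B^*)$ with $B^*\psi \in \L^p$ — is the delicate point and, I expect, the main obstacle: one must show $\dom(B_{p'})$ is large enough (dense in $\L^{p'}$, or at least norming) to recover the $\L^2$-adjoint relation, which is where the density statement proved first, together with the observation that $\dom(B) \cap \L^p \supseteq \dom(B)$ because $\Xi$ is bounded and $\dom(B) \subset \L^2 \subset \L^{p'}$, gets used; in fact since $p' < 2$ every element of $\dom(B)$ automatically lies in $\L^{p'}$, so $\dom(B_{p'}) \supseteq \dom(B)$ and the approximation of $B_{p'}$ by $B$ is transparent. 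The one genuine subtlety is matching the \emph{ranges}: $B_{p'}u = Bu$ need not lie in $\L^p$, so care is needed that the pairing $\int_\Xi (B_{p'}u)\overline{\psi}$ is finite for $\psi \in \L^p$, which follows from $Bu \in \L^{p'}$ paired with $\L^p$ by Hölder. Collecting these observations yields the claimed identity.
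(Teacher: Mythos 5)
Your proposal is essentially correct, but the heart of it runs along a different track than the paper. For the equivalence ``$B$ closable in $\L^{p^{\prime}}$ $\Leftrightarrow$ $(B^*)_p$ densely defined'' you pass to the graph $\Gamma \subset \L^{p^{\prime}} \times \L^{p^{\prime}}$, identify its annihilator in $\L^p \times \L^p$ with the (flipped, sign-changed) graph of $(B^*)_p$ --- the identification is legitimate because on a bounded $\Xi$ the $\L^{p^{\prime}}$--$\L^p$ pairing agrees with the $\L^2$ inner product on the relevant intersections --- and then read off both implications from the single biorthogonality statement $(0,g) \in \overline{\Gamma} \Leftrightarrow g \perp \dom((B^*)_p)$, using reflexivity and Hahn--Banach. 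The paper instead argues by hand: it proves $\dom((B^*)_p) = \dom((B_{p^{\prime}})^*)$ via approximating sequences in the graph norm, quotes Schechter's theorem that the adjoint of a closed densely defined operator on a reflexive space is densely defined to get one direction, and runs a short explicit sequence argument ($u_n \to 0$, $Bu_n \to f$, test against $\dom((B^*)_p)$) for the converse. Your annihilator argument buys a more symmetric, citation-free proof of the equivalence; the paper's route has the advantage that the identity $(B_{p^{\prime}})^* = (B^*)_p$ drops out of the same computation, whereas you treat it separately. Two small points in your write-up: the parenthetical ``$\dom(B) \cap \L^p \supseteq \dom(B)$'' is false as written (elements of $\dom(B)$ need not lie in $\L^p$) and should just read $\dom(B) \subset \L^2 \subset \L^{p^{\prime}}$, hence $\dom(B) \subset \dom(B_{p^{\prime}})$; and the inclusion $\dom((B_{p^{\prime}})^*) \subset \dom((B^*)_p)$, which you flag as the delicate main obstacle requiring $\dom(B_{p^{\prime}})$ to be dense or norming, is in fact immediate: one tests only against $u \in \dom(B) \subset \dom(B_{p^{\prime}})$, notes that $w$ and $(B_{p^{\prime}})^* w$ lie in $\L^p \subset \L^2$ by boundedness of $\Xi$, and reads off $w \in \dom(B^*)$ with $B^* w = (B_{p^{\prime}})^* w \in \L^p$ directly from the definition of the $\L^2$-adjoint; no further largeness of $\dom(B_{p^{\prime}})$ is needed.
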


\begin{proof}
First, assume there exists $f \in \L^p(\Xi ; \IC^N)$ such that
\begin{align*}
 \int_{\Xi} u \cdot \overline{f} \; \d x = 0 \qquad (u \in \dom(B)).
\end{align*}
By the boundedness of $\Xi$, we find $f \in \L^2(\Xi ; \IC^N)$, and by the density of $\dom(B)$ in $\L^2(\Xi ; \IC^N)$, it follows that $f$ must be zero. Consequently, $\dom(B)$ is dense in $\L^{p^{\prime}} (\Xi ; \IC^N)$. \par
Let $B$ be closable in $\L^{p^{\prime}} (\Xi ; \IC^N)$. Because $\dom(B) \subset \dom(B_{p^{\prime}})$ the closure of $B$ is densely defined and by definition of its domain, for each $u \in \dom(B_{p^{\prime}})$ there exists a sequence $(u_n)_{n \in \IN} \subset \dom(B)$ with $u_n \to u$ in $\L^{p^{\prime}} (\Xi ; \IC^N)$ and $B u_n \to B_{p^{\prime}} u$ in $\L^{p^{\prime}} (\Xi ; \IC^N)$. Thus, for $v \in \dom((B^*)_p)$, we find
\begin{align*}
 \langle B_{p^{\prime}} u , v \rangle_{\L^{p^{\prime}} , \L^p} = \lim_{n \to \infty} ( u_n , B^* v )_{\L^2} = \langle u , (B^*)_p v \rangle_{\L^{p^{\prime}} , \L^p},
\end{align*}
where $\langle \cdot , \cdot \rangle_{\L^{p^{\prime}} , \L^p}$ denotes the duality product between $\L^{p^{\prime}} (\Xi ; \IC^N)$ and $\L^p (\Xi ; \IC^N)$. We derive the inclusion $\dom((B^*)_p) \subset \dom((B_{p^{\prime}})^*)$ and equality of the operators on $\dom((B^*)_p)$. If $w \in \dom((B_{p^{\prime}})^*)$, we find for $u \in \dom(B) \subset \dom(B_{p^{\prime}})$
\begin{align*}
 ( B u , w )_{\L^2} = \langle u , (B_{p^{\prime}})^* w \rangle_{\L^{p^{\prime}} , \L^p} = ( u , (B_{p^{\prime}})^* w )_{\L^2}.
\end{align*}
Consequently, $w \in \dom(B^*) \cap \L^p(\Xi ; \IC^N)$ and $B^* w = (B_{p^{\prime}})^* w \in \L^p(\Xi ; \IC^N)$ so that $w \in \dom((B^*)_p)$. This proves $\dom((B^*)_p) = \dom((B_{p^{\prime}})^*)$, so that the part of $B^*$ in $\L^p (\Xi ; \IC^N)$ is densely defined by Schechter~\cite[Thm.~7.20 \& Lem.~7.21]{Schechter}. \par
Now assume that $(B^*)_p$ is densely defined and let $(u_n)_{n \in \IN} \subset \dom(B)$ be a sequence with $u_n \to 0$ in $\L^{p^{\prime}} (\Xi ; \IC^N)$ and such that $(B u_n)_{n \in \IN}$ is a Cauchy sequence in $\L^{p^{\prime}} (\Xi ; \IC^N)$ with limit $f$. Then
\begin{align*}
 \langle f , v \rangle_{\L^{p^{\prime}} , \L^p} = \lim_{n \to \infty} \langle u_n , (B^*)_p v \rangle_{\L^{p^{\prime}} , \L^p} = 0 \qquad (v \in \dom((B^*)_p)).
\end{align*}
Hence, $f$ is zero by density of $\dom((B^*)_p)$ in $\L^p(\Xi ; \IC^N)$ so that $B$ is closable in $\L^{p^{\prime}} (\Xi ; \IC^N)$.
\end{proof}

\subsection{$\R$-sectoriality}
\label{Subsec: R-sectoriality}

For a Lebesgue-measurable set $\Xi \subset \IR^d , p \in [1 , \infty),$ and $N \in \IN$, a linear operator $B : \dom(B) \subset \L^p(\Xi ; \IC^N) \to \L^p(\Xi ; \IC^N)$ is called $\R$-sectorial of angle $\omega$ if $B$ is sectorial of angle $\omega$ and if additionally for all $\theta \in (\omega , \pi]$ there exists a constant $C > 0$ such that for all $n_0 \in \IN , (\lambda_n)_{n = 1}^{n_0} \subset \Sec_{\pi - \theta}$, and $(f_n)_{n = 1}^{n_0} \subset \L^p(\Xi ; \IC^N)$ the \textit{square function estimate}
\begin{align}
 \label{Eq: Characterization square function estimate}
\Big\| \Big[ \sum_{n = 1}^{n_0} \abs{\lambda_n (\lambda_n + B)^{-1} f_n}^2 \Big]^{\frac{1}{2}} \Big\|_{\L^p(\Xi)} \leq C \Big\| \Big[ \sum_{n = 1}^{n_0} \abs{f_n}^2 \Big]^{\frac{1}{2}} \Big\|_{\L^p(\Xi)}
\end{align}
holds true.

\begin{remark}
For the general notion of $\R$-sectoriality for operators acting on general Banach spaces beyond the scope of $\L^p$, see, e.g., Denk, Hieber, and Pr\"uss~\cite[Sec.~4.1]{Denk_Hieber_Pruess}. It is a matter of fact, that on $\L^p$-spaces the definition of $\R$-sectoriality via square function estimates is equivalent to the one given in~\cite[Sec.~4.1]{Denk_Hieber_Pruess}, see Kunstmann and Weis~\cite[Rem.~2.9]{Kunstmann-Weis}.
\end{remark}

Recall that sectorial operators on reflexive Banach spaces are always densely defined, see Haase~\cite[Prop.~2.1.1 h)]{Haase}. Thus, an application of Kalton and Weis~\cite[Lem.~3.1]{Kalton_Weis} delivers the following dualization principle.

\begin{observation}
\label{Ob: Duality}
If $p \in (1 , \infty)$, then $B$ is $\R$-sectorial on $\L^p(\Xi ; \IC^N)$ if and only if its adjoint $B^*$ is $\R$-sectorial on $\L^{p^{\prime}} (\Xi ; \IC^N)$, where $1 / p + 1 / p^{\prime} = 1$.
\end{observation}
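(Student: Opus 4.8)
The plan is to deduce the statement from the known duality of $\R$-boundedness for families of bounded operators on $\L^p$-spaces. First I would record that, since $\L^p(\Xi ; \IC^N)$ with $p \in (1 , \infty)$ is reflexive, sectoriality of $B$ forces $\dom(B)$ to be dense (Haase~\cite[Prop.~2.1.1 h)]{Haase}), so that the adjoint $B^*$ on $\L^{p^{\prime}}(\Xi ; \IC^N)$ is a well-defined, densely defined, closed operator. The duality between $\L^p$ and $\L^{p^{\prime}}$ then gives $\sigma(B^*) = \{ \overline{\lambda} : \lambda \in \sigma(B) \}$ and, for $\lambda \in \Sec_{\pi - \theta}$ with $\theta \in (\omega , \pi]$, the resolvent identity $(\lambda + B^*)^{-1} = \big( (\overline{\lambda} + B)^{-1} \big)^*$. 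Since $\overline{\Sec_{\omega}}$ and $\Sec_{\pi - \theta}$ are invariant under complex conjugation and $\|T\|_{\Lop(\L^p)} = \|T^*\|_{\Lop(\L^{p^{\prime}})}$ for bounded $T$, this shows at once that $B$ is sectorial of angle $\omega$ if and only if $B^*$ is sectorial of angle $\omega$.

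Next I would treat the square function estimate~\eqref{Eq: Characterization square function estimate}. By the remark following~\eqref{Eq: Characterization square function estimate} (Kunstmann and Weis~\cite[Rem.~2.9]{Kunstmann-Weis} together with Khintchine's inequality), for an operator already sectorial of angle $\omega$, $\R$-sectoriality is equivalent to $\R$-boundedness in $\Lop(\L^p(\Xi ; \IC^N))$ of the resolvent family $\T_{\theta} := \{ \lambda(\lambda + B)^{-1} : \lambda \in \Sec_{\pi - \theta} \}$ for every $\theta \in (\omega , \pi]$. The computation from the first step shows that the family of adjoints is precisely $\T_{\theta}^* = \{ \mu(\mu + B^*)^{-1} : \mu \in \Sec_{\pi - \theta} \}$, because $\lambda \mapsto \overline{\lambda}$ is a bijection of $\Sec_{\pi - \theta}$ onto itself. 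Since $\L^p(\Xi ; \IC^N)$ and $\L^{p^{\prime}}(\Xi ; \IC^N)$ both have finite cotype, applying the duality lemma of Kalton and Weis~\cite[Lem.~3.1]{Kalton_Weis} in both directions (and using reflexivity) shows that $\T_{\theta}$ is $\R$-bounded on $\L^p(\Xi ; \IC^N)$ if and only if $\T_{\theta}^*$ is $\R$-bounded on $\L^{p^{\prime}}(\Xi ; \IC^N)$. Combining this with the sectoriality equivalence from the first step yields the asserted equivalence.

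The only points that need care are the identification $(\lambda + B^*)^{-1} = \big( (\overline{\lambda} + B)^{-1} \big)^*$ for a densely defined closed operator with bounded resolvent — a standard fact, but one must apply the conjugation convention for the adjoint consistently so that passing to $B^*$ only reflects, and does not rotate, the sector — and the observation that, on the concrete spaces $\L^p(\Xi ; \IC^N)$, the square function estimate in the definition of $\R$-sectoriality is exactly the square-function form of $\R$-boundedness to which~\cite[Lem.~3.1]{Kalton_Weis} applies. I do not expect a genuine obstacle here: the essential content is already contained in the Kalton--Weis duality lemma, and the remainder is bookkeeping with adjoints on a reflexive space.
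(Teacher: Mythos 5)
Your proposal is correct and follows essentially the same route as the paper, which obtains the observation exactly from reflexivity (dense definedness via Haase, Prop.~2.1.1~h)) combined with the duality lemma of Kalton and Weis (Lem.~3.1); your adjoint-resolvent and sector-conjugation bookkeeping is precisely what that citation encapsulates. The only minor imprecision is that the hypothesis for dualizing $\R$-bounds is nontrivial type (K-convexity) rather than finite cotype, but $\L^p(\Xi ; \IC^N)$ with $p \in (1 , \infty)$ has it, so nothing breaks.
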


A closer look on the square function estimate reveals that~\eqref{Eq: Characterization square function estimate} is nothing else than the uniform boundedness estimate of the operators
\begin{align*}
 (T_{\lambda_1} , \dots , T_{\lambda_{n_0}} , 0 , \dots) : \L^p(\Xi ; \ell^2(\IC^N)) &\to \L^p(\Xi ; \ell^2(\IC^N)), \\
 (f_n)_{n \in \IN} &\mapsto (T_{\lambda_1} f_1 , \dots , T_{\lambda_{n_0}} f_{n_0} , 0 , \dots),
\end{align*}
where $T_{\lambda} := \lambda (\lambda + B)^{-1}$ and $\ell^2 (\IC^N)$ denotes the Banach space of square summable $\IC^N$-valued sequences. Thus, the following proposition is evident.

\begin{proposition}
\label{Prop: Reinterpretation of R-boundedness}
A linear operator $B$ on $\L^p(\Xi ; \IC^N)$ is $\R$-sectorial of angle $\omega$ if and only if $\sigma(B) \subset \overline{\Sec_{\omega}}$ and if for every $\theta \in (\omega , \pi]$ the family
\begin{align}
\label{Eq: Operator family to be extended}
 \{ (T_{\lambda_1} , \dots , T_{\lambda_{n_0}} , 0 , \dots) : n_0 \in \IN , (\lambda_n)_{n = 1}^{n_0} \subset \Sec_{\pi - \theta} \}
\end{align}
is bounded in $\Lop(\L^p(\Xi ; \ell^2(\IC^N)))$, where $T_{\lambda} := \lambda (\lambda + B)^{-1}$.
\end{proposition}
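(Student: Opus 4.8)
This proposition is a reformulation, so the plan is to unwind the two definitions involved and check that they match term by term. First I would recall that an element of $\L^p(\Xi ; \ell^2(\IC^N))$ is a sequence $(g_n)_{n \in \IN}$ of $\IC^N$-valued measurable functions with
\[
 \|(g_n)_{n \in \IN}\|_{\L^p(\Xi ; \ell^2(\IC^N))} = \Big\| \Big[ \sum_{n = 1}^{\infty} \abs{g_n}^2 \Big]^{\frac{1}{2}} \Big\|_{\L^p(\Xi)},
\]
where $\abs{\cdot}$ denotes the Euclidean norm on $\IC^N$. Evaluating the operator $(T_{\lambda_1} , \dots , T_{\lambda_{n_0}} , 0 , \dots)$ on an arbitrary $(f_n)_{n \in \IN}$ then shows that the result depends only on $f_1 , \dots , f_{n_0}$ and has norm $\big\| \big[ \sum_{n = 1}^{n_0} \abs{T_{\lambda_n} f_n}^2 \big]^{1/2} \big\|_{\L^p(\Xi)}$.

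Next I would compute the operator norm of $(T_{\lambda_1} , \dots , T_{\lambda_{n_0}} , 0 , \dots)$ on $\L^p(\Xi ; \ell^2(\IC^N))$. Given any input $(f_n)_{n \in \IN}$ of norm at most one, replacing it by its truncation $(f_1 , \dots , f_{n_0} , 0 , \dots)$ leaves the output unchanged and does not increase the input norm; hence the supremum defining the operator norm is attained on sequences supported in $\{1 , \dots , n_0\}$, for which the norm equals $\big\| \big[ \sum_{n = 1}^{n_0} \abs{f_n}^2 \big]^{1/2} \big\|_{\L^p(\Xi)}$. Consequently
\[
 \|(T_{\lambda_1} , \dots , T_{\lambda_{n_0}} , 0 , \dots)\|_{\Lop(\L^p(\Xi ; \ell^2(\IC^N)))} = \sup \Big\| \Big[ \sum_{n = 1}^{n_0} \abs{T_{\lambda_n} f_n}^2 \Big]^{\frac{1}{2}} \Big\|_{\L^p(\Xi)},
\]
the supremum running over all $(f_n)_{n = 1}^{n_0} \subset \L^p(\Xi ; \IC^N)$ with $\big\| \big[ \sum_{n = 1}^{n_0} \abs{f_n}^2 \big]^{1/2} \big\|_{\L^p(\Xi)} \leq 1$. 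Therefore, for a fixed $\theta$, boundedness of the family in~\eqref{Eq: Operator family to be extended} by a constant $C$ is exactly equivalent to the validity of the square function estimate~\eqref{Eq: Characterization square function estimate} with the same constant $C$ for all $n_0 \in \IN$ and all $(\lambda_n)_{n = 1}^{n_0} \subset \Sec_{\pi - \theta}$.

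Finally I would reconcile this with the definition of $\R$-sectoriality of angle $\omega$, which in addition to the square function estimate demands that $B$ be sectorial of angle $\omega$. The spectral inclusion $\sigma(B) \subset \overline{\Sec_{\omega}}$ is required on both sides, and the remaining part of sectoriality --- the uniform bound $\|\lambda (\lambda + B)^{-1}\|_{\Lop(\L^p(\Xi ; \IC^N))} \leq C$ for $\lambda \in \Sec_{\pi - \theta}$ --- is the special case $n_0 = 1$ of either formulation, since then the identity above reduces to $\|(T_{\lambda_1} , 0 , \dots)\|_{\Lop(\L^p(\Xi ; \ell^2(\IC^N)))} = \|T_{\lambda_1}\|_{\Lop(\L^p(\Xi ; \IC^N))}$. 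Combining these observations yields the stated equivalence. There is no genuine difficulty here; the only step needing an explicit word of justification is that passing from a general input sequence to its first $n_0$ coordinates does not lower the operator norm, so that the \emph{worst} test sequences are precisely those entering the square function estimate.
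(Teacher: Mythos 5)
Your argument is correct and is precisely the identification the paper relies on: the paper states the proposition as evident after observing that the square function estimate is the uniform boundedness of the operators $(T_{\lambda_1}, \dots, T_{\lambda_{n_0}}, 0, \dots)$ on $\L^p(\Xi ; \ell^2(\IC^N))$, and your truncation argument together with the $n_0 = 1$ case supplying the resolvent bound just makes that identification explicit.
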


%%%%%%%%%%%%%%%%%%%%%%%%%%%%%%%%%%%%%%%%%%%%%%%%%%%%%%%%%%%%%%%%%%%%%%%%%%%%%%%%%%%%%%%%%%%%%%%%%%%%%%%%%%%%%%%%%%%%%%%%%%%%%%%%%%%%%%%%%%%%%%%%%%%%%%%%%%%%%%%%%%%%
\section{The main result}
\label{Sec: The main result}

\noindent We are now in the position to state our main result. Again $\omega \in [0 , \frac{\pi}{2})$ denotes the angle of sectoriality of $A$ on $\L^2(\Omega ; \IC^N)$, see Subsection~\ref{Subsec: The operator}.

\begin{maintheorem}
\label{Thm: Maintheorem}
Let $N , m \in \IN$, $\Omega \subset \IR^d$ be a bounded domain, and $D_1 , \dots , D_N \subset \partial \Omega$ be closed sets. Define
\begin{align*}
 D := \bigcap_{i = 1}^N D_i
\end{align*}
and assume that $\Omega$ and $D$ fulfill Assumption~\ref{Ass: Mixed boundary geometry}. Let the coefficients $\mu_{\alpha \beta}^{i j}$, $1 \leq i , j \leq N$, $\abs{\alpha} = \abs{\beta} = m$ be subject to Assumption~\ref{Ass: Gardings inequality}, and let $A$ be the elliptic operator of order $2m$ as defined in Subsection~\ref{Subsec: The operator}. \par
Then for each $\theta \in [\omega , \pi]$ there exists $\eps \geq 0$ with $\eps > 0$ if $\theta \in (\omega , \pi]$, depending only on $d$, $M$, $m$, $N$, $\kappa$, $\theta$, $\omega$, and $\Lambda$, such that the following statement is valid. \par
If $2m < d$, then for all numbers $p$ satisfying
\begin{align*}
 \frac{2d}{d + 2m} - \eps < p < \frac{2d}{d - 2m} + \eps
\end{align*}
or, if $2m \geq d$, then for all $p \in (1 , \infty)$, the $\L^p$-realization of $A$ is closed and densely defined. Moreover, $A_p$ is sectorial of angle $\theta$ and for every $\theta^{\prime} \in (\theta , \pi]$ the family $\{ \lambda (\lambda + A_p)^{-1} \}_{\lambda \in \Sec_{\pi - \theta^{\prime}}}$ is $\R$-bounded in $\Lop(\L^p(\Omega ; \IC^N))$. 
\end{maintheorem}

\begin{remark}
\label{Rem: Reduction to p > 2}
To prove this theorem, we can reduce matters to the case $p > 2$. Indeed, note that Assumption~\ref{Ass: Gardings inequality} on the coefficients is stable under the operation $\mu_{\alpha \beta}^{i j} \mapsto \overline{\mu_{\beta \alpha}^{j i}}$ so that if the theorem is proven under this assumption for the $\L^p$-realization of $A$ and $p > 2$, it then is also proven for the $\L^p$-realization of $A^*$. For the situation of $p < 2$ one can argue by duality using Observation~\ref{Ob: Duality} and Lemma~\ref{Lem: Lp realization}.
\end{remark}

By sectoriality of $A$ on $\L^2(\Omega ; \IC^N)$ the set defined in~\eqref{Eq: Operator family to be extended} is bounded in $\Lop(\L^2(\Omega ; \ell^2(\IC^N)))$. Thus, with Remark~\ref{Rem: Reduction to p > 2} in mind, it is desirable to provide a tool to extrapolate bounded operators on $\L^2(\Omega ; \ell^2(\IC^N))$ to $\L^p(\Omega ; \ell^2(\IC^N))$ for $p > 2$. This is what we do in the following section.

%%%%%%%%%%%%%%%%%%%%%%%%%%%%%%%%%%%%%%%%%%%%%%%%%%%%%%%%%%%%%%%%%%%%%%%%%%%%%%%%%%%%%%%%%%%%%%%%%%%%%%%%%%%%%%%%%%%%%%%%%%%%%%%%%%%%%%%%%%%%%%%%%%%%%%%%%%%%%%%%%%%%
\section{A Banach space valued $\L^p$-extrapolation theorem}
\label{Sec: A Banach space valued Lp extrapolation theorem}

\noindent The following theorem generalizes the $\L^p$-extrapolation theorem of Shen~\cite[Thm.~3.3]{Shen-Riesz} in two directions. The first is that the extrapolation theorem remains valid in the Banach space valued setting, which is, in view of Proposition~\ref{Prop: Reinterpretation of R-boundedness}, important for $\R$-boundedness. The second is, that it proves the extrapolation theorem far beyond the scope of bounded Lipschitz domains as it was established in~\cite{Shen-Riesz}; here, the requirement is the sole measurability of the underlying domain.

\begin{theorem}
\label{Thm: Extrapolation of square function estimates}
Let $X$ and $Y$ be Banach spaces, $\Omega \subset \IR^d$ be Lebesgue-measurable, $\cM > 0$, and let $T \in \Lop(\L^2(\Omega ; X) , \L^2(\Omega ; Y))$ with $\| T \|_{\Lop(\L^2(\Omega ; X) , \L^2(\Omega ; Y))} \leq \cM$. \par
Suppose that there exist constants $p > 2$, $R_0 > 0$, $\alpha_2 > \alpha_1 > 1$, and $\cC > 0$, where $R_0 = \infty$ if $\diam(\Omega) = \infty$, such that the following holds. For all $B = B(x_0 , r)$ with $0 < r < R_0$, which are either centered on $\partial \Omega$, i.e., $x_0 \in \partial \Omega$, or satisfy $\alpha_2 B \subset \Omega$, and all compactly supported $f \in \L^{\infty}(\Omega ; X)$ with $f = 0$ on $\Omega \cap \alpha_2 B$ the estimate
\begin{align}
\label{Eq: Weak reverse Hoelder inequality}
 \begin{aligned}
 \bigg( \frac{1}{r^d} \int_{\Omega \cap B} \| T f \|_Y^p \; \d x \bigg)^{\frac{1}{p}} &\leq \cC \bigg\{ \bigg( \frac{1}{r^d} \int_{\Omega \cap \alpha_1 B} \| T f \|_Y^2 \; \d x \bigg)^{\frac{1}{2}} \\
 &\qquad+ \sup_{B^{\prime} \supset B} \bigg( \frac{1}{\lvert B^{\prime} \rvert} \int_{\Omega \cap B^{\prime}} \| f \|_X^2 \; \d x \bigg)^{\frac{1}{2}} \bigg\}
 \end{aligned}
\end{align}
holds. Here the supremum runs over all balls $B^{\prime}$ containing $B$. \par
Then for each $2 < q < p$ the restriction of $T$ onto $\L^2(\Omega ; X) \cap \L^q(\Omega ; X)$ extends to a bounded linear operator from $\L^q (\Omega ; X)$ into $\L^q(\Omega ; Y)$, with operator norm bounded by a constant depending on $d$, $p$, $q$, $\alpha_1$, $\alpha_2$, $\cC$, and $\cM$, and additionally on $R_0$ and $\diam(\Omega)$ if $\Omega$ is bounded.
\end{theorem}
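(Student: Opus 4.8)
### Proof Strategy

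\textbf{Overview.} The plan is to follow the Calderón--Zygmund-type ``good-$\lambda$'' philosophy underlying Shen's argument, but phrased so that the target and source Banach spaces $X, Y$ enter only through their norms $\|\cdot\|_X, \|\cdot\|_Y$, which is all that is used on the measure-theoretic side. Concretely, we want to show that the sublinear functional $g \mapsto \|Tg\|_Y$ is bounded on $\L^q$ for $2 < q < p$, using as inputs only (i) the $\L^2 \to \L^2$ bound $\cM$ and (ii) the weak reverse Hölder inequality \eqref{Eq: Weak reverse Hoelder inequality}. The key device is a real-variable lemma (Shen's ``$\L^p$-boundedness criterion'', see~\cite[Thm.~3.2]{Shen-Riesz}) which converts local reverse Hölder control of $Tf$ away from the support of $f$ into global $\L^q$-boundedness; this lemma is itself purely measure-theoretic, so it transfers verbatim once one checks that every scalar quantity appearing in it is replaced by the appropriate $X$- or $Y$-norm.

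\textbf{Main steps.} First I would state and prove the abstract real-variable criterion on a Lebesgue-measurable $\Omega \subset \IR^d$: if $F \in \L^2(\Omega)$ is nonnegative and for every admissible ball $B$ one can decompose $F \le F_B + R_B$ on $\Omega \cap B$ with $(\,\fint_{\Omega \cap B} F_B^p)^{1/p} \le \cC\{(\fint_{\Omega \cap \alpha_1 B} F^2)^{1/2} + \text{(a maximal term)}\}$ and $(\fint_{\Omega \cap B} R_B^2)^{1/2} \le \cC\,\text{(maximal term)}$, then $\|F\|_{\L^q(\Omega)} \lesssim \|\text{maximal term}\|_{\L^q(\Omega)}$ for $2 < q < p$. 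The proof of this is the standard one: a Vitali/Besicovitch covering argument feeding a good-$\lambda$ inequality between the distribution functions of the Hardy--Littlewood maximal function of $F^2$ and of the data term, then integrating in $\lambda$; the restriction to balls with $0 < r < R_0$ and the substitute $R_0 = \infty$ clause handle the cases $\diam(\Omega)$ finite or infinite, and the ``centered on $\partial\Omega$ or $\alpha_2 B \subset \Omega$'' dichotomy is exactly what one needs so that every ball in the covering is admissible. Second, I would apply this with $F := \|Tg\|_Y$ for $g \in \L^2(\Omega;X) \cap \L^q(\Omega;X)$ with (say) $g$ compactly supported and bounded, a dense class. Fix an admissible ball $B = B(x_0, r)$ and split $g = g\,\ind_{\Omega \cap \alpha_2 B} + g\,\ind_{\Omega \setminus \alpha_2 B} =: g_1 + g_2$. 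For the local piece set $R_B := \|T g_1\|_Y$; by the $\L^2$-boundedness of $T$,
\begin{align*}
 \Big(\frac{1}{|B|}\int_{\Omega \cap B} \|T g_1\|_Y^2 \,\d x\Big)^{1/2} \le \frac{\cM}{|B|^{1/2}} \|g_1\|_{\L^2(\Omega;X)} \le C\,\cM \Big(\frac{1}{|\alpha_2 B|}\int_{\Omega \cap \alpha_2 B} \|g\|_X^2 \,\d x\Big)^{1/2},
\end{align*}
which is controlled by the maximal term (take $B' = \alpha_2 B \supset B$). For the far piece $g_2$, which vanishes on $\Omega \cap \alpha_2 B$, hypothesis \eqref{Eq: Weak reverse Hoelder inequality} applies directly with $f := g_2$, giving $(\fint_{\Omega \cap B}\|T g_2\|_Y^p)^{1/p} \le \cC\{(\fint_{\Omega\cap\alpha_1 B}\|Tg_2\|_Y^2)^{1/2} + \text{maximal term}\}$; bounding $\|T g_2\|_Y \le \|Tg\|_Y + \|T g_1\|_Y = F + R_B$ inside the $\L^2$-average on $\alpha_1 B$ and absorbing the $R_B$ contribution as above, we obtain the required estimate for $F_B := \|T g_2\|_Y$. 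Then the real-variable criterion yields $\|Tg\|_{\L^q(\Omega;Y)} \lesssim \|\, \mathcal{M}(\|g\|_X^2)^{1/2}\|_{\L^q(\Omega)} \lesssim \|g\|_{\L^q(\Omega;X)}$ by the Hardy--Littlewood maximal theorem (here $q > 2$ ensures $q/2 > 1$ so the maximal operator is bounded on $\L^{q/2}$). Third, a density argument extends $T$ from the dense class to all of $\L^q(\Omega;X)$ with the stated dependence of the norm on $d, p, q, \alpha_1, \alpha_2, \cC, \cM$ (and on $R_0, \diam(\Omega)$ when $\Omega$ is bounded, because then the covering argument must also cover balls of radius comparable to $\diam(\Omega)$, which enters the constant).

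\textbf{Main obstacle.} The genuinely delicate point is making the covering/good-$\lambda$ argument work on an \emph{arbitrary} Lebesgue-measurable $\Omega$ with no regularity whatsoever: one cannot use a doubling property of $\Omega$ or of surface measure, and the maximal function that appears must be the \emph{unrestricted} (ambient $\IR^d$) Hardy--Littlewood maximal function, which is why the hypothesis carries the supremum over all balls $B' \supset B$ rather than just $B$ itself. One must be careful that the balls produced by the Vitali covering of a level set $\{F > \lambda\}$ are all admissible — i.e.\ either boundary-centered or satisfying $\alpha_2 B \subset \Omega$ — which forces the stopping-time construction to be run relative to $\Omega$ and the boundary strip, and that the passage from the local reverse Hölder estimate to the good-$\lambda$ inequality does not secretly use $|\Omega \cap B| \gtrsim |B|$. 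Handling the Banach-space valuedness, by contrast, is essentially cosmetic once this scalar skeleton is in place: every occurrence of an absolute value becomes $\|\cdot\|_X$ or $\|\cdot\|_Y$, the triangle inequality replaces $|a+b| \le |a| + |b|$, and linearity of $T$ is used exactly where Shen uses it.
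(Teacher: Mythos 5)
Your second step---splitting $g = g\ind_{\Omega\cap\alpha_2 B} + g\ind_{\Omega\setminus\alpha_2 B}$, controlling the near part through the $\L^2$-bound $\cM$ and the far part through \eqref{Eq: Weak reverse Hoelder inequality}, and feeding the decomposition into a good-$\lambda$/maximal-function criterion---is the classical way Shen's criterion is applied, and that part is correct. The genuine gap is in your first step, which you state but do not prove: the real-variable criterion itself on an \emph{arbitrary} Lebesgue-measurable $\Omega$, with the decomposition hypothesis available only for admissible balls (boundary-centered, or $\alpha_2 B\subset\Omega$) of radius less than $R_0$. The covering/stopping-time construction in any good-$\lambda$ argument produces balls with arbitrary centers and radii, and your proposed remedy---running the stopping time ``relative to $\Omega$ and the boundary strip''---is precisely where the total absence of regularity bites (no measure density $\abs{B\cap\Omega}\gtrsim \abs{B}$, no doubling along $\partial\Omega$); you identify this as the main obstacle but leave it unresolved, and it is not clear the stopping time can be constrained to admissible balls at all. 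Similarly, when $\Omega$ is bounded and $R_0<\infty$ you must also treat balls of radius comparable to $\diam(\Omega)$, which you only mention in passing; this needs an argument, not just a larger constant.

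The paper avoids re-deriving the good-$\lambda$ machinery on a rough set altogether: it passes to $T_{\IR^d}=\mathcal{E}_0 T\mathcal{R}_\Omega$ as in \eqref{Eq: TRd} and upgrades the hypothesis \eqref{Eq: Weak reverse Hoelder inequality} to \emph{all} balls of $\IR^d$, after which the Banach-space-valued whole-space theorem of Shen (absolute values replaced by norms, as observed by Auscher) applies verbatim. The upgrade uses two elementary covering lemmas that need no regularity of $\Omega$: Lemma~\ref{Lem: Reverse Hoelder for all balls} covers an arbitrary ball $B$ meeting $\Omega$ by a bounded number of admissible balls (each point of $\Omega\cap B$ is either at distance at least $cr$ from $\partial\Omega$, yielding an interior admissible ball, or within $cr$ of a boundary point, yielding a boundary-centered one; Vitali controls the overlap), and Lemma~\ref{Lem: Bridge from small balls to large balls} removes the restriction $r<R_0$, the very large balls being trivial because $\Omega\subset\alpha_2 B$ forces $T_{\IR^d}f=0$ for the admissible $f$. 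If you wish to keep your direct route, you would have to insert precisely these two covering arguments---that is, first prove the reverse H\"older estimate for all balls intersecting $\Omega$ and all relevant radii---rather than attempting to force the stopping-time balls to be admissible.
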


The proof of the theorem above is roughly as follows. First, we will take the Banach space valued case with $\Omega = \IR^d$ as granted. The validity of this case was already observed by Auscher in the remark below~\cite[Thm.~1.2]{Auscher} and follows directly from Shen's proof of the whole space case~\cite[Thm.~3.1]{Shen-Riesz} by replacing the absolute value signs by the respective Banach space norms. As Shen's proof is presented very clearly, we omit further details of the Banach space valued whole space case. \par
In the case, where $\Omega$ is not the whole space, one can reduce matters to the whole space case by considering the operator
\begin{align}
\label{Eq: TRd}
 T_{\IR^d} f := \mathcal{E}_0 T \mathcal{R}_{\Omega} f \qquad (f \in \L^2(\IR^d ; X)),
\end{align}
where $\mathcal{R}_{\Omega}$ restricts functions from $\IR^d$ to $\Omega$ and $\mathcal{E}_0$ extends functions from $\Omega$ to $\IR^d$ by zero. Obviously, if the restriction of $T$ onto $\L^2(\Omega ; X) \cap \L^q(\Omega ; X)$ extends to a bounded linear operator from $\L^q (\Omega ; X)$ into $\L^q(\Omega ; Y)$ the same is valid for $T_{\IR^d}$ (with $\Omega$ replaced by $\IR^d$). On the other hand, if the restriction of $T_{\IR^d}$ onto $\L^2(\IR^d ; X) \cap \L^q(\IR^d ; X)$ extends to a bounded operator from $\L^q(\IR^d ; X)$ into $\L^q(\IR^d ; Y)$, then the same is valid for $T$ as well (with $\IR^d$ replaced by $\Omega$). This is true as $T$ can be written as
\begin{align*}
 T f = \R_{\Omega} T_{\IR^d} \mathcal{E}_0 f \qquad (f \in \L^2(\Omega ; X)).
\end{align*}
Consequently, we are left with proving the boundedness of $T_{\IR^d}$. Comparing the assumptions of the cases $\Omega = \IR^d$ and $\Omega \neq \IR^d$ in Theorem~\ref{Thm: Extrapolation of square function estimates} we see that $T$ verifies~\eqref{Eq: Weak reverse Hoelder inequality} only for balls that are either centered on the boundary or lie completely inside $\Omega$ (with some safety distance). However, one has to verify~\eqref{Eq: Weak reverse Hoelder inequality} for $T_{\IR^d}$ for all balls in $\IR^d$. This bridge is built by the following two lemmas. The purpose of the first lemma is to show that~\eqref{Eq: Weak reverse Hoelder inequality} is even valid for all balls that have a non-trivial intersection with $\Omega$ but whose radius is still restricted by the number $R_0$ of Theorem~\ref{Thm: Extrapolation of square function estimates}. The purpose of the second lemma is to show that one can replace the number $R_0$ by an arbitrary other number $R_0^{\prime}$ (if $R_0$ is finite).

\begin{lemma}
\label{Lem: Reverse Hoelder for all balls}
Let $\Omega \subset \IR^d$ be Lebesgue-measurable, $f , g \in \L^2(\Omega)$, $\alpha_2 > \alpha_1 > 1$, $p > 2$, and $r > 0$ and $x_0 \in \IR^d$ be such that $B(x_0 , r) \cap \Omega \neq \emptyset$. If there exists $C > 0$ such that
\begin{align*}
 \bigg( \frac{1}{r^d} \int_{\Omega \cap \widetilde{B}} \abs{f}^p \; \d x \bigg)^{\frac{1}{p}} \leq C \bigg\{ \bigg( \frac{1}{r^d} \int_{\Omega \cap \alpha_1 \widetilde{B}} \abs{f}^2 \; \d x \bigg)^{\frac{1}{2}} + \sup_{B^{\prime} \supset \widetilde{B}} \bigg( \frac{1}{\lvert B^{\prime} \rvert} \int_{\Omega \cap B^{\prime}} \abs{g}^2 \; \d x \bigg)^{\frac{1}{2}} \bigg\}
\end{align*}
holds for all balls $\widetilde{B}$ with $\alpha_2 \widetilde{B} \subset B(x_0 , \alpha_2 r)$ and which are either centered on $\partial \Omega$ or satisfy $\alpha_2 \widetilde{B} \subset \Omega$, then, for each $\alpha \in (1 , \alpha_2)$ there exists a constant $C^{\prime}$ such that
\begin{align*}
 \bigg( \frac{1}{r^d} \int_{\Omega \cap B(x_0 , r)} \abs{f}^p \; \d x \bigg)^{\frac{1}{p}} \leq C^{\prime} \bigg\{ \bigg( \frac{1}{r^d} \int_{\Omega \cap B(x_0 , \alpha r)} \abs{f}^2 \; \d x \bigg)^{\frac{1}{2}} + \sup_{B^{\prime} \supset B(x_0 , r)} \bigg( \frac{1}{\lvert B^{\prime} \rvert} \int_{\Omega \cap B^{\prime}} \abs{g}^2 \; \d x \bigg)^{\frac{1}{2}} \bigg\},
\end{align*}
where $C^{\prime}$ depends on $d$, $\alpha$, $\alpha_1$, $\alpha_2$, $p$, and $C$.
\end{lemma}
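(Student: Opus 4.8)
The plan is to cover the ball $B(x_0,r)\cap\Omega$ by a controlled number of small balls, each of which is of the type for which the hypothesis applies, and then sum up the resulting local estimates. First I would fix $\alpha\in(1,\alpha_2)$ and choose a small radius parameter $\delta>0$, depending only on $d,\alpha,\alpha_2$, with the property that any ball $\widetilde B=B(y,\delta r)$ with $y\in B(x_0,r)\cap\Omega$ automatically satisfies $\alpha_2\widetilde B\subset B(x_0,\alpha r)$ (hence in particular $\alpha_2\widetilde B\subset B(x_0,\alpha_2 r)$, the requirement in the hypothesis). For this it suffices to take $\delta$ with $(1+\alpha_2\delta)\le\alpha$, e.g.\ $\delta:=(\alpha-1)/\alpha_2$. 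With this $\delta$ fixed, I would split $B(x_0,r)\cap\Omega$ into the part consisting of points $y$ with $\dist(y,\partial\Omega)\ge\alpha_2\delta r$, which can be covered by balls $B(y,\delta r)$ satisfying $\alpha_2 B(y,\delta r)\subset\Omega$, and the part consisting of points with $\dist(y,\partial\Omega)<\alpha_2\delta r$, each of which lies within distance $\alpha_2\delta r$ of some boundary point $z\in\partial\Omega$, so that $B(y,\delta r)\subset B(z,2\alpha_2\delta r)$; after rescaling the radius slightly (replacing $\delta$ by $\delta/2$ at the outset if one wants clean constants) these boundary balls are centered on $\partial\Omega$ and of the admissible type.

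Next I would invoke a Vitali-type covering / Besicovitch argument: from the cover of $\overline{B(x_0,r)\cap\Omega}$ by balls of radius comparable to $\delta r$ one extracts a finite subcover $\{\widetilde B_k\}_{k=1}^{K}$ with $K\le C(d,\alpha,\alpha_2)$ and with bounded overlap of the dilates $\alpha_1\widetilde B_k$. For each $k$ the hypothesis gives
\begin{align*}
 \int_{\Omega\cap\widetilde B_k}\abs{f}^p\,\d x \le C^p (\delta r)^d\bigg\{\bigg(\frac{1}{(\delta r)^d}\int_{\Omega\cap\alpha_1\widetilde B_k}\abs{f}^2\,\d x\bigg)^{1/2}+\sup_{B'\supset\widetilde B_k}\bigg(\frac{1}{\abs{B'}}\int_{\Omega\cap B'}\abs{g}^2\,\d x\bigg)^{1/2}\bigg\}^p,
\end{align*}
and summing over $k$, using $\sum_k\int_{\Omega\cap\widetilde B_k}\abs{f}^p\ge\int_{\Omega\cap B(x_0,r)}\abs{f}^p$ on the left and the bounded overlap together with $\alpha_1\widetilde B_k\subset B(x_0,\alpha r)$ on the right, yields the claim; here one also uses that every ball $B'\supset\widetilde B_k$ with $\widetilde B_k\subset B(x_0,r)$ can be enlarged to a ball $B''\supset B(x_0,r)$ of comparable radius, so the supremum over $B'\supset\widetilde B_k$ is dominated by a constant times the supremum over $B''\supset B(x_0,r)$ — this is where one must be slightly careful, since a priori $\sup_{B'\supset\widetilde B_k}$ could be larger than $\sup_{B''\supset B(x_0,r)}$, but enlarging any $B'$ that contains the small ball $\widetilde B_k$ (of radius $\ge\delta r/2$, say) to also contain $B(x_0,r)$ costs only a factor bounded in terms of $d,\delta$. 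Finally I would absorb the factors $\delta^{d/p}$, $\delta^{-d/2}$, $K$ and the overlap constant into a single $C'=C'(d,\alpha,\alpha_1,\alpha_2,p,C)$, and if desired pass from the $\ell^p$-sum of the two terms to $\{(\cdots)^{1/2}+(\cdots)^{1/2}\}$ on the right via the elementary inequality $(a+b)^p\le 2^{p-1}(a^p+b^p)$ in the reverse direction.

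The main obstacle, and the only genuinely non-routine point, is the handling of the supremum term $\sup_{B'\supset\widetilde B}(\abs{B'}^{-1}\int_{\Omega\cap B'}\abs{g}^2)^{1/2}$: one must verify that localizing to the small balls $\widetilde B_k$ does not blow up this quantity relative to the corresponding quantity for $B(x_0,r)$. This is resolved by the observation above that any admissible $B'\supset\widetilde B_k$ has radius at least $\delta r/2$, hence can be replaced by a concentric dilate $B''$ of radius at most $C(d,\delta)$ times that of $B'$ with $B(x_0,r)\subset B''$, and $\abs{B'}^{-1}\int_{\Omega\cap B'}\abs{g}^2\le C(d,\delta)\,\abs{B''}^{-1}\int_{\Omega\cap B''}\abs{g}^2$. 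Everything else — the covering, the overlap count, the constant bookkeeping — is standard and dimension-dependent only.
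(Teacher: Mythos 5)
Your proposal is correct and takes essentially the same route as the paper's proof: cover $\Omega\cap B(x_0,r)$ by admissible small balls (interior balls whose $\alpha_2$-dilate stays in $\Omega$, plus boundary-centered balls for points near $\partial\Omega$), extract a subfamily of bounded cardinality by a Vitali-type argument, apply the hypothesis on each ball, and control the supremum term by enlarging any competitor ball $B'\supset\widetilde B_k$ by a bounded factor so that it contains $B(x_0,r)$ — which is precisely the paper's dilation trick with the factor $\beta=(2+c)/(5c)$. The remaining points (choice of the small radius, covering count, normalization $\delta^{\pm d}$) are routine constant bookkeeping, as you indicate.
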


\begin{proof}
Define
\begin{align*}
c := \min\bigg\{ \frac{\alpha_2 - 1}{5 \alpha_2 + 1} , \frac{\alpha - 1}{5 \alpha_1 + 1} \bigg\}
\end{align*}
and
\begin{align*}
 \mathcal{I}_1 &:= \{y \in \Omega \cap B(x_0 , r) : B(y , c r) \subset \Omega \}, \\
 \mathcal{I}_2 &:= \{y^{\prime} \in \partial \Omega : \text{there is } y \in \Omega \cap B(x_0 , r) \text{ such that } y^{\prime} \in B(y , c r ) \}.
\end{align*}
Note that for all $y \in \mathcal{I} := \mathcal{I}_1 \cup \mathcal{I}_2$ we have $B(y , 5 c \alpha_1 r) \subset B(x_0 , \alpha r)$ and $B(y , 5 c \alpha_2 r) \subset B(x_0 , \alpha_2 r)$ by definition of $c$. Moreover, by construction
\begin{align*}
 \Omega \cap B(x_0 , r) \subset \Omega \cap \bigcup_{y \in \mathcal{I}} B(y , c r) .
\end{align*}
The covering lemma of Vitali, see Evans and Gariepy~\cite[Thm.~1.5.1]{Evans_Gariepy}, yields an at most countable index set $\mathcal{F}$ such that all balls in the family $\{B(y , c r)\}_{y \in \mathcal{F}}$ are pairwise disjoint and such that
\begin{align*}
 \bigcup_{y \in \mathcal{I}} B(y , c r) \subset \bigcup_{y \in \mathcal{F}} B(y , 5 c r).
\end{align*}
Furthermore, for $\sharp(\cF)$ being the number of points in $\cF$, we have
\begin{align*}
 \abs{B(0 , 1)} c^d r^d \sharp(\cF) = \sum_{y \in \mathcal{F}} \lvert B(y , c r) \rvert \leq \lvert B(x_0 , \alpha r) \rvert = \abs{B(0 , 1)} \alpha^d r^d,
\end{align*}
that is $\sharp(\cF) \leq (\alpha / c)^d$. This yields by hypothesis
\begin{align*}
 \frac{1}{r^d} \int_{\Omega \cap B(x_0 , r)} \lvert f \rvert^p \; \d x &\leq \frac{1}{r^d} \sum_{y \in \mathcal{F}} \int_{\Omega \cap B(y , 5cr)} \lvert f \rvert^p \; \d x \\
 &\leq C^p \sum_{y \in \mathcal{F}} \bigg\{ \bigg( \frac{1}{r^d} \int_{\Omega \cap B(y , 5 \alpha_1 c r)} \lvert f \rvert^2 \; \d x \bigg)^{\frac{1}{2}} \\
 &\qquad+ \sup_{B^{\prime} \supset B(y , 5 c r)} \bigg( \frac{1}{\lvert B^{\prime} \rvert} \int_{\Omega \cap B^{\prime}} \lvert g \rvert^2 \; \d x \bigg)^{\frac{1}{2}} \bigg\}^p.
 \intertext{Define $\beta := (2 + c)/(5 c)$ and note that $\beta \geq 1$. Using this together with $B(y , 5 c \alpha_1 r) \subset B(x_0 , \alpha r)$ delivers}
 &\leq \beta^{\frac{dp}{2}} C^p \sum_{y \in \mathcal{F}} \bigg\{ \bigg( \frac{1}{r^d} \int_{\Omega \cap B(x_0 , \alpha r)} \lvert f \rvert^2 \; \d x \bigg)^{\frac{1}{2}} \\
 &\qquad+ \sup_{\beta B^{\prime} \supset B(y , 5 c \beta r)} \bigg( \frac{1}{\lvert \beta B^{\prime} \rvert} \int_{\Omega \cap \beta B^{\prime}} \lvert g \rvert^2 \; \d x \bigg)^{\frac{1}{2}} \bigg\}^p.
\intertext{Finally, the choice of $\beta$ ensures $B(x_0 , r) \subset B(y , 5 c \beta r)$ for all $y \in \mathcal{F}$. Thus, the supremum becomes larger if we replace $\beta B^{\prime}$ by arbitrary balls that contain $B(x_0 , r)$. This implies}
 &\leq \beta^{\frac{dp}{2}} C^p \sharp(\mathcal{F}) \bigg\{ \bigg( \frac{1}{r^d} \int_{\Omega \cap B(x_0 , \alpha r)} \lvert f \rvert^2 \; \d x \bigg)^{\frac{1}{2}} \\
 &\qquad+ \sup_{B^{\prime} \supset B(x_0 , r)} \bigg( \frac{1}{\lvert B^{\prime} \rvert} \int_{\Omega \cap B^{\prime}} \lvert g \rvert^2 \; \d x \bigg)^{\frac{1}{2}} \bigg\}^p
\end{align*}
and concludes the proof.
\end{proof}

\begin{lemma}
\label{Lem: Bridge from small balls to large balls}
Let $R_0^{\prime} > R_0 > 0$, $f , g \in \L^2 (\IR^d)$, $\alpha > 1$, and $p > 2$. If there exists a constant $C > 0$ such that for all $x_0 \in \IR^d$ and $0 < r < R_0$ the inequality
\begin{align*}
 \bigg( \frac{1}{r^d} \int_{B(x_0 , r)} \abs{f}^p \; \d x \bigg)^{\frac{1}{p}} &\leq C \bigg\{ \bigg( \frac{1}{r^d} \int_{B(x_0 , \alpha r)} \abs{f}^2 \; \d x \bigg)^{\frac{1}{2}} + \sup_{B^{\prime} \supset B(x_0 , r)} \bigg( \frac{1}{\lvert B^{\prime} \rvert} \int_{B^{\prime}} \abs{g}^2 \; \d x \bigg)^{\frac{1}{2}} \bigg\}
\end{align*}
holds. Then there exists a constant $C^{\prime}$ depending on $C$, $\alpha$, $R_0$, $R_0^{\prime}$, and $d$ such that the same inequality holds for all $0 < r < R_0^{\prime}$ with $C$ replaced by $C^{\prime}$.
\end{lemma}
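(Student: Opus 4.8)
The statement requires proof only for $R_0 \le r < R_0^{\prime}$, since for $0 < r < R_0$ it coincides with the hypothesis (take $C^{\prime} \ge C$). The plan is a Vitali-type covering argument. Fix $x_0 \in \IR^d$ and $R_0 \le r < R_0^{\prime}$ and set $\rho := \tfrac{\alpha - 1}{2\alpha} R_0$. Then $0 < \rho < R_0$, so the hypothesis is applicable to balls of radius $\rho$, and -- the one arithmetic point driving everything -- $\alpha\rho = \tfrac{\alpha-1}{2}R_0 \le (\alpha-1)R_0 \le (\alpha-1)r$, which forces $B(y,\alpha\rho) \subset B(x_0,\alpha r)$ for every $y \in B(x_0,r)$.

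Next I would build the covering. Let $\{y_k\}_{k=1}^K$ be a maximal $\rho$-separated subset of $B(x_0,r)$; by maximality the balls $B(y_k,\rho)$ cover $B(x_0,r)$, and disjointness of the balls $B(y_k,\rho/2) \subset B(x_0,r+\rho/2)$ gives $K \le (1 + 2r/\rho)^d \le (1 + 2R_0^{\prime}/\rho)^d$, a constant depending only on $d$, $\alpha$, $R_0$, $R_0^{\prime}$. From $\int_{B(x_0,r)}|f|^p\,\d x \le \sum_{k=1}^K \int_{B(y_k,\rho)}|f|^p\,\d x$ I then invoke the hypothesis on each $B(y_k,\rho)$.

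Each resulting right-hand side has two terms. The $f$-term $\big(\rho^{-d}\int_{B(y_k,\alpha\rho)}|f|^2\,\d x\big)^{1/2}$ is dominated by $(R_0^{\prime}/\rho)^{d/2}\big(r^{-d}\int_{B(x_0,\alpha r)}|f|^2\,\d x\big)^{1/2}$ using $B(y_k,\alpha\rho)\subset B(x_0,\alpha r)$ and $\rho^{-d} \le (R_0^{\prime}/\rho)^d\, r^{-d}$. The $g$-term is the only place needing a little care: a ball containing the small ball $B(y_k,\rho)$ need not contain $B(x_0,r)$. If $B(z,s)\supset B(y_k,\rho)$ then $s \ge \rho$ and $|z-x_0| < s + r$, so $B(z,s+2r)\supset B(x_0,r)$ while $|B(z,s+2r)|/|B(z,s)| = (1+2r/s)^d \le (1+2R_0^{\prime}/\rho)^d$; hence $\sup_{B^{\prime}\supset B(y_k,\rho)}\big(|B^{\prime}|^{-1}\int_{B^{\prime}}|g|^2\,\d x\big)^{1/2} \le (1+2R_0^{\prime}/\rho)^{d/2}\sup_{B^{\prime}\supset B(x_0,r)}\big(|B^{\prime}|^{-1}\int_{B^{\prime}}|g|^2\,\d x\big)^{1/2}$, uniformly in $k$.

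Finally I would assemble the pieces: apply $(a+b)^p \le 2^{p-1}(a^p+b^p)$ in each summand, replace each piece by the $k$-independent large-ball quantities above, sum the $K$ copies, divide by $r^d$ (absorbing $\rho^d/r^d \le 1$), take $p$-th roots, and use subadditivity of $t\mapsto t^{1/p}$. This produces the claimed inequality with $C^{\prime} := (2^{p-1}K)^{1/p}(1+2R_0^{\prime}/\rho)^{d/2}\,C$, which -- since $\rho = \tfrac{\alpha-1}{2\alpha}R_0$ and $K \le (1+2R_0^{\prime}/\rho)^d$ -- depends only on $C$, $\alpha$, $R_0$, $R_0^{\prime}$, and $d$, as required. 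I expect no real obstacle; the only subtlety is the $g$-supremum bookkeeping above, which works precisely because all radii occurring lie between $\rho\sim R_0$ and $R_0^{\prime}$, so enlarging balls costs only a dimensional constant.
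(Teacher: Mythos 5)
Your proposal is correct and follows essentially the same route as the paper: cover the large ball by finitely many balls of radius below $R_0$, bound their number by a volume comparison, apply the hypothesis on each small ball (using that its $\alpha$-dilate stays inside $B(x_0,\alpha r)$), and enlarge the balls $B^{\prime}$ in the supremum at the cost of a dimensional constant so that they contain $B(x_0,r)$. The only differences are cosmetic — you use a maximal $\rho$-separated net with a fixed radius $\rho \sim R_0$ where the paper uses the Vitali covering lemma with radius proportional to $r$, and you enlarge $B(z,s)$ additively to $B(z,s+2r)$ where the paper dilates concentrically — and both yield an admissible constant.
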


\begin{proof}
Let $x_0 \in \IR^d$, $r \in [R_0 , R_0^{\prime})$, and $B := B(x_0 , r)$. Define
\begin{align*}
 \beta := \min\Big\{ \frac{\alpha - 1}{5 \alpha} , \frac{R_0}{5 R_0^{\prime}} \Big\}.
\end{align*}
It is clear that $\{ B(y , \beta r) \}_{y \in B}$ covers $B$. The covering lemma of Vitali yields an at most countable subset $\cF \subset B$ such that the balls $\{ B(y , \beta r) \}_{y \in \cF}$ are pairwise disjoint and such that
\begin{align*}
 B \subset \bigcup_{y \in \cF} B(y , 5 \beta r).
\end{align*}
Furthermore, since $\alpha > 1$ we find $\beta \leq \alpha - 1$ and conclude that $B(y , \beta r) \subset \alpha B$ for each $y \in \cF$. Consequently,
\begin{align*}
 \abs{B(0 , 1)} \beta^d r^d \sharp (\cF) = \sum_{y \in \cF} \abs{B(y , \beta r)} \leq \abs{B(0 , 1)} \alpha^d r^d
\end{align*}
and thus $\sharp(\cF) \leq (\alpha / \beta)^d$. Now, using the covering property of $\{ B(y , 5 \beta r) \}_{y \in \cF}$ in the first and $5 \beta r < R_0$ in the second inequality yields
\begin{align*}
 \bigg( \frac{1}{r^d} \int_B \abs{f}^p \; \d x \bigg)^{\frac{1}{p}} &\leq [5 \beta]^{\frac{d}{p}} \sum_{y \in \cF} \bigg( \frac{1}{[5 \beta r]^d} \int_{B(y , 5 \beta r)} \abs{f}^p \; \d x \bigg)^{\frac{1}{p}} \\
 &\leq [5 \beta]^{\frac{d}{p}} C \sum_{y \in \cF} \bigg\{ \bigg( \frac{1}{[5 \beta r]^d} \int_{B(y , 5 \alpha \beta r)} \abs{f}^2 \; \d x \bigg)^{\frac{1}{2}} \\
 &\qquad+ \sup_{B^{\prime} \supset B(y , 5 \beta r)} \bigg( \frac{1}{\abs{B^{\prime}}} \int_{B^{\prime}} \abs{g}^2 \; \d x \bigg)^{\frac{1}{2}} \bigg\}.
\end{align*}
Next, $5 \alpha \beta \leq \alpha - 1$ and $B(y , (\alpha - 1) r) \subset \alpha B$ imply that the first integral on the right-hand side is controlled by
\begin{align*}
 \bigg( \frac{1}{[5 \beta r]^d} \int_{B(y , 5 \alpha \beta r)} \abs{f}^2 \; \d x \bigg)^{\frac{1}{2}} \leq \frac{1}{[5 \beta]^{\frac{d}{2}}} \bigg( \frac{1}{r^d} \int_{\alpha B} \abs{f}^2 \; \d x \bigg)^{\frac{1}{2}}.
\end{align*}
For the supremum, we first use that $5 \beta \leq 2$ and then, that the arising averages are taken solely on balls that contain $B(y , 2 r)$. Since $B \subset B(y , 2r)$ for every $y \in \cF$, the supremum will be larger if it runs over all balls that contain $B$. Indeed,
\begin{align*}
 \sup_{B^{\prime} \supset B(y , 5 \beta r)} \bigg( \frac{1}{\abs{B^{\prime}}} \int_{B^{\prime}} \abs{g}^2 \; \d x \bigg)^{\frac{1}{2}} &\leq \Big( \frac{2}{5 \beta} \Big)^{\frac{d}{2}} \sup_{B^{\prime} \supset B(y , 5 \beta r)} \bigg( \frac{1}{\lvert \frac{2}{5 \beta} B^{\prime} \rvert} \int_{\frac{2}{5 \beta} B^{\prime}} \abs{g}^2 \; \d x \bigg)^{\frac{1}{2}} \\
 &\leq \Big( \frac{2}{5 \beta} \Big)^{\frac{d}{2}} \sup_{B^{\prime} \supset B} \bigg( \frac{1}{\lvert B^{\prime} \rvert} \int_{B^{\prime}} \abs{g}^2 \; \d x \bigg)^{\frac{1}{2}}.
\end{align*}
We conclude the proof by recalling the bound on $\sharp(\cF)$.
\end{proof}

\begin{proof}[Proof of Theorem~\ref{Thm: Extrapolation of square function estimates}]
Recalling the discussion between Theorem~\ref{Thm: Extrapolation of square function estimates} and Lemma~\ref{Lem: Reverse Hoelder for all balls} it suffices to consider the operator $T_{\IR^d}$ defined in~\eqref{Eq: TRd}. By Lemma~\ref{Lem: Reverse Hoelder for all balls} with $\alpha := \alpha_1$, we infer that~\eqref{Eq: Weak reverse Hoelder inequality} is valid for the operator $T_{\IR^d}$ and all balls $B(x_0 , r)$ with $0 < r < R_0$ and $B(x_0 , r) \cap \Omega \neq \emptyset$. Moreover, if $B(x_0 , r) \cap \Omega = \emptyset$,~\eqref{Eq: Weak reverse Hoelder inequality} is fulfilled trivially. Now, we distinguish the following two cases: \\
\textbf{Case 1: $R_0 = \infty$.} In this case,~\eqref{Eq: Weak reverse Hoelder inequality} is already verified for all balls in $\IR^d$ in the paragraph above this case. The proof can be concluded in this case. \\
\textbf{Case 2: $R_0 < \infty$.} In this case, we recall that~\eqref{Eq: Weak reverse Hoelder inequality} is valid for all balls with radius $0 < r < R_0$ and all balls with arbitrary radius and empty intersection with $\Omega$. Moreover, if $B(x_0 , r) \cap \Omega \neq \emptyset$ and
\begin{align*}
 r \geq \frac{\diam(\Omega)}{\alpha_2 - 1},
\end{align*}
we directly see that $\Omega \subset B(x_0 , \alpha_2 r)$. As a consequence, if a compactly supported function $f \in \L^{\infty} (\IR^d ; X)$ vanishes on $B(x_0 , \alpha_2 r)$, then $T_{\IR^d} f = 0$ by definition of $T_{\IR^d}$. Thus,~\eqref{Eq: Weak reverse Hoelder inequality} is valid for all balls that are large enough. Employing Lemma~\ref{Lem: Bridge from small balls to large balls} shows that~\eqref{Eq: Weak reverse Hoelder inequality} is valid for all balls in $\IR^d$. This concludes the proof.
\end{proof}

Estimates of type~\eqref{Eq: Weak reverse Hoelder inequality} and without the second term on the right-hand side are called \textit{weak reverse H\"older estimates}.

\begin{remark}
\label{Rem: Extension of operator families}
If $\T \subset \Lop(\L^2(\Omega ; X) , \L^2(\Omega ; Y))$ is a uniformly bounded operator family, we see that if we can verify the assumptions of Theorem~\ref{Thm: Extrapolation of square function estimates} with uniform constants for every operator in $\T$, the restriction of each operator to $\L^2(\Omega ; X) \cap \L^q(\Omega ; X)$ extends to a bounded operator from $\L^q(\Omega ; X)$ to $\L^q(\Omega ; Y)$, yielding a uniformly bounded family of operators.
\end{remark}

%%%%%%%%%%%%%%%%%%%%%%%%%%%%%%%%%%%%%%%%%%%%%%%%%%%%%%%%%%%%%%%%%%%%%%%%%%%%%%%%%%%%%%%%%%%%%%%%%%%%%%%%%%%%%%%%%%%%%%%%%%%%%%%%%%%%%%%%%%%%%%%%%%%%%%%%%%%%%%%%%%%%
\section{Vector valued weak reverse H\"older estimates}
\label{Sec: Vector valued weak reverse Hoelder estimates}

We begin by proving Caccioppoli's inequality for higher-order elliptic systems subject to mixed boundary conditions. The proof is essentially the one of Barton~\cite[Sec.~3]{Barton}, with the modification that we not just consider balls, but also the sets $U_{y , r}^+$ defined in Remark~\ref{Rem: Mixed boundary geometry} and solutions which locally satisfy
\begin{align*}
 \lambda u_i + (-1)^m \sum_{j = 1}^N \sum_{\abs{\alpha} , \abs{\beta} = m} \partial^{\alpha} [\mu_{\alpha \beta}^{i j} \partial^{\beta} u_j] = 0 \qquad (1 \leq i \leq N).
\end{align*}
Barton considered only the case $\lambda = 0$. \par
As we are now concerned with the proof of Theorem~\ref{Thm: Maintheorem}, we will assume that $\Omega$ and $D$ are subject to Assumption~\ref{Ass: Mixed boundary geometry}. We will also use $\lambda$ as a resolvent parameter, so $\lambda \in \Sec_{\pi - \theta}$, where $\theta \in (\omega , \pi]$ and $\omega$ is such that $A$ is sectorial of angle $\omega$ on $\L^2(\Omega ; \IC^N)$. Recall that $M$ is the bound for the bi-Lipschitz constants of the homeomorphisms $\Phi_x$, see Assumption~\ref{Ass: Mixed boundary geometry}. Finally, we agree upon writing $\| \cdot \|_{\L^p(\Xi)}$ instead of $\| \cdot \|_{\L^p(\Xi ; \IC^l)}$ for sets $\Xi \subset \IR^d$ and $l \in \IN$.

\begin{lemma}[Caccioppoli's inequality part~1]
\label{Lem: Caccioppoli (1)}
Let $x_0 \in \overline{\Omega}$ and $r > 0$. Distinguish the following cases:
\begin{enumerate}
 \item[(1)] $x_0 \in \Omega$ and $r < \dist(x_0 , \partial \Omega)$;
 \item[(2)] $x_0 \in \partial \Omega$ with $\dist(x_0 , \overline{\partial \Omega \setminus D}) \leq 1 / (2 M)$ and $r \leq 1 / 4$;
 \item[(3)] $x_0 \in \partial \Omega$ with $\dist(x_0 , \overline{\partial \Omega \setminus D}) > 1 / (2 M)$ and $r \leq 1 / (2 M)$.
\end{enumerate}
Let $0 < s < t \leq 1$ and $f \in \L^2(\Omega ; \IC^N)$ be such that $f = 0$ on $B(x_0 , r) \cap \Omega$ in cases $(1)$ and $(3)$ or $f = 0$ on $U_{x_0 , r}^+$ in case $(2)$. Define $u := (\lambda + A)^{-1} f$. Then there exists a constant $C > 0$ depending only on $d$, $N$, $m$, $\kappa$, $M$, $\theta$, $\omega$, and $\Lambda$, such that
\begin{align*}
 \abs{\lambda} \int_{\mathcal{B}_{s r} \cap \Omega} \abs{u}^2 \; \d x &+ \int_{\mathcal{B}_{s r} \cap \Omega} \abs{\nabla^m u}^2 \; \d x \leq C \sum_{k = 0}^{m - 1} [(t - s) r]^{- 2(m - k)} \int_{[\mathcal{B}_{t r} \setminus \mathcal{B}_{s r}] \cap \Omega} \lvert \nabla^k u \rvert^2 \; \d x,
\end{align*}
where in cases $(1)$ and $(3)$, $\mathcal{B}_{\alpha r} := B(x_0 , \alpha r),$ and in case $(2)$, $\mathcal{B}_{\alpha r} := U_{x_0 , \alpha r}$ for $\alpha \in (0 , 1]$.
\end{lemma}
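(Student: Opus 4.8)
The strategy is the classical Caccioppoli argument for higher-order systems, adapted to carry the resolvent parameter $\lambda$ and to treat the boundary sets $U_{x_0 , r}^+$ on the same footing as interior balls. I would begin by fixing a smooth cutoff function $\eta$ that equals $1$ on $\mathcal{B}_{sr}$, is supported in $\mathcal{B}_{tr}$, and satisfies $\lvert \nabla^k \eta \rvert \leq C [(t-s)r]^{-k}$ for $0 \leq k \leq m$; in case (2) one pulls back a cutoff adapted to the cube $Q(\Phi_{x_0}(x_0), \cdot)$ via $\Phi_{x_0}$ and uses the bi-Lipschitz bounds of Remark~\ref{Rem: Mixed boundary geometry}~(1) to control its derivatives, which is why the Lipschitz constant $M$ enters the final constant. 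The test function is then $v := \eta^{2m} u$ (or a slight variant), which lies in $\W^{m,2}_{\ID}(\Omega ; \IC^N)$ because $u$ does and $\eta$ vanishes near the relevant part of the boundary; since $f = 0$ on the support of $\eta$, testing the local equation $\lambda u + A u = 0$ against $v$ gives
\begin{align*}
 \lambda \int_{\Omega} \eta^{2m} \lvert u \rvert^2 \; \d x + \mathfrak{a}(u , \eta^{2m} u) = 0 .
\end{align*}

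Next I would expand $\mathfrak{a}(u, \eta^{2m} u)$ using the Leibniz rule on $\partial^{\alpha}(\eta^{2m} u_i)$. The leading term reproduces $\sum_{\abs\alpha,\abs\beta = m}\int \mu_{\alpha\beta}^{ij}\,\eta^{2m}\,\partial^\beta u_j\,\overline{\partial^\alpha u_i}$, to which Gårding's inequality from Assumption~\ref{Ass: Gardings inequality} applies after accounting for the commutator between $\partial^\alpha$ and multiplication by $\eta^{2m}$; all the remaining terms involve at least one derivative falling on $\eta$, hence contain a factor $\nabla^j\eta^{\,\cdot}$ with $j \geq 1$ and only derivatives of $u$ of order $\leq m$. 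One then takes real parts: the $\lambda$-term contributes $\Re(\lambda)\int\eta^{2m}\lvert u\rvert^2$ if $\Re\lambda \geq 0$, but to get $\lvert\lambda\rvert$ rather than $\Re\lambda$ on the left one must instead use that $\lambda$ lies in the sector $\Sec_{\pi-\theta}$ together with the numerical-range/sectoriality information: combining the real and imaginary parts of the tested identity (a standard sectorial-form manipulation, testing also with $\e^{\ii\varphi}v$ or estimating $\lvert\lambda\rvert\lesssim \Re(\e^{\ii\psi}\lambda)$ for suitable $\psi$ depending on $\theta,\omega$) yields
\begin{align*}
 \lvert\lambda\rvert \int_{\Omega}\eta^{2m}\lvert u\rvert^2 \; \d x + \kappa\int_{\Omega}\eta^{2m}\lvert\nabla^m u\rvert^2 \; \d x \leq C\sum_{k=0}^{m-1}\sum_{j=1}^{m}\big\|\,\nabla^j\eta^{2m}\cdot\nabla^k u\,\big\|_{\L^2}\big\|\nabla^m u\big\|_{\L^2(\supp\eta)} + \dots,
\end{align*}
schematically, where the omitted terms are lower-order products of the same type.

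The final step is the absorption. Using $\lvert\nabla^j \eta^{2m}\rvert \leq C\,\eta^{2m-j}[(t-s)r]^{-j}$ (which holds for the powered cutoff), every term on the right with a top-order factor $\nabla^m u$ can be written as $\big(\eta^{m}\nabla^m u\big)$ times $\big(\eta^{m-j}[(t-s)r]^{-j}\nabla^k u\big)$ with $k + j \le m$, and then Young's inequality $ab \le \delta a^2 + \delta^{-1}b^2$ absorbs the $\delta\,\eta^{2m}\lvert\nabla^m u\rvert^2$ piece into the left-hand side. The residual terms, after iterating this for all intermediate orders $0 \le k \le m-1$, produce exactly $\sum_{k=0}^{m-1}[(t-s)r]^{-2(m-k)}\int_{\mathcal{B}_{tr}\setminus\mathcal{B}_{sr}\cap\Omega}\lvert\nabla^k u\rvert^2$, the support of the error localizing to the annulus $\mathcal{B}_{tr}\setminus\mathcal{B}_{sr}$ because $\nabla^j\eta \equiv 0$ there is $\eta$ is locally constant. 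I expect the main obstacle to be the bookkeeping of the intermediate-order terms in the Leibniz expansion of $\mathfrak{a}(u,\eta^{2m}u)$ — there are many of them and each must be matched against the allowed right-hand side — together with the careful handling of the sector condition to upgrade $\Re\lambda$ to $\lvert\lambda\rvert$; the latter is where the dependence of $C$ on $\theta$ and $\omega$ is generated. The boundary case (2) adds a layer of notational overhead through the chart $\Phi_{x_0}$, but since $\mathfrak{a}$ is only an $\L^2$-form and $\eta$ already lives on $\Omega$, one never needs to transform the equation itself, only to verify that the pulled-back cutoff has the stated derivative bounds, which follows directly from Remark~\ref{Rem: Mixed boundary geometry}.
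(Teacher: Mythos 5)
Your proposal is correct and follows essentially the same route as the paper's proof: the powered cutoff, the test function $\eta^{2m}u$, the double Leibniz expansion producing $\mathfrak{a}(\eta^m u , \eta^m u)$ plus error terms supported in the annulus, the lower bound coming from the sector condition $\pi - \theta + \omega < \pi$ combined with G\r{a}rding's inequality, and Young's inequality to absorb the top-order term before setting $\eta = 1$ on $\mathcal{B}_{sr}$. The only cosmetic slips are that $\eta^{2m} u \in \W^{m , 2}_{\ID}(\Omega ; \IC^N)$ simply because $\eta$ is smooth (not because it vanishes near the Dirichlet part), and that the upgrade from $\Re \lambda$ to $\abs{\lambda}$ is cleanest via the two-sector comparison $\abs{z_1 + z_2} \geq C_{\theta , \omega} ( \abs{z_1} + \abs{z_2} )$ that the paper invokes through the numerical range, since one fixed rotation angle $\psi$ cannot serve all $\lambda \in \Sec_{\pi - \theta}$ when $\pi - \theta > \pi / 2$.
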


\begin{proof}
Take a cutoff function $\varphi \in \C_c^{\infty} (\IR^d)$ which in cases $(1)$ and $(3)$ is identically one on $B(x_0 , sr)$ and zero on $B(x_0 , tr)^c$ and which satisfies $\| \nabla^k \varphi \|_{\L^{\infty} (\IR^d)} \leq C_d [(t - s)r]^{-k}$ for all $0 \leq k \leq m$. In case $(2)$, take $\varphi$ to be one on $U_{x_0 , sr}$ and zero on $U_{x_0 , tr}^c$ with estimates $\| \nabla^k \varphi \|_{\L^{\infty} (\IR^d)} \leq C_{d , M} [(t - s)r]^{-k}$ for all $0 \leq k \leq m$. In case~(2), such a function $\varphi$ exists by the estimates proven in Remark~\ref{Rem: Mixed boundary geometry}~(1). The constant $C_d$ depends only on $d$ and $C_{d , M}$ on $d$ and $M$. \par
Define $\psi := u \varphi^{2m}$, which again is a function in $\W^{m , 2}_{\ID} (\Omega ; \IC^N)$, since $\varphi$ is smooth. Testing with $\psi$ yields
\begin{align*}
 0 = \lambda \int_{\mathcal{B}_{tr} \cap \Omega} \abs{u \varphi^m}^2 \; \d x + \mathfrak{a}(u , \psi).
\end{align*}
Moreover, Leibniz' rule yields numbers $c_{\alpha \beta} \in \IN$ with $c_{\alpha 0} = c_{\alpha \alpha} = 1$ such that
\begin{align*}
 \mathfrak{a}(u , \psi) &= \sum_{\abs{\alpha} , \abs{\beta} = m} \sum_{i , j = 1}^N \sum_{\gamma < \alpha} \int_{[\mathcal{B}_{tr} \setminus \mathcal{B}_{sr}] \cap \Omega} \mu_{\alpha \beta}^{i j} \partial^{\beta} u_j c_{\alpha \gamma} \partial^{\alpha - \gamma} \varphi^m \partial^{\gamma} [\varphi^m \overline{u_i}] \; \d x \\
 &\qquad+ \sum_{\abs{\alpha} , \abs{\beta} = m} \sum_{i , j = 1}^N \int_{\mathcal{B}_{tr} \cap \Omega} \mu_{\alpha \beta}^{i j} \partial^{\beta} u_j \varphi^m \partial^{\alpha} [\varphi^m \overline{u_i}] \; \d x.
\end{align*}
Note that the integration in the first integral on the right-hand side is performed only on $[\mathcal{B}_{tr} \setminus \mathcal{B}_{sr}] \cap \Omega$ since $\varphi^m$ is constant on both $\mathcal{B}_{sr}$ and $\mathcal{B}_{t r}^c$. Next, employing Leipniz' formula one can show that there exist smooth functions $\zeta_{\alpha \beta}$ such that
\begin{align}
\label{Eq: Auxiliary Leibniz formula}
 \sum_{\gamma < \alpha} c_{\alpha \gamma} \partial^{\alpha - \gamma} \varphi^m \partial^{\gamma} [\varphi^m \overline{u_i}] = \sum_{\delta < \alpha} \varphi^m \zeta_{\alpha \delta} \partial^{\delta} \overline{u_i}
\end{align}
and $\|\zeta_{\alpha \delta}\|_{\L^{\infty} (\IR^d)} \leq C_{d , M , m} [(t - s)r]^{\abs{\delta} - \abs{\alpha}}$, where $C_{d , M , m}$ solely depends on $d$, $M$, and $m$. Using~\eqref{Eq: Auxiliary Leibniz formula}, we derive
\begingroup
\allowdisplaybreaks
\begin{align*}
 \mathfrak{a}(u , \psi) &= \sum_{\abs{\alpha} , \abs{\beta} = m} \sum_{i , j = 1}^N \sum_{\delta < \alpha} \int_{[\mathcal{B}_{tr} \setminus \mathcal{B}_{sr}] \cap \Omega} \mu_{\alpha \beta}^{i j} \partial^{\beta} u_j \varphi^m \zeta_{\alpha \delta} \partial^{\delta} \overline{u_i} \; \d x \\
 &\qquad+ \sum_{\abs{\alpha} , \abs{\beta} = m} \sum_{i , j = 1}^N \int_{\mathcal{B}_{tr} \cap \Omega} \mu_{\alpha \beta}^{i j} \partial^{\beta} u_j \varphi^m \partial^{\alpha} [\varphi^m \overline{u_i}] \; \d x.
\intertext{Rewriting $\partial^{\beta} u_j \varphi^m$ by using Leibniz' rule reveals}
 &= - \sum_{\abs{\alpha} , \abs{\beta} = m} \sum_{i , j = 1}^N \sum_{\delta < \alpha} \sum_{\gamma < \beta} \int_{[\mathcal{B}_{tr} \setminus \mathcal{B}_{sr}] \cap \Omega} \mu_{\alpha \beta}^{i j} c_{\beta \gamma} \partial^{\gamma} u_j \partial^{\beta - \gamma} \varphi^m \zeta_{\alpha \delta} \partial^{\delta} \overline{u_i} \; \d x \\
&\qquad- \sum_{\abs{\alpha} , \abs{\beta} = m} \sum_{i , j = 1}^N \sum_{\gamma < \beta} \int_{[\mathcal{B}_{tr} \setminus \mathcal{B}_{sr}] \cap \Omega} \mu_{\alpha \beta}^{i j} c_{\beta \gamma} \partial^{\gamma} u_j \partial^{\beta - \gamma} \varphi^m \partial^{\alpha} [\varphi^m \overline{u_i}] \; \d x \\
 &\qquad+ \sum_{\abs{\alpha} , \abs{\beta} = m} \sum_{i , j = 1}^N \sum_{\delta < \alpha} \int_{[\mathcal{B}_{tr} \setminus \mathcal{B}_{sr}] \cap \Omega} \mu_{\alpha \beta}^{i j} \partial^{\beta} [\varphi^m u_j] \zeta_{\alpha \delta} \partial^{\delta} \overline{u_i} \; \d x \\
 &\qquad+ \sum_{\abs{\alpha} , \abs{\beta} = m} \sum_{i , j = 1}^N \int_{\mathcal{B}_{tr} \cap \Omega} \mu_{\alpha \beta}^{i j} \partial^{\beta} [\varphi^m u_j] \partial^{\alpha} [\varphi^m \overline{u_i}] \; \d x.
\end{align*}
\endgroup
Note that the last term on the right-hand side can be identified with $\mathfrak{a}(\varphi^m u , \varphi^m u)$. Summarizing, we find a constant $C >  0$ depending only on $d$, $N$, $m$, $M$, and $\Lambda$ such that
\begin{align*}
 \Big\lvert \lambda \int_{\mathcal{B}_{tr} \cap \Omega} &\abs{\varphi^m u}^2 \; \d x + \mathfrak{a}(\varphi^m u , \varphi^m u) \Big\rvert \\
 &\leq C \bigg\{ \sum_{\abs{\alpha} , \abs{\beta} = m} \sum_{\delta < \alpha} \sum_{\gamma < \beta} \frac{\|\partial^{\gamma} u\|_{\L^2([\mathcal{B}_{tr} \setminus \mathcal{B}_{sr}] \cap \Omega)}}{[(t - s) r]^{m - \abs{\gamma}}} \frac{\|\partial^{\delta} u\|_{\L^2([\mathcal{B}_{tr} \setminus \mathcal{B}_{sr}] \cap \Omega)}}{[(t - s) r]^{m - \abs{\delta}}} \\
 &\qquad+ \sum_{\abs{\beta} = m} \sum_{\gamma < \beta} \frac{\|\partial^{\gamma} u\|_{\L^2([\mathcal{B}_{tr} \setminus \mathcal{B}_{sr}] \cap \Omega)}}{[(t - s) r]^{m - \abs{\gamma}} } \| \nabla^m [\varphi^m u] \|_{\L^2(\mathcal{B}_{tr} \cap \Omega)} \bigg\}.
\end{align*}
Using the sectoriality of $\mathfrak{a}$, see Remark~\ref{Rem: Numerical range}, as well as $\lambda \in \Sec_{\pi - \theta}$ and $\pi - \theta + \omega < \pi$, we conclude that there exists a constant $C_{\theta , \omega}$ depending only on $\theta$ and $\omega$ such that
\begin{align*}
 \Big\lvert \lambda \int_{\mathcal{B}_{tr} \cap \Omega} \abs{\varphi^m u}^2 &\; \d x + \mathfrak{a}(\varphi^m u , \varphi^m u) \Big\rvert \\
 &\geq C_{\theta , \omega} \Big\{ \abs{\lambda} \int_{\mathcal{B}_{t r} \cap \Omega} \lvert \varphi^m u \rvert^2 \; \d x + \lvert \fa (\varphi^m u , \varphi^m u) \rvert \Big\} \\
 \intertext{holds. By G\r{a}rding's inequality, we derive}
 &\geq C_{\theta , \omega} \Big\{ \abs{\lambda} \int_{\mathcal{B}_{t r} \cap \Omega} \lvert \varphi^m u \rvert^2 \; \d x + \kappa \int_{\mathcal{B}_{t r} \cap \Omega} \lvert \nabla^m [\varphi^m u] \rvert^2 \; \d x \Big\}.
\end{align*}
Next, use Young's inequality to estimate
\begin{align*}
 C \sum_{\abs{\beta} = m} \sum_{\gamma < \beta} &\frac{\|\partial^{\gamma} u\|_{\L^2([\mathcal{B}_{tr} \setminus \mathcal{B}_{sr}] \cap \Omega)}}{[(t - s) r]^{m - \abs{\gamma}} } \| \nabla^m [\varphi^m u] \|_{\L^2(\mathcal{B}_{tr} \cap \Omega)} \\
 &\leq \frac{C}{2 \eps} \sum_{\abs{\beta} = m} \sum_{\gamma < \beta} \frac{\|\partial^{\gamma} u\|_{\L^2([\mathcal{B}_{tr} \setminus \mathcal{B}_{sr}] \cap \Omega)}^2}{[(t - s) r]^{2(m - \abs{\gamma})} } + \frac{C \eps}{2} \sum_{\abs{\beta} = m} \sum_{\gamma < \beta} \| \nabla^m [\varphi^m u] \|_{\L^2(\mathcal{B}_{tr} \cap \Omega)}^2.
\end{align*}
Choose $\eps$, such that
\begin{align*}
 \frac{C \eps}{2} \sum_{\abs{\beta} = m} \sum_{\gamma < \beta} 1 = \frac{C_{\theta , \omega} \kappa}{2}.
\end{align*}
Then, absorb $C_{\theta , \omega} \kappa \| \nabla^m [\varphi^m u] \|_{\L^2(\mathcal{B}_{tr} \cap \Omega)}^2 / 2$ from the right-hand side onto the left-hand side of the whole inequality. Using that and $\varphi = 1$ on $\mathcal{B}_{sr}$ concludes the proof.
\end{proof}

The preceding lemma shows that one can locally control $\abs{\lambda}^{1 / 2} u$ and $\nabla^m u$ in $\L^2$ by the $\L^2$-norms of all derivatives of order strictly less than $m$. However, it is desirable to control them solely by $u$ in the $\L^2$-norm. To prove that, we adapt the proof of Barton~\cite[Thm.~18]{Barton} to mixed boundary conditions. For this purpose, we prove the following lemma, which is a generalization of Giaquinta and Martinazzi~\cite[Lem.~8.18]{Giaquinta_Martinazzi} and is implicitly contained in the proof of Barton.

\begin{lemma}
\label{Lem: Giaquinta_Martinazzi lemma}
Let $0 \leq s_0 < t_0 < \infty$ and $k \in \IN$. Assume that $\phi : [s_0 , t_0] \to \IR$ is a non-negative bounded function. Suppose that there exist constants $A_1 , \dots , A_k > 0$, $\alpha_1 , \dots , \alpha_k > 0$, and $0 \leq \eps < 1$ such that for all $s_0 \leq s < t \leq t_0$ we have
\begin{align*}
 \phi (s) \leq \sum_{l = 1}^k A_l (t - s)^{- \alpha_l} + \eps \phi(t).
\end{align*}
Then there exists a constant $C > 0$ depending only on $\max_{i = 1}^k \{ \alpha_i \}$ and $\eps$, such that for all $s_0 \leq s < t \leq t_0$ we have
\begin{align*}
 \phi(s) \leq C \sum_{l = 1}^k A_l (t - s)^{- \alpha_l}.
\end{align*}
\end{lemma}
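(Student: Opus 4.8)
### Proof plan for Lemma~\ref{Lem: Giaquinta_Martinazzi lemma}

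The plan is to use the standard ``hole-filling'' iteration argument of Giaquinta and Martinazzi, adapted to several terms $A_l (t-s)^{-\alpha_l}$ at once. First I would fix $s_0 \le s < t \le t_0$ and introduce a geometrically increasing sequence of radii $\rho_0 := s < \rho_1 < \rho_2 < \cdots$ converging to $t$, say $\rho_{i+1} - \rho_i := (1 - \tau)\tau^i (t - s)$ for a parameter $\tau \in (0,1)$ to be chosen close to $1$, so that $\rho_i \to t$ and $\rho_i < t$ for all $i$. Applying the hypothesis with $s \rightsquigarrow \rho_i$ and $t \rightsquigarrow \rho_{i+1}$ gives
\begin{align*}
 \phi(\rho_i) \le \sum_{l=1}^k A_l (\rho_{i+1} - \rho_i)^{-\alpha_l} + \eps\, \phi(\rho_{i+1}) = \sum_{l=1}^k \frac{A_l}{(1-\tau)^{\alpha_l}\tau^{i\alpha_l}(t-s)^{\alpha_l}} + \eps\, \phi(\rho_{i+1}).
\end{align*}

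Next I would iterate this bound from $i = 0$ up to $i = n-1$. Substituting repeatedly yields
\begin{align*}
 \phi(s) = \phi(\rho_0) \le \sum_{i=0}^{n-1} \eps^i \sum_{l=1}^k \frac{A_l}{(1-\tau)^{\alpha_l}\,\tau^{i\alpha_l}\,(t-s)^{\alpha_l}} + \eps^n \phi(\rho_n).
\end{align*}
Since $\phi$ is bounded on $[s_0,t_0]$ and $0 \le \eps < 1$, the term $\eps^n \phi(\rho_n) \to 0$ as $n \to \infty$. For the main sum I would interchange the order of summation and evaluate the geometric series in $i$: choosing $\tau$ close enough to $1$ that $\eps\, \tau^{-\alpha_l} < 1$ for every $l = 1,\dots,k$ — which is possible because there are only finitely many $\alpha_l$, all positive, and $\eps < 1$ — the series $\sum_{i=0}^\infty (\eps\,\tau^{-\alpha_l})^i$ converges for each $l$. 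Letting $n \to \infty$ gives
\begin{align*}
 \phi(s) \le \sum_{l=1}^k \frac{A_l}{(1-\tau)^{\alpha_l}(t-s)^{\alpha_l}} \cdot \frac{1}{1 - \eps\,\tau^{-\alpha_l}} \le C \sum_{l=1}^k A_l (t-s)^{-\alpha_l},
\end{align*}
where $C := \max_{1 \le l \le k} \big[ (1-\tau)^{-\alpha_l}(1-\eps\tau^{-\alpha_l})^{-1} \big]$ depends only on $\eps$, on $\tau$, and on $\max_l \alpha_l$ (together with the fact that $\tau^{-\alpha_l} \le \tau^{-\max_l \alpha_l}$ and $(1-\tau)^{-\alpha_l} \le (1-\tau)^{-\max_l \alpha_l}$ once $1-\tau \le 1$, so the bound can be phrased using only $\max_l \alpha_l$); since $\tau$ itself can be fixed as a function of $\eps$ and $\max_l \alpha_l$, the constant $C$ depends only on these data, as claimed.

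The only slightly delicate point is the choice of $\tau$: one needs $\eps\,\tau^{-\alpha_l} < 1$ simultaneously for all $l$, i.e.\ $\tau > \eps^{1/\alpha_l}$ for each $l$, which holds as soon as $\tau > \eps^{1/\max_l \alpha_l}$ (using $\eps < 1$), so any $\tau \in (\eps^{1/\max_l \alpha_l}, 1)$ works and depends only on $\eps$ and $\max_l \alpha_l$. If $\eps = 0$ the statement is immediate by taking $t-s$ in the hypothesis and there is nothing to iterate. I do not anticipate a genuine obstacle here; this is the classical iteration lemma, and the multi-term version costs nothing beyond keeping track of the finitely many exponents when choosing $\tau$ and bounding the resulting constant.
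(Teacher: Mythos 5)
Your proposal is correct and follows essentially the same argument as the paper: the same geometric sequence $\rho_{i+1}-\rho_i=(1-\tau)\tau^{i}(t-s)$, the same iteration of the hypothesis with the bounded remainder $\eps^{n}\phi(\rho_n)\to 0$, and the same choice of $\tau$ with $\eps\,\tau^{-\max_l\alpha_l}<1$ to sum the geometric series. The only cosmetic difference is that you track the constant per exponent $\alpha_l$ before reducing to $\max_l\alpha_l$, while the paper bounds everything by the maximal exponent directly.
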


\begin{proof}
Let $0 < \tau < 1$ to be determined and define
\begin{align*}
 \rho_0 := s, \quad \rho_{n + 1} := \rho_n + (1 - \tau) \tau^n (t - s) \qquad (n \in \IN_0).
\end{align*}
Notice that
\begin{align*}
 \rho_{n + 1} = s + (1 - \tau) \sum_{j = 0}^n \tau^j (t - s) < s + (1 - \tau) \sum_{n = 0}^{\infty} \tau^j (t - s) = t.
\end{align*}
Deduce inductively
\begin{align*}
 \phi(\rho_0) \leq \sum_{l = 1}^k A_l (\rho_1 - \rho_0)^{- \alpha_l} + \eps \phi(\rho_1) \leq \sum_{j = 0}^{n - 1} \eps^j \sum_{l = 1}^k A_l (\rho_{j + 1} - \rho_j)^{- \alpha_l} + \eps^n \phi(\rho_n).
\end{align*}
Rearranging the sums on the right-hand side and using that $\rho_{j + 1} - \rho_j = (1 - \tau) \tau^j (t - s)$ yields
\begin{align*}
 \sum_{j = 0}^{n - 1} \eps^j \sum_{l = 1}^k A_l (\rho_{j + 1} - \rho_j)^{- \alpha_l} &= \sum_{l = 1}^k A_l (1 - \tau)^{- \alpha_l} (t - s)^{- \alpha_l} \sum_{j = 0}^{n - 1} \eps^j \tau^{- j \alpha_l}.
\end{align*}
Choose $\tau$ such that $\eps \tau^{- \max_i \{\alpha_i \}} < 1$ and let $n \to \infty$ to conclude
\begin{align*}
 \phi(s) &\leq (1 - \tau)^{- \max_i \{ \alpha_i \}} \sum_{j = 0}^{\infty} \big( \eps \tau^{- \max_i \{ \alpha_i \}} \big)^j \sum_{l = 1}^k A_l (t - s)^{- \alpha_l}. \qedhere
\end{align*}
\end{proof}

Now, we are ready to conclude the proof of Caccioppoli's inequality with the sole $\L^2$-norm of $u$ on the right-hand side. For the reduction of the differentiability on the right-hand side of the inequality in Lemma~\ref{Lem: Caccioppoli (1)}, recall that by Gagliardo--Nirenberg's inequality, one can estimate
\begin{align*}
 \| \nabla^{m - 1} u \|_{\L^2} \leq C \| u \|_{\L^2}^{\theta} \| u \|_{\W^{m , 2}}^{1 - \theta},
\end{align*}
for some $\theta \in (0 , 1)$.  The term involving $\| \nabla^m u \|_{\L^2}$ in the norm of $\| u \|_{\W^{m , 2}}$ can then be controlled by means of the first part of Caccioppoli's inequality, so that only terms of differentiability strictly less than $m$ occur on the right-hand side. Using Young's inequality, we can produce an $\eps$ in front of the $\L^2$-norm of $\nabla^{m - 1} u$ on the right-hand side. This leads to the situation of Lemma~\ref{Lem: Giaquinta_Martinazzi lemma}. \par
Due to the implicit dependence of the constants in the Gagliardo--Nirenberg inequality, we have to restrict the size of the parameter $s$ to be away from zero.

\begin{lemma}[Caccioppoli's inequality part~2]
\label{Lem: Caccioppoli (2)}
If in the situation of Lemma~\ref{Lem: Caccioppoli (1)} also $0 < 1 / 2 \leq s < t \leq 1$ holds, then there exists a constant $C > 0$ depending only on $d$, $N$, $m$, $\kappa$, $M$, $\theta$, $\omega$, and $\Lambda$ such that
\begin{align*}
 \int_{\mathcal{B}_{sr} \cap \Omega} \lvert \nabla^k u \rvert^2 \; \d x \leq \frac{C}{[(t - s) r]^{2k}} \int_{\mathcal{B}_{tr} \cap \Omega} \abs{u}^2 \; \d x \qquad (1 \leq k \leq m)
\end{align*}
holds.
\end{lemma}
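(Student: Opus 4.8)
The plan is to obtain the estimate from three ingredients: the first part of Caccioppoli's inequality (Lemma~\ref{Lem: Caccioppoli (1)}), a \emph{scaled} Gagliardo--Nirenberg interpolation inequality, and the iteration Lemma~\ref{Lem: Giaquinta_Martinazzi lemma}. The interpolation inequality I would put in place is: there is a constant depending only on $d$, $m$, $M$, $k$ such that for every $\eta \in (0 , 1)$, every $0 \leq k \leq m$, and every admissible region $\mathcal{B}_{\rho} \cap \Omega$ with $0 < \rho \leq 1$,
\begin{align*}
 \int_{\mathcal{B}_{\rho} \cap \Omega} \lvert \nabla^{k} u \rvert^{2} \, \d x \leq \eta \, \rho^{2(m - k)} \int_{\mathcal{B}_{\rho} \cap \Omega} \lvert \nabla^{m} u \rvert^{2} \, \d x + C_{\eta} \, \rho^{-2k} \int_{\mathcal{B}_{\rho} \cap \Omega} \lvert u \rvert^{2} \, \d x .
\end{align*}
In case $(1)$ this is Gagliardo--Nirenberg~\cite{Nirenberg} on a ball followed by Young's inequality. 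In case $(3)$ the hypotheses $\dist(x_{0} , \overline{\partial \Omega \setminus D}) > 1/(2M)$ and $r \leq 1/(2M)$ force $B(x_{0} , r) \cap \partial \Omega \subset D \subset D_{i}$ for every $i$, so each component of $u$ vanishes to order $m - 1$ near that portion of the boundary and extends by zero to a $\W^{m , 2}$-function on the full ball, reducing case $(3)$ to case $(1)$. In case $(2)$ the region $U_{x_{0} , \rho}^{+}$ is a bi-Lipschitz image of a half-box and hence, as recorded in Section~\ref{Sec: Notations, assumptions and preliminary considerations}, a Jones $(\eps , \delta)$-domain whose parameters, after rescaling to unit size, depend only on $M$; the inequality then follows from the Jones extension and Gagliardo--Nirenberg on $\IR^{d}$, with the displayed powers of $\rho$ coming out of the scaling.

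Next I would fix $1/2 \leq s < t \leq 1$, set $t_{*} := (s + t)/2 \in [1/2 , 1]$, and first treat $k = m$. For $\sigma \in [s , t_{*}]$ put $\phi(\sigma) := \int_{\mathcal{B}_{\sigma r} \cap \Omega} \lvert \nabla^{m} u \rvert^{2} \, \d x$, which is finite and non-negative because $u = (\lambda + A)^{-1} f \in \dom(A) \subset \W^{m , 2}_{\ID}(\Omega ; \IC^{N})$. Given $s \leq \sigma_{1} < \sigma_{2} \leq t_{*}$ with $w := \sigma_{2} - \sigma_{1}$, I would apply Lemma~\ref{Lem: Caccioppoli (1)} with the pair $(\sigma_{1} , \sigma_{1} + w/2)$ in place of $(s , t)$, drop the $\lvert \lambda \rvert$-term, and bound $\phi(\sigma_{1})$ by $C \sum_{j = 0}^{m - 1} [w r]^{-2(m - j)} \int_{[\mathcal{B}_{(\sigma_{1} + w/2) r} \setminus \mathcal{B}_{\sigma_{1} r}] \cap \Omega} \lvert \nabla^{j} u \rvert^{2} \, \d x$. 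The $j = 0$ summand is already of the form $A (wr)^{-2m}$ with $A$ a fixed multiple of $\int_{\mathcal{B}_{t r} \cap \Omega} \lvert u \rvert^{2} \, \d x$. For $1 \leq j \leq m - 1$ I would cover the thin annulus $[\mathcal{B}_{(\sigma_{1} + w/2) r} \setminus \mathcal{B}_{\sigma_{1} r}] \cap \Omega$ by a bounded-overlap family of regions of radius $\asymp wr$ — interior balls away from $\partial \Omega$, and pieces adapted to the charts at nearby boundary points (respectively balls where the boundary is locally Dirichlet), all contained in $\mathcal{B}_{\sigma_{2} r} \cap \Omega$ — and apply the scaled interpolation inequality on each with a small parameter $\eta$. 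Since the covering radius is comparable to $wr$, the factors $[wr]^{-2(m - j)}$ cancel losslessly against the $\rho^{2(m - j)}$ from interpolation, so summation produces a contribution $C \eta \, \phi(\sigma_{2})$ from the $\nabla^{m} u$-terms together with $C_{\eta} (wr)^{-2m} \int_{\mathcal{B}_{t r} \cap \Omega} \lvert u \rvert^{2} \, \d x$ from the $u$-terms. (It is precisely this cancellation that forces the interpolation to be localized at scale $wr$ and not at scale $r$; the restriction $s \geq 1/2$ keeps all radii $\sigma r$ comparable to $r$ so the implicit interpolation constants do not degrade, cf.\@ the discussion preceding the lemma.) Choosing $\eta$ so small that $C \eta \leq 1/2$, uniformly in everything, gives
\begin{align*}
 \phi(\sigma_{1}) \leq A' \, r^{-2m} (\sigma_{2} - \sigma_{1})^{-2m} \int_{\mathcal{B}_{t r} \cap \Omega} \lvert u \rvert^{2} \, \d x + \tfrac{1}{2} \phi(\sigma_{2}) \qquad (s \leq \sigma_{1} < \sigma_{2} \leq t_{*}) ,
\end{align*}
and Lemma~\ref{Lem: Giaquinta_Martinazzi lemma} (one term, $\alpha_{1} = 2m$, $\eps = 1/2$) on $[s , t_{*}]$ turns this into $\phi(s) \leq C [(t - s) r]^{-2m} \int_{\mathcal{B}_{t r} \cap \Omega} \lvert u \rvert^{2} \, \d x$, since $t_{*} - s = (t - s)/2$.

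Finally, for $1 \leq k \leq m - 1$ no further iteration is needed: covering $\mathcal{B}_{sr} \cap \Omega$ by a bounded-overlap family of regions of radius $\asymp (t - s) r$ contained in $\mathcal{B}_{t_{*} r} \cap \Omega$ and applying the scaled interpolation inequality on each yields a bound by $\eta [(t - s) r]^{2(m - k)} \int_{\mathcal{B}_{t_{*} r} \cap \Omega} \lvert \nabla^{m} u \rvert^{2} \, \d x + C_{\eta} [(t - s) r]^{-2k} \int_{\mathcal{B}_{t r} \cap \Omega} \lvert u \rvert^{2} \, \d x$; bounding the first term by the case $k = m$ already proved, applied to the pair $(t_{*} , t)$ — which contributes $[(t - t_{*}) r]^{-2m} = 2^{2m}[(t - s) r]^{-2m}$, and $[(t - s) r]^{2(m - k)} [(t - s) r]^{-2m} = [(t - s) r]^{-2k}$ — closes the proof of the lemma. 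The main obstacle in this scheme is the localized interpolation step near the bi-Lipschitz (possibly Neumann) boundary in case $(2)$: one cannot simply intersect small balls with $\Omega$ and hope for a Gagliardo--Nirenberg constant independent of the small radius, so the covering near $\partial\Omega$ must be built from chart-adapted half-cylinders (or from balls across a locally Dirichlet boundary), and one has to keep careful track of the $\rho$-scaling so that the coefficient of $\int \lvert\nabla^{m} u\rvert^{2}$ produced by interpolation stays uniformly small; once the scaled interpolation inequality is in hand, everything else is bookkeeping.
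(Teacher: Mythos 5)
Your argument is correct, but it is organized quite differently from the paper's proof, so let me compare. The paper proves a stronger intermediate claim (control of $\nabla^k u$ by \emph{all} derivatives of order $<k$) by a downward induction on $k$ starting from Lemma~\ref{Lem: Caccioppoli (1)}: it establishes the \emph{multiplicative} Gagliardo--Nirenberg inequality directly on the fixed-scale regions $\mathcal{B}_{\alpha r} \cap \Omega$, $\alpha \in [1/2,1]$ (rescaling to unit size and using zero extension across Dirichlet boundary resp.\ Rogers' extension for the rescaled chart pieces), inserts the induction hypothesis into the $\nabla^{k+1}u$-term, and then generates the correct powers of $(\tau - s)$ by choosing the Young parameter $\delta \asymp (\tau-s)^2$ before absorbing via Lemma~\ref{Lem: Giaquinta_Martinazzi lemma}; no covering at small scales is ever needed, which is exactly what the restriction $s \geq 1/2$ and the work at scale $\asymp r$ buy. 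You instead avoid the induction on $k$ altogether by using an additive ($\eta$-Ehrling) interpolation inequality \emph{localized at scale $(t-s)r$} through a bounded-overlap covering, so that the powers of $(t-s)r$ cancel losslessly, Lemma~\ref{Lem: Giaquinta_Martinazzi lemma} is invoked only once (for $k=m$), and the cases $k<m$ follow by a single interpolation plus the $k=m$ bound on the pair $(t_*,t)$; this is a legitimate and arguably more transparent scheme. The price, which you correctly identify as the crux, is the small-scale geometry: you must verify that at every point of the annulus close to $\partial\Omega$ and at every radius $\rho \asymp (t-s)r$ one of the three admissible region types is available with a uniform interpolation constant. This does go through under Assumption~\ref{Ass: Mixed boundary geometry}, because it is precisely the paper's own trichotomy (interior ball; chart piece $U^+_{y',\rho}$ when $\dist(y',\overline{\partial\Omega\setminus D}) \leq 1/(2M)$, using Remark~\ref{Rem: Mixed boundary geometry}; zero-extension across purely Dirichlet boundary otherwise, where no measure density of $\Omega$ in the ball is needed since one works with the extended function), the rescaled chart pieces are $(\eps,\delta)$-domains with parameters depending only on $d$ and $M$ so the Ehrling constant is uniform in $\rho$, and the scales occurring ($\lesssim (t-s)r \leq 1/4$, resp.\ $\leq 1/(2M)$) stay admissible; containment of the pieces in $\mathcal{B}_{\sigma_2 r}$, resp.\ $\mathcal{B}_{t_* r}$, follows from the distance estimates of Remark~\ref{Rem: Mixed boundary geometry}~(1)--(2) once the covering radius is a sufficiently small multiple of $(t-s)r$, and bounded overlap comes from a Vitali-type selection. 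If you write this up, the covering construction and the uniform small-scale interpolation on chart pieces and Dirichlet balls are the parts that must be carried out in detail; everything else is, as you say, bookkeeping.
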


\begin{proof}
We will prove the following claim by induction on $k$. Note that the initial step of this induction, i.e., $k = m$, is Lemma~\ref{Lem: Caccioppoli (1)} and that we will successively reduce the value of $k$. \\
\textbf{Claim:} There exists a constant $C > 0$ depending at most on $d$, $m$, $\kappa$, $N$, $M$, $\theta$, $\omega$, and $\Lambda$, such that for all $1 / 2 \leq s < t \leq 1$
\begin{align*}
 \int_{\mathcal{B}_{s r} \cap \Omega} \lvert \nabla^k u \rvert^2 \; \d x \leq C \sum_{l = 0}^{k - 1} [(t - s)r]^{-2 (k - l)} \int_{\mathcal{B}_{tr} \cap \Omega} \lvert\nabla^l u \rvert^2 \; \d x.
\end{align*}
\noindent First of all, we establish Gagliardo--Nirenberg's inequalities on the sets $\mathcal{B}_{\alpha r} \cap \Omega$ with $\alpha \in [1 / 2 , 1]$. As the constant of this inequality may depend on the size of the underlying sets, we rescale the whole situation. \par 
Rescale the function $u$ as $u_{\alpha r} : \frac{1}{\alpha r} (\mathcal{B}_{\alpha r} \cap \Omega) \to \IC^N , x \mapsto u(\alpha r x)$ and recall the cases presented in Lemma~\ref{Lem: Caccioppoli (1)}. Having a closer look onto $\frac{1}{\alpha r} (\mathcal{B}_{\alpha r} \cap \Omega),$ we see that in the first case this is just the ball $B([\alpha r]^{-1} x_0 , 1)$, which is a Sobolev extension domain of arbitrary order. In the third case, the set $\frac{1}{\alpha r} (\mathcal{B}_{\alpha r} \cap \Omega)$ is simply $B([\alpha r]^{-1} x_0 , 1) \cap \frac{1}{\alpha r} \Omega$. The radius $r$ is chosen such that $B(x_0 , r)$ only hits Dirichlet boundary so that $u_{\alpha r}$ can be identified with its extension by zero to $B( [\alpha r]^{-1} x_0 , 1)$. As above $B( [\alpha r]^{-1} x_0 , 1)$ is a Sobolev extension domain of all orders. Case $(2)$ is more interesting. Here, we have for some $x \in \overline{\partial \Omega \setminus D}$
\begin{align*}
 \frac{1}{\alpha r} (\mathcal{B}_{\alpha r} \cap \Omega) = \Phi_{x , \alpha r}^{-1} (Q( [\alpha r]^{-1} \Phi_x(x_0) , 1) \cap [\IR^{d - 1} \times (0 , \infty)]),
\end{align*}
where $\Phi_{x , \alpha r}^{-1}$ is given by $\frac{1}{\alpha r} \Phi_x^{-1} (\alpha r \cdot)$. Note that $\Phi_{x , \alpha r}^{-1}$ is a bi-Lipschitz homeomorphism, with bi-Lipschitz constant bounded by $M$ and that the bisected cube $Q( [\alpha r]^{-1} \Phi_x (x_0) , 1) \cap [\IR^{d - 1} \times (0 , \infty)]$ is an $(\eps , \delta)$-domain in the sense of Jones~\cite{Jones}, see Egert~\cite[Lem.~2.2.20]{Egert}. Moreover, a global bi-Lipschitz image of an $(\eps , \delta)$-domain is again an $(\eps , \delta)$-domain so that in our case, $\eps$ and $\delta$ only depend on $d$ and $M$. Finally, by Remark~\ref{Rem: Mixed boundary geometry}~(2)
\begin{align*}
 \diam( \Phi_{x , \alpha r}^{-1} (Q([\alpha r]^{-1} \Phi_x(x_0) , 1) \cap [\IR^{d - 1} \times (0 , \infty)])) \geq \frac{1}{M \sqrt{d}}
\end{align*}
so that the extension result of Rogers~\cite[Thm.~8]{Rogers} yields a Sobolev extension operator of arbitrary order with an operator norm depending only on $d$ and $M$. Extending in all three cases functions on $\frac{1}{\alpha r} (\mathcal{B}_{\alpha r} \cap \Omega)$ to $\IR^d$ and using the Gagliardo--Nirenberg inequalities on the whole space, see Nirenberg~\cite[p.~125]{Nirenberg}, proves that these inequalities hold true on $\frac{1}{\alpha r} (\mathcal{B}_{\alpha r} \cap \Omega)$ where the constant solely depends on $d$, $M$, and the Gagliardo--Nirenberg constants on $\IR^d$. Especially, we find for $\vartheta = \frac{1}{k + 1}$
\begin{align*}
 \| \nabla^k u_{\alpha r} \|_{\L^2(\frac{1}{\alpha r} (\mathcal{B}_{\alpha r} \cap \Omega))}^2 \leq C \|u_{\alpha r}\|_{\L^2(\frac{1}{\alpha r} (\mathcal{B}_{\alpha r} \cap \Omega))}^{2 \vartheta} \bigg[ \sum_{l = 0}^{k + 1} \int_{\frac{1}{\alpha r} (\mathcal{B}_{\alpha r} \cap \Omega)} \lvert \nabla^l u_{\alpha r} \rvert^2 \; \d x \bigg]^{1 - \vartheta}.
\end{align*}
By linear transformation, we find
\begin{align}
\label{Eq: Local Gagliardo-Nirenberg}
\begin{aligned}
 \| \nabla^k u \|_{\L^2(\mathcal{B}_{\alpha r} \cap \Omega)}^2 \leq C \big[ (\alpha r)^{-k} \|u\|_{\L^2(\mathcal{B}_{\alpha r} \cap \Omega)} \big]^{2 \vartheta} \bigg[ \sum_{l = 0}^{k + 1} (\alpha r)^{2(l - k)} \int_{\mathcal{B}_{\alpha r} \cap \Omega} \lvert \nabla^l u \rvert^2 \; \d x \bigg]^{1 - \vartheta}.
\end{aligned}
\end{align}
Next, let $1 / 2 \leq s < \tau \leq t \leq 1$ and apply~\eqref{Eq: Local Gagliardo-Nirenberg} on $\mathcal{B}_{s r} \cap \Omega$ as well as the induction hypothesis to the term involving $\nabla^{k + 1} u$, so that
\begin{align*}
 &\|\nabla^k u\|_{\L^2(\mathcal{B}_{s r} \cap \Omega)}^2 \\
 &\quad\leq C [ (s r)^{-k} \|u\|_{\L^2(\mathcal{B}_{s r} \cap \Omega)}]^{2 \vartheta} \bigg[ \sum_{l = 0}^k \big[ s^{2(l - k)} + s^2 (\tau - s)^{- 2(k + 1 - l)} \big] r^{2(l - k)} \int_{\mathcal{B}_{\tau r} \cap \Omega} \lvert \nabla^l u \rvert^2 \; \d x \bigg]^{1 - \vartheta}.
\intertext{For some $\delta > 0$, use Young's inequality $ab \leq \vartheta \delta^{(\vartheta - 1) / \vartheta} a^{\frac{1}{\vartheta}} + (1 - \vartheta) \delta b^{\frac{1}{1 - \vartheta}}$, to obtain}
 &\quad\leq \frac{C \delta^{\frac{\vartheta - 1}{\vartheta}}}{(s r)^{2k}} \| u \|_{\L^2(\mathcal{B}_{s r} \cap \Omega)}^2 + C (1 - \vartheta) \delta \sum_{l = 0}^k \big[ s^{2(l - k)} + s^2 (\tau - s)^{- 2(k + 1 - l)} \big] r^{2(l - k)} \int_{\mathcal{B}_{\tau r} \cap \Omega} \lvert \nabla^l u \rvert^2 \; \d x.
\end{align*}
Next, choose $\delta$ subject to the condition
\begin{align*}
 C \delta \frac{s^2}{(\tau - s)^2} = \frac{1}{8} \quad \Leftrightarrow \quad \delta = \frac{(\tau - s)^2}{8 C s^2}.
\end{align*}
Note that $\delta \leq 1 / (2 C)$, since $s \geq 1 / 2$ and $\tau - s \leq 1$. Thus,
\begin{align*}
 C \delta \big[ s^{2(l - k)} + s^2 (\tau - s)^{- 2(k + 1 - l)} \big] \leq  \frac{s^{2(l - k)}}{2} + \frac{(\tau - s)^{- 2(k - l)}}{8}
\end{align*}
and, by means of the choice $\vartheta = \frac{1}{k + 1}$,
\begin{align*}
 \frac{C \delta^{\frac{\vartheta - 1}{\vartheta}}}{s^{2 k}} = \frac{C}{s^{2 k} \delta^k} = \frac{8^k C^{1 + k}}{(\tau - s)^{2k}}.
\end{align*}
Returning to the estimate of $\nabla^k u$, estimate each $s$ from below by $\tau - s$ and use for all terms on the right-hand side of differentiability strictly less than $k$, that $\mathcal{B}_{s r} \cap \Omega$ and $\mathcal{B}_{\tau r} \cap \Omega$ are contained in $\mathcal{B}_{t r} \cap \Omega$. Put everything together to deduce
\begin{align*}
 \|\nabla^k u\|_{\L^2(\mathcal{B}_{s r} \cap \Omega)}^2 &\leq \Big[ 8^k C^{1 + k} + \frac{5 (1 - \vartheta)}{8} \Big] \frac{1}{[(\tau - s) r]^{2 k}} \| u \|_{\L^2(\mathcal{B}_{t r} \cap \Omega)}^2 \\
 &\qquad+ \frac{5 (1 - \vartheta)}{8} \sum_{l = 1}^{k - 1} \frac{1}{[(\tau - s) r]^{2 (k - l)}} \| \nabla^l u \|_{\L^2(\mathcal{B}_{t r} \cap \Omega)}^2 \\
 &\qquad+ \frac{5 (1 - \vartheta)}{8} \| \nabla^k u \|_{\L^2(\mathcal{B}_{\tau r} \cap \Omega)}^2.
\end{align*}
Now, we can appeal to Lemma~\ref{Lem: Giaquinta_Martinazzi lemma} by means of the following definitions. Let $s_0 := 1 / 2$, $t_0 := t$, $\alpha_l := 2(k - (l - 1))$,
\begin{align*}
 A_1 &:= \Big[ 8^k C^{1 + k} + \frac{5 (1 - \vartheta)}{8} \Big] \| u \|_{\L^2(\mathcal{B}_{t r} \cap \Omega)}^2, \\
 A_l &:= \frac{5 (1 - \vartheta)}{8} \| \nabla^{l - 1} u \|_{\L^2(\mathcal{B}_{t r} \cap \Omega)}^2 \qquad (2 \leq l \leq k),
\end{align*}
and
\begin{align*}
 \phi(s) := \| \nabla^k u \|_{\L^2(\mathcal{B}_{s r} \cap \Omega)}^2.
\end{align*}
It follows that there exists a constant $C > 0$ (different from the one above but independent of $s$, $\tau$, $r$, and $t$) such that for all $s \leq \tau \leq t$
\begin{align*}
 \| \nabla^k u \|_{\L^2(\mathcal{B}_{s r} \cap \Omega)}^2 \leq C \sum_{l = 0}^{k - 1} \frac{1}{[(\tau - s) r]^{2 (k - l)}} \| \nabla^l u \|_{\L^2(\mathcal{B}_{t r} \cap \Omega)}^2.
\end{align*}
In particular, this holds true for $\tau = t$, so that we conclude the induction step.
\end{proof}

The following lemma is a vector-valued and local version of the Sobolev embedding theorem, mentioned in the introduction.

\begin{lemma}
\label{Lem: Vector-valued Sobolev embedding}
Let $p \in [1 , \infty)$, $n_0 \in \IN$, and $(u_n)_{n = 1}^{n_0} \subset \W^{1 , p}_{\ID} (\Omega ; \IC^N)$. Let $x_0 \in \overline{\Omega}$ and $0 < r \leq 1 / (4 M \sqrt{d})$ be such that either $B(x_0 , r) \subset \Omega$ or $x_0 \in \partial \Omega$. If $q \in [1 , \infty)$ is such that $0 \leq \delta := \frac{1}{p} - \frac{1}{q} \leq \frac{1}{d}$, then there exists a constant $C > 0$ depending only on $p$, $q$, $d$, $N$, and $M$ such that
\begin{align*}
&\bigg( \frac{1}{r^d} \int_{B(x_0 , r) \cap \Omega} \Big[ \sum_{n = 1}^{n_0} \abs{u_n}^2 \Big]^{\frac{q}{2}} \; \d x \bigg)^{\frac{1}{q}} \\
 &\quad\leq C \bigg\{ r \bigg( \frac{1}{r^d} \int_{\mathcal{B} \cap \Omega} \Big[ \sum_{n = 1}^{n_0} \abs{\nabla u_n}^2 \Big]^{\frac{p}{2}} \; \d x \bigg)^{\frac{1}{p}} + \frac{1}{r^d} \int_{B(x_0 , r) \cap \Omega} \Big[ \sum_{n = 1}^{n_0} \abs{u_n}^2 \Big]^{\frac{1}{2}} \; \d x \bigg\}
\end{align*}
holds, where $\mathcal{B} = B(x_0 , M^2 \sqrt{d} r)$ if $x_0 \in \partial \Omega$ with $\dist(x_0 , \overline{\partial \Omega \setminus D}) \leq 1 / (2 M)$, and $\mathcal{B} = B(x_0 , r)$ else.
\end{lemma}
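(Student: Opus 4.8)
The plan is to reduce the $\IC^{n_0}$-valued inequality to the classical scalar Sobolev--Poincar\'e inequality by working with the single scalar function $v := \bigl( \sum_{n=1}^{n_0} \abs{u_n}^2 \bigr)^{1/2}$. First I would record the pointwise estimate $\abs{\nabla v} \leq \bigl( \sum_{n=1}^{n_0} \abs{\nabla u_n}^2 \bigr)^{1/2}$ almost everywhere, which holds on the set where $v > 0$ (and extends to all of $\Omega$ in the weak sense, since $\nabla v = 0$ a.e.\@ on $\{v = 0\}$); this is the standard diamagnetic-type inequality and is the only place the $\ell^2$-structure really enters. Note also that each $u_n \in \W^{1,p}_{D_i}(\Omega)$ componentwise, so $v \in \W^{1,p}(\Omega)$ with the stated gradient bound, and in the Dirichlet directions $v$ inherits the vanishing trace. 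Crucially, the constant $C$ then comes from a \emph{scalar} Sobolev embedding and is automatically independent of $n_0$, which is exactly the point flagged in the introduction.

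Next I would set up the rescaling exactly as in the proof of Lemma~\ref{Lem: Caccioppoli (2)}: pass to $v_r(x) := v(r x)$ (or $v(\alpha r x)$ with the appropriate $\alpha$) on the rescaled domain $\tfrac{1}{r}(\mathcal{B} \cap \Omega)$, so that all constants become scale-invariant. The three geometric cases are handled just as before: if $B(x_0,r) \subset \Omega$ the rescaled set is the unit ball; if $x_0 \in \partial\Omega$ with $\dist(x_0, \overline{\partial\Omega \setminus D}) > 1/(2M)$ the ball meets only the Dirichlet part, so $v_r$ extends by zero to the whole unit ball; and if $\dist(x_0, \overline{\partial\Omega\setminus D}) \leq 1/(2M)$ one uses the bi-Lipschitz chart $\Phi_x$ to see that $\tfrac1r(B(x_0,r)\cap\Omega)$ is contained in a bi-Lipschitz image of a bisected cube, which is an $(\eps,\delta)$-domain with $\eps,\delta$ depending only on $d$ and $M$, and whose diameter is bounded below by $1/(M\sqrt d)$; hence by Rogers~\cite[Thm.~8]{Rogers} there is a $\W^{1,p}$-extension operator with norm depending only on $d$, $M$. (The choice $r \leq 1/(4M\sqrt d)$ together with $Q(\Phi_x(x_0),M\sqrt d\, r) \subset (-1,1)^d$ and Remark~\ref{Rem: Mixed boundary geometry} is what forces the enlarged ball $\mathcal{B} = B(x_0, M^2\sqrt d\, r)$ to contain the preimage of that cube, so the gradient integral on the right indeed only sees $\mathcal{B} \cap \Omega$.)

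With the extension in hand I would apply the classical Gagliardo--Nirenberg--Sobolev inequality on $\IR^d$ (or on the unit ball) to the extended function, in the form: for $\tfrac1p - \tfrac1q = \delta \in [0, \tfrac1d]$,
\begin{align*}
 \| w - (w)_{U} \|_{\L^q(U)} \leq C \, \| \nabla w \|_{\L^p(U)},
\end{align*}
where $U$ is the (unit-scale) extension domain; then estimate the average term $\abs{(w)_U}$ by the $\L^1$-norm of $w$ on the original set, which after undoing the rescaling produces precisely the $\tfrac1{r^d}\int \abs{u_n\text{-block}}^{1/2}$ term on the right-hand side. Tracking the powers of $r$ through the linear change of variables (the gradient term picks up the factor $r$, the average term is scale-invariant in the normalized form written) gives the stated inequality. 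The case $\delta = 0$, i.e.\@ $q = p$, is trivial directly.

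The main obstacle, I expect, is purely bookkeeping rather than conceptual: getting the diamagnetic inequality $\abs{\nabla v} \leq (\sum_n \abs{\nabla u_n}^2)^{1/2}$ cleanly as a weak statement for $v \in \W^{1,p}$ (handling the set $\{v=0\}$ and the non-smoothness of $t \mapsto \sqrt t$ at the origin, via the usual regularization $\sqrt{v^2 + \eta^2} - \eta$ and $\eta \to 0$), and then chasing the scaling factors and the nested radii $r$, $M\sqrt d\, r$, $M^2\sqrt d\, r$ consistently so that the enlarged ball $\mathcal{B}$ in the gradient term is genuinely large enough to contain the bi-Lipschitz preimage of the relevant cube while still fitting inside the coordinate chart. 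The essential mathematical content — that the constant does not depend on $n_0$ — is free once the problem has been scalarized, which is why this lemma reduces to a known one-dimensional-in-$\IC^{n_0}$ phenomenon.
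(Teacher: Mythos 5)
Your proposal is correct in substance but takes a genuinely different route from the paper. You scalarize first: with $v := (\sum_{n}\abs{u_n}^2)^{1/2}$ and the chain-rule (diamagnetic-type) bound $\abs{\nabla v}\le(\sum_n\abs{\nabla u_n}^2)^{1/2}$, the $n_0$-independence of the constant is immediate, and you then apply the scalar Sobolev--Poincar\'e inequality on a rescaled extension domain (zero extension in the pure Dirichlet case, Rogers' operator on the rescaled bi-Lipschitz image of the bisected cube in the chart case, nothing for interior balls). The paper keeps the vector structure instead: it extends componentwise (via the reflection operator of Proposition~\ref{Prop: Local extension operators} in the chart case, by zero otherwise), uses the pointwise bound $\abs{\cE u_n(x)-(\cE u_n)_{B(x_0,r)\cap\Omega}}\lesssim\int_{B(x_0,r)}\abs{x-y}^{1-d}\abs{\nabla\cE u_n(y)}\,\d y$ from \cite[Lem.~7.16]{Gilbarg_Trudinger}, then Minkowski's integral inequality to pull the $\ell^2$-sum inside the Riesz potential, and finally the $\L^p\to\L^q$ boundedness of the Riesz potential (\cite[Lem.~7.12]{Gilbarg_Trudinger}, and \cite[Thm.~3.1.4]{Adams_Hedberg} at the endpoint $\delta=1/d$). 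Your route makes the dimension-free constant transparent and avoids any vector-valued potential estimate; the paper's route avoids the chain-rule lemma and, since the subtracted average is taken over the small set $B(x_0,r)\cap\Omega$ from the outset, it lands directly on the stated zeroth-order term.

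One point you should make explicit is where your average term ends up. If, in the chart case, you subtract $(w)_U$ over the extension domain and bound $\abs{(w)_U}$ by the $\L^1$-norm of $w$ there, you either obtain the $\L^1$-average over the enlarged set $\mathcal{B}\cap\Omega$ (a weaker inequality than stated, though still sufficient for Lemma~\ref{Lem: Iterated vector-valued Sobolev embedding} and Theorem~\ref{Thm: Weak reverse Hoelder estimates} after adjusting radii), or, if you route it through the boundedness of the extension operator, an $\L^p$-norm of $u$ instead of an $\L^1$-norm. To recover exactly the stated right-hand side you should subtract $(v)_{B(x_0,r)\cap\Omega}$ (or compare the two averages via Poincar\'e), which requires the measure lower bound $\abs{B(x_0,r)\cap\Omega}\ge\cC r^d$; this is available in the chart case from the bi-Lipschitz chart, i.e.~\eqref{Eq: (d - 1)-set estimate} together with Remark~\ref{Rem: Mixed boundary geometry}~(2), and it is precisely because this bound fails in general when $\dist(x_0,\overline{\partial\Omega\setminus D})>1/(2M)$ that one must there subtract the average over the full ball --- which your zero extension indeed permits.
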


\begin{proof}
If $x_0 \in \partial \Omega$, extend a function $u \in \W^{1 , 2}_{\ID}(\Omega ; \IC^N)$ from $\Omega \cap B(x_0 , r)$ to $B(x_0 , r)$ componentwise as follows: If $\dist (x_0 , \overline{\partial \Omega \setminus D}) \leq 1 / (2 M)$, then $B(x_0 , r) \subset U_{x_0 , M \sqrt{d} r}$ by Remark~\ref{Rem: Mixed boundary geometry}~(2), so define $(\cE u)_i := \cE_{x_0 , M \sqrt{d} r} u_i$ by using the local extension operator $\cE_{x_0 , M \sqrt{d} r}$ given by Proposition~\ref{Prop: Local extension operators}. For all other $x_0 \in \partial \Omega$, let $\cE u$ denote the extension by zero. \par
We present the most difficult case where $\dist (x_0 , \overline{\partial \Omega \setminus D}) \leq 1 / (2 M)$ and point out changes in the proof for the other cases afterwards. Note that as $U_{x_0 , r/(M \sqrt{d})}^+$ is the bi-Lipschitz image of a set with measure comparable to $r^d$, it holds
\begin{align}
\label{Eq: (d - 1)-set estimate}
 \lvert U_{x_0 , r/(M \sqrt{d})}^+ \rvert \geq \mathcal{C} r^d
\end{align}
with $\mathcal{C}$ depending only on $d$ and $M$. By Remark~\ref{Rem: Mixed boundary geometry}~(2), we conclude that $\abs{B(x_0 , r) \cap \Omega} \geq \mathcal{C} r^d$ holds true. Use the triangle inequality to conclude
\begingroup
\allowdisplaybreaks
\begin{align*}
 \bigg( \int_{B(x_0 , r) \cap \Omega} \Big[ \sum_{n = 1}^{n_0} \abs{u_n}^2 \Big]^{\frac{q}{2}} \; \d x \bigg)^{\frac{1}{q}} &\leq \bigg( \int_{B(x_0 , r)} \Big[ \sum_{n = 1}^{n_0} \abs{\cE u_n (x) - (\cE u_n)_{B(x_0 , r) \cap \Omega}}^2 \Big]^{\frac{q}{2}} \; \d x \bigg)^{\frac{1}{q}} \\
 &\qquad+ [\abs{B(0 , 1)} r^d]^{\frac{1}{q}} \Big[ \sum_{n = 1}^{n_0} \abs{(u_n)_{B(x_0 , r) \cap \Omega}}^2 \Big]^{\frac{1}{2}}. \intertext{Next, $\lvert \cE u_n (x)- (\cE u_n)_{B(x_0 , r) \cap \Omega} \rvert \leq 2^d / (d \cC) \int_{B(x_0 , r)} \abs{x - y}^{1 - d} \abs{\nabla \cE u_n (y)} \; \d y$ follows by a combination of Gilbarg and Trudinger~\cite[Lem.~7.16]{Gilbarg_Trudinger} and~\eqref{Eq: (d - 1)-set estimate}. Apply this to the first term and apply Minkowski's inequality as well as~\eqref{Eq: (d - 1)-set estimate} to the second term on the right-hand side, to obtain}
 &\leq \frac{2^d}{d \mathcal{C}} \Big\| \int_{B(x_0 , r)} \abs{\cdot - y}^{1 - d} \Big[ \sum_{n = 1}^{n_0} \abs{\nabla \cE u_n (y)}^2 \Big]^{\frac{1}{2}} \; \d y \Big\|_{\L^q(B(x_0 , r))} \\
 &\qquad+ \frac{\abs{B(0 , 1)}^{\frac{1}{q}} r^{\frac{d}{q} - d}}{\mathcal{C}} \int_{B(x_0 , r) \cap \Omega} \Big[ \sum_{n = 1}^{n_0} \abs{u_n}^2 \Big]^{\frac{1}{2}} \; \d x.
\intertext{For the first term on the right-hand side use the boundedness of the Riesz potential, see~\cite[Lem.~7.12]{Gilbarg_Trudinger} for $\delta < 1 / d$ and Adams and Hedberg~\cite[Thm.~3.1.4 (b)]{Adams_Hedberg} for $\delta = 1 / d$, to get}
 &\leq \frac{2^d}{d \mathcal{C}} B r^{1 - d \delta} \Big\| \Big[ \sum_{n = 1}^{n_0} \abs{\nabla \cE u_n}^2 \Big]^{\frac{1}{2}} \Big\|_{\L^p(B(x_0 , r))} \\
 &\qquad+ \frac{\abs{B(0 , 1)}^{\frac{1}{q}} r^{\frac{d}{q} - d}}{\mathcal{C}} \int_{B(x_0 , r) \cap \Omega} \Big[ \sum_{n = 1}^{n_0} \abs{u_n}^2 \Big]^{\frac{1}{2}} \; \d x,
\end{align*}
\endgroup
where $B$ depends only on $d$, $p$, and $q$. Note that Proposition~\ref{Prop: Local extension operators} gives
\begin{align*}
 \Big\| \Big[ \sum_{n = 1}^{n_0} \abs{\nabla \cE u_n}^2 \Big]^{\frac{1}{2}} \Big\|_{\L^p(B(x_0 , r) \setminus \Omega)} &= \Big\| \Big[ \sum_{n = 1}^{n_0} \abs{\nabla [u_n \circ \psi] }^2 \Big]^{\frac{1}{2}} \Big\|_{\L^p(B(x_0 , r) \setminus \Omega)} \\
 &\leq C \Big\| \Big[ \sum_{n = 1}^{n_0} \abs{\nabla u_n}^2 \Big]^{\frac{1}{2}} \Big\|_{\L^p(U_{x_0 , M \sqrt{d} r}^+)},
\end{align*}
where $C > 0$ depends only on $d$ and $M$. By Remark~\ref{Rem: Mixed boundary geometry}~(2), we conclude the proof for $x_0 \in \partial \Omega$ with $\dist (x_0 , \overline{\partial \Omega \setminus D}) \leq 1 / (2 M)$. \par
If $B(x_0 , r) \subset \Omega$, we do exactly the same without the extension operator. If $x_0 \in \partial \Omega$ with $\dist (x_0 , \overline{\partial \Omega \setminus D}) > 1 / (2 M)$, we proceed as above with the one exception that in the first inequality below~\eqref{Eq: (d - 1)-set estimate}, we introduce $(\cE u_n)_{B(x_0 , r)}$ instead of $(\cE u_n)_{B(x_0 , r) \cap \Omega}$. This has the effect of avoiding the need of an estimate of the form $\lvert B(x_0 , r) \cap \Omega \rvert \geq \cC r^d$, which is not available under the given geometric setup.
\end{proof}

The following lemma is an iterated version of the previous one.

\begin{lemma}
\label{Lem: Iterated vector-valued Sobolev embedding}
Let $p \in [1 , \infty)$, $n_0 \in \IN,$ and $(u_n)_{n = 1}^{n_0} \subset \W^{m , p}_{\ID} (\Omega ; \IC^N)$. Let $x_0 \in \overline{\Omega}$ and $0 < r \leq [M^2 \sqrt{d}]^{1 - m} / (4 M \sqrt{d})$ be such that either $B(x_0 , r) \subset \Omega$ or $x_0 \in \partial \Omega$. If $q \in [1 , \infty)$ satisfies $0 \leq \delta := \frac{1}{p} - \frac{1}{q} \leq \frac{m}{d}$, then there exists a constant $C > 0$ depending at most on $p$, $q$, $d$, $N$, $m$, and $M$ such that
\begin{align*}
\bigg( \frac{1}{r^d} \int_{B(x_0 , r) \cap \Omega} \Big[ \sum_{n = 1}^{n_0} \abs{u_n}^2 \Big]^{\frac{q}{2}} \; \d x \bigg)^{\frac{1}{q}} &\leq C \bigg\{ r^m \bigg( \frac{1}{r^d} \int_{\mathcal{B}_m \cap \Omega} \Big[ \sum_{n = 1}^{n_0} \abs{\nabla^m u_n}^2 \Big]^{\frac{p}{2}} \; \d x \bigg)^{\frac{1}{p}} \\
 &\qquad+ \sum_{k = 0}^{m - 1} \frac{1}{r^{d - k}} \int_{\mathcal{B}_k \cap \Omega} \Big[ \sum_{n = 1}^{n_0} \lvert \nabla^k u_n \rvert^2 \Big]^{\frac{1}{2}} \; \d x \bigg\}
\end{align*}
holds, where $\mathcal{B}_k := B(x_0 , [ M^2 \sqrt{d} ]^k r)$ if $x_0 \in \partial \Omega$ with $\dist(x_0 , \overline{\partial \Omega \setminus D}) \leq 1 / (2 M)$ and $\mathcal{B}_k := B(x_0 , r)$ else.
\end{lemma}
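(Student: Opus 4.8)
The plan is to prove the estimate by induction on $m$, the base case $m = 1$ being exactly Lemma~\ref{Lem: Vector-valued Sobolev embedding}. For the induction step I would split the admissible regularity budget as $\delta = \tfrac{\delta}{m} + \tfrac{m - 1}{m}\delta$ and introduce an intermediate exponent $p_1 \in [1 , \infty)$ via $\tfrac{1}{p_1} := \tfrac1q + \tfrac{\delta}{m}$; from $0 \le \delta \le \tfrac md$ one reads off at once that $p \le p_1 \le q$, that $\tfrac{1}{p_1} - \tfrac1q = \tfrac{\delta}{m} \le \tfrac1d$, and that $\tfrac1p - \tfrac{1}{p_1} = \tfrac{m - 1}{m}\delta \le \tfrac{m - 1}{d}$. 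The first move is to apply Lemma~\ref{Lem: Vector-valued Sobolev embedding} to the family $(u_n)_{n = 1}^{n_0}$ on the ball $B(x_0 , r)$, with the roles of $p$ and $q$ there played by $p_1$ and $q$; this peels off one derivative and bounds $\big( r^{-d} \int_{B(x_0 , r) \cap \Omega} [\sum_n \abs{u_n}^2]^{q / 2} \, \d x \big)^{1 / q}$ by a constant times the sum of $r^{-d} \int_{B(x_0 , r) \cap \Omega} [\sum_n \abs{u_n}^2]^{1 / 2} \, \d x$ — which is already the $k = 0$ term of the claim, since $\mathcal{B}_0 = B(x_0 , r)$ — and of $r \big( r^{-d} \int_{\widetilde{\mathcal{B}} \cap \Omega} [\sum_n \abs{\nabla u_n}^2]^{p_1 / 2} \, \d x \big)^{1 / p_1}$, where $\widetilde{\mathcal{B}} = B(x_0 , M^2 \sqrt d\, r)$ if $\dist(x_0 , \cl{\bd \Omega \setminus D}) \le 1 / (2 M)$ and $\widetilde{\mathcal{B}} = B(x_0 , r)$ otherwise. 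To that gradient term I would then apply the induction hypothesis to the family $\{ \partial^{\gamma} u_n : \abs{\gamma} = 1 , 1 \le n \le n_0 \}$ on the ball $\widetilde{\mathcal{B}}$, now with the roles of $p$ and $q$ played by $p$ and $p_1$; the radius restriction demanded by the $(m - 1)$-case is $\widetilde r \le [M^2 \sqrt d]^{2 - m} / (4 M \sqrt d)$, which in the dilated case is exactly the standing hypothesis $r \le [M^2 \sqrt d]^{1 - m} / (4 M \sqrt d)$. Substituting the second inequality into the first and reindexing finishes the step.

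Two preliminary remarks make both applications legitimate. First, differentiation maps $\W^{m , p}_{D_i}(\Omega)$ boundedly into $\W^{m - \abs{\gamma} , p}_{D_i}(\Omega)$ for $\abs{\gamma} \le m$ — a test function vanishing near $D_i$ has all its derivatives vanishing near $D_i$, and one passes to closures — so each family $\{ \partial^{\gamma} u_n : \abs{\gamma} = k \}$ lies in $\W^{m - k , p}_{\ID}(\Omega ; \IC^N) \subset \W^{1 , p}_{\ID}(\Omega ; \IC^N)$ whenever $k \le m - 1$, which is what the $(m - 1)$-case requires of the family $\{ \partial^{\gamma} u_n : \abs{\gamma} = 1 \}$. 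Second — and this is the place where the construction really exploits that $\Omega$ is admissible — the family $(u_n)_{n = 1}^{n_0}$ lies in $\W^{1 , p_1}_{\ID}(\Omega ; \IC^N)$, not merely in $\W^{1 , p}_{\ID}$: this follows by composing the bounded Sobolev extension operator for the mixed-boundary spaces $\W^{k , p}_{\ID}(\Omega)$ recalled in Section~\ref{Sec: Notations, assumptions and preliminary considerations} with the Sobolev embedding $\W^{m , p}(\IR^d) \hookrightarrow \W^{1 , p_1}(\IR^d)$, and the latter is available precisely because $p \le p_1$ and $\tfrac{1}{p_1} \ge \tfrac1p - \tfrac{m - 1}{d}$, both of which are consequences of $\delta \le \tfrac md$. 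Throughout I would freely pass between $\sum_{\abs{\gamma} = k} \abs{\partial^{\gamma} u_n}^2$ and $\abs{\nabla^k u_n}^2$, which are comparable with constants depending only on $d$ and $m$.

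The remainder is bookkeeping of scaling factors. Writing $\widetilde r = M^2 \sqrt d\, r$ (respectively $\widetilde r = r$), every power of $M^2 \sqrt d$ that surfaces has an exponent bounded in terms of $d$ and $m$ and is absorbed into the constant; the conversion between the normalizations $r^{-d}$ and $\widetilde r^{-d}$ costs only $(M^2 \sqrt d)^{d / p_1} \le (M^2 \sqrt d)^d$; the powers of $r$ recombine so that the outermost factor $r$ from the first application times $\widetilde r^{\,m - 1}$ from the second produces an overall $r^m$ on the top term, while $r$ times $\widetilde r^{\,k - d}$ produces $r^{(k + 1) - d}$ on the $(k + 1)$-st remainder term; and the balls match because $B(x_0 , [M^2 \sqrt d]^{\,m - 1} \widetilde r) = B(x_0 , [M^2 \sqrt d]^m r) = \mathcal{B}_m$ and $B(x_0 , [M^2 \sqrt d]^k \widetilde r) = B(x_0 , [M^2 \sqrt d]^{k + 1} r) = \mathcal{B}_{k + 1}$ in the dilated case (all balls equalling $B(x_0 , r)$ otherwise). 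After reindexing $k + 1 \mapsto k$, the sum produced by the induction hypothesis furnishes exactly the $k = 1 , \dots , m - 1$ terms of the target inequality, which together with the $k = 0$ term and the top term from the first application yields the claim for $m$. The resulting constant is built from the constants of Lemma~\ref{Lem: Vector-valued Sobolev embedding} and of the $(m - 1)$-case, hence depends only on $p$, $q$, $d$, $N$, $m$, and $M$.

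The step I expect to be the real obstacle — as distinct from routine verification — is the regularity upgrade $(u_n) \subset \W^{1 , p_1}_{\ID}(\Omega ; \IC^N)$ with $p_1 > p$: on a general admissible $\Omega$, which may be badly behaved away from $\cl{\bd \Omega \setminus D}$, this is false for arbitrary first-order Sobolev functions and has to be extracted from the extension theory for the spaces $\W^{k , p}_{\ID}(\Omega)$. Moreover it is exactly the hypothesis $\delta \le \tfrac md$ that simultaneously renders $p_1$ admissible for the whole-space Sobolev embedding, makes the split $\delta = \tfrac\delta m + \tfrac{m - 1}{m} \delta$ legal in the two applications, and — through the factor $M^2 \sqrt d$ picked up at each of the $m$ levels — forces the precise shape $r \le [M^2 \sqrt d]^{1 - m} / (4 M \sqrt d)$ of the radius restriction in the statement.
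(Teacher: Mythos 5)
Your proposal is correct and is essentially the paper's argument: the paper applies Lemma~\ref{Lem: Vector-valued Sobolev embedding} iteratively with intermediate exponents $p_k$ defined by $\tfrac{1}{p_{k-1}} - \tfrac{1}{p_k} = \tfrac{\delta}{m}$ (so $p_m = q$), tracking the dilated balls $\mathcal{B}_k$ exactly as in your induction on $m$, which is the same decomposition merely unrolled. Your additional verification that $u_n \in \W^{1 , p_1}_{\ID}(\Omega ; \IC^N)$ via the global extension operator for $\W^{k , p}_{D_i}(\Omega)$ and the whole-space Sobolev embedding is sound and in fact addresses a membership point that the paper's proof passes over in silence.
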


\begin{proof}
We will only consider the case where $x_0 \in \partial \Omega$ satisfies the inequality $\dist(x_0 , \overline{\partial \Omega \setminus D}) \leq 1 / (2 M)$. The other cases are proven in the same way, with the one exception, that the domain of integration stays the same in each iteration step. Define $p_1 \in [p , \infty)$ via the equation
\begin{align*}
 \frac{1}{p} - \frac{1}{p_1} = \frac{\delta}{m}.
\end{align*}
It follows that
\begin{align*}
 \frac{1}{p_1} = \frac{1}{p} - \frac{\delta}{m} \geq \frac{1}{p} - \frac{1}{p} + \frac{1}{q},
\end{align*}
so that in fact $p_1 \in [p , q]$. Inductively, define for $2 \leq k \leq m$ the number $p_k \in [p_{k - 1} , q]$ via
\begin{align*}
 \frac{1}{p_{k - 1}} - \frac{1}{p_k} = \frac{\delta}{m}.
\end{align*}
In particular, one finds $p_m = q$, so that Lemma~\ref{Lem: Vector-valued Sobolev embedding} can be applied iteratively. Thus,
\begin{align*}
 \bigg( \frac{1}{r^d} &\int_{B(x_0 , r) \cap \Omega} \Big[ \sum_{n = 1}^{n_0} \abs{u_n}^2 \Big]^{\frac{q}{2}} \; \d x \bigg)^{\frac{1}{q}} \\
 &\leq C \bigg\{ r \bigg( \frac{1}{r^d} \int_{\mathcal{B}_1 \cap \Omega} \Big[ \sum_{n = 1}^{n_0} \abs{\nabla u_n}^2 \Big]^{\frac{p_{m - 1}}{2}} \; \d x \bigg)^{\frac{1}{p_{m - 1}}} + \frac{1}{r^d} \int_{\mathcal{B}_0 \cap \Omega} \Big[ \sum_{n = 1}^{n_0} \abs{u_n}^2 \Big]^{\frac{1}{2}} \; \d x \bigg\} \\
 &\leq C \bigg\{ r^2 \bigg( \frac{1}{r^d} \int_{\mathcal{B}_2 \cap \Omega} \Big[ \sum_{n = 1}^{n_0} \lvert \nabla^2 u_n \rvert^2 \Big]^{\frac{p_{m - 2}}{2}} \; \d x \bigg)^{\frac{1}{p_{m - 2}}} \\
 &\qquad+ \frac{1}{r^{d - 1}} \int_{\mathcal{B}_1 \cap \Omega} \Big[ \sum_{n = 1}^{n_0} \abs{\nabla u_n}^2 \Big]^{\frac{1}{2}} \; \d x + \frac{1}{r^d} \int_{\mathcal{B}_0 \cap \Omega} \Big[ \sum_{n = 1}^{n_0} \abs{u_n}^2 \Big]^{\frac{1}{2}} \; \d x \bigg\}.
\intertext{Inductively, it follows}
 &\leq C \bigg\{ r^m \bigg( \frac{1}{r^d} \int_{\mathcal{B}_m \cap \Omega} \Big[ \sum_{n = 1}^{n_0} \abs{\nabla^m u_n}^2 \Big]^{\frac{p}{2}} \; \d x \bigg)^{\frac{1}{p}} + \sum_{k = 0}^{m - 1} \frac{1}{r^{d - k}} \int_{\mathcal{B}_k \cap \Omega} \Big[ \sum_{n = 1}^{n_0} \lvert \nabla^k u_n \rvert^2 \Big]^{\frac{1}{2}} \; \d x \bigg\}. \qedhere
\end{align*}

\end{proof}

Now we are in the position to prove the vector-valued weak reverse H\"older estimates.

\begin{theorem}
\label{Thm: Weak reverse Hoelder estimates}
Let $n_0 \in \IN$, $(\lambda_n)_{n = 1}^{n_0} \subset \Sec_{\pi - \theta}$, $x_0 \in \overline{\Omega}$, and $0 < r \leq 1 / (8 [M^2 \sqrt{d}]^{m + 1})$ be such that either $B(x_0 , 3 M [M^2 \sqrt{d}]^{m + 1} r) \subset \Omega$ or $x_0 \in \partial \Omega$. Moreover, let $(f_n)_{n = 1}^{n_0} \subset \L^2(\Omega ; \IC^N)$ such that for every $1 \leq n \leq n_0$ the function $f_n$ vanishes on $B(x_0 , 3 M [M^2 \sqrt{d}]^{m + 1} r) \cap \Omega$. If $2 m \geq d$, let $q$ be any number larger than $2$ and if $2 m < d$, let $q = \frac{2d}{d - 2m}$. Then there exists a constant $C > 0$, depending at most on $d$, $N$, $q$, $\kappa$, $m$, $M$, $\theta$, $\omega$, and $\Lambda$ such that
\begin{align*}
\bigg( \frac{1}{r^d} \int_{B(x_0 , r) \cap \Omega} \Big[ \sum_{n = 1}^{n_0} [ \abs{\lambda_n} \abs{u_n}]^2 \Big]^{\frac{q}{2}} \; \d x \bigg)^{\frac{1}{q}} \leq C \bigg( \frac{1}{r^d} \int_{B(x_0 , 2 [M^2 \sqrt{d}]^{m + 1} r) \cap \Omega} \sum_{n = 1}^{n_0} [ \abs{\lambda_n} \abs{u_n}]^2 \; \d x \bigg)^{\frac{1}{2}}
\end{align*}
holds.
\end{theorem}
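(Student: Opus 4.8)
The plan is to run the family $v_n := \lambda_n u_n$, $n = 1 , \dots , n_0$, where $u_n := (\lambda_n + A)^{-1} f_n$, through the iterated vector-valued Sobolev embedding of Lemma~\ref{Lem: Iterated vector-valued Sobolev embedding} and then to eliminate, order by order, the derivatives that appear on the resulting right-hand side by means of Caccioppoli's inequality, Lemma~\ref{Lem: Caccioppoli (2)}. Since $u_n \in \dom(A) \subset \W^{m , 2}_{\ID}(\Omega ; \IC^N)$, see Subsection~\ref{Subsec: The operator}, also $v_n \in \W^{m , 2}_{\ID}(\Omega ; \IC^N)$, and $\sum_n \abs{\nabla^k v_n}^2 = \sum_n [\abs{\lambda_n} \abs{\nabla^k u_n}]^2$ for all $k$. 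With $p = 2$ and the prescribed exponent $q$ one computes $\delta = \frac12 - \frac1q = \frac md$ when $2m < d$ (the endpoint case of Lemma~\ref{Lem: Vector-valued Sobolev embedding}) and $\delta < \frac md$ when $2m \geq d$, while the standing assumption $r \leq 1 / (8 [M^2 \sqrt{d}]^{m + 1})$ is stronger than the radius restriction of Lemma~\ref{Lem: Iterated vector-valued Sobolev embedding}. Applying that lemma to $(v_n)_{n = 1}^{n_0}$ therefore gives
\[
 \bigg( \frac{1}{r^d} \int_{B(x_0 , r) \cap \Omega} \Big[ \sum_{n = 1}^{n_0} \abs{v_n}^2 \Big]^{\frac q2} \, \d x \bigg)^{\frac 1q} \leq C \bigg\{ r^m \bigg( \frac{1}{r^d} \int_{\mathcal{B}_m \cap \Omega} \sum_{n = 1}^{n_0} \abs{\nabla^m v_n}^2 \, \d x \bigg)^{\frac12} + \sum_{k = 0}^{m - 1} \frac{1}{r^{d - k}} \int_{\mathcal{B}_k \cap \Omega} \Big[ \sum_{n = 1}^{n_0} \abs{\nabla^k v_n}^2 \Big]^{\frac12} \, \d x \bigg\},
\]
where $\mathcal{B}_k = B(x_0 , [M^2 \sqrt{d}]^k r)$ if $x_0 \in \partial \Omega$ with $\dist(x_0 , \overline{\partial \Omega \setminus D}) \leq 1/(2M)$ and $\mathcal{B}_k = B(x_0 , r)$ in the remaining cases.

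For the lower-order terms ($0 \leq k \leq m - 1$) I would next apply the Cauchy--Schwarz inequality in $x$, writing $\int_{\mathcal{B}_k \cap \Omega} [ \sum_n \abs{\nabla^k v_n}^2 ]^{1/2} \, \d x \leq \abs{\mathcal{B}_k}^{1/2} ( \int_{\mathcal{B}_k \cap \Omega} \sum_n \abs{\lambda_n}^2 \abs{\nabla^k u_n}^2 \, \d x )^{1/2}$ and using $\abs{\mathcal{B}_k}^{1/2} \leq C [M^2 \sqrt{d}]^{k d / 2} r^{d/2}$; the top-order term already has the form $\sum_n \abs{\lambda_n}^2 \int_{\mathcal{B}_m \cap \Omega} \abs{\nabla^m u_n}^2 \, \d x$. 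It then remains, for each fixed $0 \leq k \leq m$, to bound $\int_{\mathcal{B}_k \cap \Omega} \abs{\nabla^k u_n}^2 \, \d x$ by $C r^{-2k} \int_{B(x_0 , 2 [M^2 \sqrt{d}]^{m + 1} r) \cap \Omega} \abs{u_n}^2 \, \d x$; for $k = 0$ this is trivial, and for $k \geq 1$ one invokes Lemma~\ref{Lem: Caccioppoli (2)}, which is applicable since $f_n$ vanishes on $B(x_0 , 3 M [M^2 \sqrt{d}]^{m + 1} r) \cap \Omega$. In the interior case and in the boundary case $\dist(x_0 , \overline{\partial \Omega \setminus D}) > 1/(2M)$ one applies Lemma~\ref{Lem: Caccioppoli (2)} directly with base radius $2 [M^2 \sqrt{d}]^k r$ and $s = 1/2$, $t = 1$, noting that $2 [M^2 \sqrt{d}]^k r \leq 2 [M^2 \sqrt{d}]^{m + 1} r$ and that the denominator is controlled by $r^{-2k}$ because $[M^2 \sqrt{d}]^k \geq 1$. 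In the boundary case $\dist(x_0 , \overline{\partial \Omega \setminus D}) \leq 1/(2M)$ one first enlarges $B(x_0 , [M^2 \sqrt{d}]^k r)$ to the bi-Lipschitz cylinder $U_{x_0 , M \sqrt{d} [M^2 \sqrt{d}]^k r}$ by Remark~\ref{Rem: Mixed boundary geometry}~(2), applies Lemma~\ref{Lem: Caccioppoli (2)} with base radius $2 M \sqrt{d} [M^2 \sqrt{d}]^k r$ (which is at most $1/4$ by the smallness of $r$) and $s = 1/2$, $t = 1$, and then uses $U_{x_0 , 2 M \sqrt{d} [M^2 \sqrt{d}]^k r} \subset B(x_0 , 2 [M^2 \sqrt{d}]^{k + 1} r) \subset B(x_0 , 2 [M^2 \sqrt{d}]^{m + 1} r)$ to land back in the target ball.

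Substituting these bounds and keeping track of the powers of $r$ — the top-order term contributes $r^m \cdot ( r^{-d} r^{-2m} )^{1/2} = r^{-d/2}$ and each lower-order term $r^{-(d - k)} \cdot r^{d/2} \cdot r^{-k} = r^{-d/2}$ — every summand is dominated by $C [M^2 \sqrt{d}]^{k d / 2} ( \frac{1}{r^d} \int_{B(x_0 , 2 [M^2 \sqrt{d}]^{m + 1} r) \cap \Omega} \sum_n \abs{\lambda_n}^2 \abs{u_n}^2 \, \d x )^{1/2}$; summing over $0 \leq k \leq m - 1$ leaves a constant depending only on the admissible data. Since $\sum_n \abs{v_n}^2 = \sum_n [\abs{\lambda_n} \abs{u_n}]^2 = \sum_n \abs{\lambda_n}^2 \abs{u_n}^2$, rewriting both sides in these terms yields precisely the asserted inequality.

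The only genuine difficulty I anticipate is the geometric bookkeeping in the third of the cases above, i.e.\ matching the Euclidean balls produced by the Sobolev lemma with the bi-Lipschitz cylinders demanded by Caccioppoli's inequality near the Neumann part of the boundary, together with the verification that all radius thresholds of Remark~\ref{Rem: Mixed boundary geometry} and of Lemmas~\ref{Lem: Caccioppoli (1)}, \ref{Lem: Caccioppoli (2)}, and~\ref{Lem: Iterated vector-valued Sobolev embedding} are respected. The generous factor $3 M [M^2 \sqrt{d}]^{m + 1}$ in the vanishing hypothesis on $f_n$ and the smallness $r \leq 1 / (8 [M^2 \sqrt{d}]^{m + 1})$ are exactly what make all the required inclusions hold while using only a single application of Caccioppoli's inequality at each order $k$; the substantive analytic content — the $n_0$-uniform Sobolev embedding and Caccioppoli's estimate for the resolvent equation — has already been isolated in the preceding lemmas, so no new obstacle of that nature arises.
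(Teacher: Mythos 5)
Your proposal is correct and follows essentially the same route as the paper: apply the iterated vector-valued Sobolev embedding (Lemma~\ref{Lem: Iterated vector-valued Sobolev embedding}) with $p = 2$ to $(\lambda_n u_n)_n$, use H\"older/Cauchy--Schwarz on the lower-order terms, and then remove the derivatives with Caccioppoli's inequality (Lemma~\ref{Lem: Caccioppoli (2)}) with $s = 1/2$, $t = 1$, treating the interior/pure-Dirichlet cases directly and the Neumann-boundary case via the cylinders $U_{x_0, M\sqrt{d}[M^2\sqrt{d}]^k r}$ of Remark~\ref{Rem: Mixed boundary geometry}~(2). Your radius bookkeeping in the boundary case (base radius $2M\sqrt{d}[M^2\sqrt{d}]^k r \le 1/4$, landing in $B(x_0, 2[M^2\sqrt{d}]^{k+1}r)$) is exactly the intended argument and all threshold checks are in order.
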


\begin{proof}
Apply Lemma~\ref{Lem: Iterated vector-valued Sobolev embedding} with $q$ and $p = 2$ to the functions $(\lambda_n u_n)_{n = 1}^{n_0}$ to get
\begin{align*}
 \bigg( \frac{1}{r^d} \int_{B(x_0 , r) \cap \Omega} &\Big[ \sum_{n = 1}^{n_0} [ \abs{\lambda_n} \abs{u_n}]^2 \Big]^{\frac{q}{2}} \; \d x \bigg)^{\frac{1}{q}} \\
 &\leq C \bigg\{ r^m \bigg( \frac{1}{r^d} \int_{B(x_0 , [M^2 \sqrt{d}]^m r) \cap \Omega} \sum_{n = 1}^{n_0} [\abs{\lambda_n} \abs{\nabla^m u_n}]^2 \; \d x \bigg)^{\frac{1}{2}} \\
 &\qquad+ \sum_{k = 0}^{m - 1} \frac{1}{r^{d - k}} \int_{B(x_0 , [M^2 \sqrt{d}]^k r) \cap \Omega} \Big[ \sum_{n = 1}^{n_0} [\abs{\lambda_n} \lvert \nabla^k u_n \rvert]^2 \Big]^{\frac{1}{2}} \; \d x \bigg\}.
\intertext{Apply H\"older's inequality to the second term on the right-hand side, so that with a different constant $C$}
 &\leq C \sum_{k = 0}^m r^k \bigg( \frac{1}{r^d} \int_{B(x_0 , [M^2 \sqrt{d}]^k r) \cap \Omega} \sum_{n = 1}^{n_0} [\abs{\lambda_n} \lvert \nabla^k u_n \rvert]^2 \; \d x \bigg)^{\frac{1}{2}}.
\intertext{If $x_0$ is either inside $\Omega$ or on $\partial \Omega$ with $\dist(x_0 , \overline{\partial \Omega \setminus D}) > 1 / (2 M)$, then $2 [M^2 \sqrt{d}]^m r \leq 1 / (4 M^2 \sqrt{d})$, so that Lemma~\ref{Lem: Caccioppoli (2)} is directly applicable with $s = 1 / 2$, $t = 1$, and radius $2 [M^2 \sqrt{d}]^k r$. This yields}
&\leq C \bigg( \frac{1}{r^d} \int_{B(x_0 , 2 [M^2 \sqrt{d}]^m r) \cap \Omega} \sum_{n = 1}^{n_0} [\abs{\lambda_n} \abs{u_n}]^2 \; \d x \bigg)^{\frac{1}{2}}.
\end{align*}
In order to employ Caccioppoli's inequality in the case $\dist(x_0 , \overline{\partial \Omega \setminus D}) \leq 1 / (2M)$, apply Remark~\ref{Rem: Mixed boundary geometry}~(2), which in the current situation reads as
\begin{align*}
 B(x_0 , [M^2 \sqrt{d}]^k r) \subset U_{x_0 , M \sqrt{d} [M^2 \sqrt{d}]^k r} \subset B(x_0 , [M^2 \sqrt{d}]^{k + 1} r).
\end{align*}
Since $2 [M^2 \sqrt{d}]^{m + 1} r \leq 1 / 4$, Caccioppoli's inequality is applicable on the sets $\Omega \cap U_{x_0 , 2 [M^2 \sqrt{d}]^{k + 1} r}$ with $s = 1 / 2$ and $t = 1$. Proceeding as above concludes the proof.
\end{proof}

\subsection{Proof of Theorem~\ref{Thm: Maintheorem}}

By Remark~\ref{Rem: Reduction to p > 2} it suffices to concentrate on the case $p > 2$. Thus, in the case $2m < d$, define $p := 2d / (d - 2m)$, and in the case $2m \geq d$, let $p > 2$ be arbitrary. \par
As discussed in Section~\ref{Subsec: The operator}, the $\L^2$-realization of the elliptic operator $A$ is sectorial of angle $\omega \in [0 , \frac{\pi}{2})$. Thus, for every $\theta \in (\omega , \pi]$ the family $\{\lambda (\lambda + A)^{-1}\}_{\lambda \in \Sec_{\pi - \theta}}$ is bounded, which is equivalent to the boundedness of the family $\T$ given by
\begin{align*}
 \{ (\lambda_1 (\lambda_1 + A)^{-1} , \dots , \lambda_{n_0} (\lambda_{n_0} + A)^{-1} , 0 , \dots) : n_0 \in \IN , (\lambda_n)_{n = 1}^{n_0} \subset \Sec_{\pi - \theta} \}
\end{align*}
on $\L^2(\Omega ; \ell^2(\IC^N))$. With regard to Theorem~\ref{Thm: Extrapolation of square function estimates}, fix $X = Y = \ell^2(\IC^N)$ and invoke Remark~\ref{Rem: Extension of operator families} and Proposition~\ref{Prop: Reinterpretation of R-boundedness} to conclude that for each operator $T \in \T$ one has to verify the weak reverse H\"older estimates with uniform constants. Thus, we have to show that there exist constants $C > 0$, $R_0 > 0$, $\alpha_2 > \alpha_2 > 1$ such that for all $n_0 \in \IN$, $(\lambda_n)_{n = 1}^{n_0} \subset \Sec_{\pi - \theta}$, $(f_n)_{n = 1}^{n_0} \subset \L^{\infty} (\Omega ; \IC^N)$ with $f_n = 0$ on $B(x_0 , \alpha_2 r) \cap \Omega$, and $u_n := (\lambda_n + A)^{-1} f_n$ the estimate
\begin{align*}
\bigg( \frac{1}{r^d} \int_{B(x_0 , r) \cap \Omega} \Big[ \sum_{n = 1}^{n_0} [ \abs{\lambda_n} &\abs{u_n}]^2 \Big]^{\frac{p}{2}} \; \d x \bigg)^{\frac{1}{p}} \leq C \bigg( \frac{1}{r^d} \int_{B(x_0 , \alpha_1 r) \cap \Omega} \sum_{n = 1}^{n_0} [ \abs{\lambda_n} \abs{u_n}]^2 \; \d x \bigg)^{\frac{1}{2}}
\end{align*}
holds. This is exactly the statement of Theorem~\ref{Thm: Weak reverse Hoelder estimates}. Furthermore, Lemma~\ref{Lem: Reverse Hoelder for all balls} shows that one can take $\alpha_1 = 2$ and that the weak reverse H\"older estimates are valid for every $x_0$ satisfying $B(x_0 , r) \cap \Omega \neq \emptyset$. If $2m < d$, this allows us to invoke the self-improving property of weak reverse H\"older estimates, see~\cite[Thm.~6.38]{Giaquinta_Martinazzi}. Hence, there exists $\eps > 0$ such that
\begin{align*}
\bigg( \frac{1}{r^d} \int_{B(x_0 , r) \cap \Omega} \Big[ \sum_{n = 1}^{n_0} [ \abs{\lambda_n} &\abs{u_n}]^2 \Big]^{\frac{p + \eps}{2}} \; \d x \bigg)^{\frac{1}{p + \eps}} \leq C \bigg( \frac{1}{r^d} \int_{B(x_0 , 2 r) \cap \Omega} \Big[ \sum_{n = 1}^{n_0} [ \abs{\lambda_n} \abs{u_n}]^2 \Big]^{\frac{p}{2}} \; \d x \bigg)^{\frac{1}{p}}.
\end{align*}
We conclude that $\{ \lambda (\lambda + A_p)^{-1} \}_{\lambda \in \Sec{\pi - \theta}}$ is $\R$-bounded in $\Lop(\L^p(\Omega ; \IC^N))$. By~\cite[Prop.~2.1.1 h)]{Haase} we infer that $A_p$ is densely defined. \qed

%%%%%%%%%%%%%%%%%%%%%%%%%%%%%%%%%%%%%%%%%%%%%%%%%%%%%%%%%%%%%%%%%%%%%%%%%%%%%%%%%%%%%%%%%%%%%%%%%%%%%%%%%%%%%%%%%%%%%%%%%%%%%%%%%%%%%%%%%%%%%%%%%%%%%%%%%%%%%%%%%%%%
\begin{bibdiv}
\begin{biblist}

\bib{Adams_Hedberg}{book}{
      author={Adams, D.~R.},
      author={Hedberg, L.~I.},
       title={Function spaces and potential theory.},
   publisher={Springer,},
     address={Berlin},
        date={1996},
}

\bib{Auscher}{article}{
      author={Auscher, P.},
       title={On necessary and sufficient conditions for $\L^p$-estimates of {R}iesz transforms associated to elliptic operators on $\IR^n$ and related estimates.},
	date={2007},
     journal={Mem. Am. Math. Soc.},
	volume={186},
      number={871},
}

\bib{Auscher_Badr_Haller-Dintelmann_Rehberg}{article}{
      author={Auscher, P.},
      author={Badr, N.},
      author={Haller-Dintelmann, R.},
      author={Rehberg, J.},
       title={The square root problem for second-order, divergence form operators with mixed boundary conditions on $\L^p$.},
	date={2015},
     journal={J. Evol. Equ.},
      volume={15},
      number={1},
       pages={165\ndash 208},
}

\bib{Barton}{article}{
      author={Barton, A.},
       title={Gradient estimates and the fundamental solution for higher-order elliptic systems with rough coefficients.},
	date={2016},
	journal={Manuscripta Math.},
	volume={151},
	number={3-4},
	pages={375\ndash 418},
}

\bib{Brewster_Mitrea_Mitrea_Mitrea}{article}{
      author={Brewster, K.},
      author={Mitrea, D.},
      author={Mitrea, I.},
      author={Mitrea, M.},
       title={Extending {S}obolev functions with partially vanishing traces from locally $(\eps , \delta)$-domains and applications to mixed boundary problems.},
	date={2014},
     journal={J. Funct. Anal.},
      volume={266},
      number={7},
       pages={4314\ndash 4421},
}

\bib{Denk_Hieber_Pruess}{article}{
      author={Denk, R.},
      author={Hieber, M.},
      author={Pr\"uss, J.},
       title={$\R$-boundedness, Fourier multipliers and problems of elliptic and parabolic type.},
	date={2003},
     journal={Mem. Am. Math. Soc.},
      volume={166},
	number={788},
}

\bib{Denk_Saal_Seiler}{article}{
      author={Denk, R.},
      author={Saal, J.},
      author={Seiler, J.},
       title={Inhomogeneous symbols, the Newton polygon, and maximal $\L^p$-regularity.},
	date={2008},
     journal={Russ. J. Math. Phys.},
      volume={15},
      number={2},
       pages={171\ndash 191},
}

\bib{Egert_Systems}{article}{
	author={Egert, M.},
	title={Elliptic {S}ystems with mixed boundary conditions: $\H^{\infty}$ calculus and the $\L^p$ {K}ato {P}roblem.},
	note={In preparation.},
}

\bib{Egert}{book}{
	author={Egert, M.},
	title={On {K}ato's conjecture and mixed boundary conditions.},
	date={2016},
	publisher={Sierke Verlag},
	address={G\"ottingen},
}

\bib{Evans_Gariepy}{book}{
      author={Evans, L.~C.},
      author={Gariepy, R.~F.},
       title={Measure {T}heory and {F}ine {P}roperties of {F}unctions. {S}tudies in {A}dvanced {M}athematics.},
   publisher={CRC Press,},
     address={Bova Raton FL},
        date={1992},
}

\bib{Giaquinta_Martinazzi}{book}{
      author={Giaquinta, M.},
      author={Martinazzi, L.},
       title={An {I}ntroduction to the {R}egularity {T}heory for {E}lliptic {S}ystems, {H}armonic {M}aps and {M}inimal {G}raphs.},
      series={Appunti. Scuola Normale Superiore di Pisa (Nuova Serie) [Lecture Notes. Scuola Normale Superiore di Pisa (New Series)]},
   publisher={Edizioni della Normale,},
     address={Pisa},
      volume={11},
        date={2012},
}

\bib{Gilbarg_Trudinger}{book}{
      author={Gilbarg, D.},
      author={Trudinger, N.~S.},
       title={Elliptic {P}artial {D}ifferential {E}quations of {S}econd {O}rder.},
   publisher={Springer,},
     address={Berlin},
        date={1983},
}

\bib{Haase}{book}{
      author={Haase, M.},
       title={The functional calculus for sectorial operators.},
      series={Operator {T}heory: {A}dvances and {A}pplications},
   publisher={Birkh\"auser Verlag,},
     address={Basel},
      volume={169},
        date={2006},
}

\bib{Haller-Dintelmann_Jonsson_Knees_Rehberg}{article}{
      author={Haller-Dintelmann, R.},
      author={Jonsson, A.},
      author={Knees, D.},
      author={Rehberg, J.},
       title={On elliptic and parabolic regularity for mixed boundary value problems.},
	date={2012},
     eprint={WIAS-Preprint no.\@ 1706},
}

\bib{Haller-Dintelmann_Rehberg}{article}{
      author={Haller-Dintelmann, R.},
      author={Rehberg, J.},
       title={Maximal parabolic regularity for divergence operators including mixed boundary conditions.},
	date={2009},
     journal={J. Differential Equations},
      volume={247},
      number={5},
       pages={1354\ndash 1396},
}

\bib{Jones}{article}{
      author={Jones, P.~W.},
       title={Quasiconformal mappings and extendability of functions in Sobolev spaces.},
       date={1981},
   journal={Acta.\@ Math.,},
    volume={147},
     number={1-2},
    pages={71\ndash 88},
}

\bib{Kalton_Weis}{article}{
      author={Kalton, N.},
      author={Weis, L.},
       title={The $\H^{\infty}$-calculus and sums of closed operators.},
	date={2001},
     journal={Math. {A}nn.},
      volume={321},
       number={2},
       pages={319\ndash 345},
}

\bib{Kato}{book}{
      author={Kato, T.},
       title={Perturbation {T}heory for {L}inear {O}perators.},
   publisher={Springer,},
     address={Berlin},
	date={1995},
}

\bib{Kunstmann-Weis}{article}{
      author={Kunstmann, P.},
      author={Weis, L.},
       title={Maximal $\L_p$-regularity for {P}arabolic {E}quations, Fourier Multiplier Theorems and $\mathrm{H}^{\infty}$-functional Calculus},
	date={2004},
        book={
         title={Functional {A}nalytic {M}ethods for {E}volution {E}quations.},
      series={Lecture {N}otes in {M}athematics},
      editor={Iannelli, M.},
      editor={Nagel, R.},
      editor={Piazzera, S.},
      volume={1855},
   publisher={Springer Berlin Heidelberg},
	       },
       pages={65\ndash 311},
}

\bib{Nirenberg}{article}{
	author={Nirenberg, L.},
	title={On elliptic partial differential equations.},
	journal={Ann.\@ Scuola Norm.\@ Sup.\@ Pisa},
	date={1959},
	volume={13},
	number={3},
	pages={115\ndash 162},
}

\bib{Pruess}{article}{
      author={Pr\"uss, J.},
       title={Maximal regularity for evolution equations in $\L_p$-spaces.},
        date={2002},
     journal={Conf. Semin. Mat. Univ. Bari.},
      number={285},
       pages={1\ndash 39 (2003)},
}

\bib{Rogers}{article}{
      author={Rogers, L.~G.},
       title={Degree-independent Sobolev extension on locally uniform domains.},
        date={2006},
     journal={J. Funct. Anal.},
     volume={235},
      number={2},
       pages={619\ndash 665},
}

\bib{Schechter}{book}{
	author={Schechter, M.},
	title={Principles of functional analysis. Graduate Studies in Mathematics.},
	publisher={American Mathematical Society},
	address={Providence},
	date={2002},
	volume={36},
}

\bib{Shen-Riesz}{article}{
      author={Shen, Z.},
       title={Bounds of {R}iesz transforms on $\L^p$ spaces for second order elliptic operators.},
        date={2005},
     journal={Ann. {I}nst. {F}ourier ({G}renoble).},
      volume={555},
      number={1},
       pages={173\ndash 197},
}

\bib{Shen-Elliptic}{article}{
      author={Shen, Z.},
       title={Resolvent estimates in $\L^p$ for elliptic systems in {L}ipschitz domains.},
        date={1995},
     journal={J.~Funct.~Anal.},
      volume={133},
      number={1},
       pages={224\ndash 251},
}

\bib{Weis}{article}{
      author={Weis, L.},
       title={Operator-valued {F}ourier multiplier theorems and maximal $\L_p$-regularity.},
        date={2001},
     journal={Math. {A}nn.},
      volume={319},
      number={4},
       pages={735\ndash 758},
}

\end{biblist}
\end{bibdiv}

\end{document}